\documentclass[10pt,oneside]{amsart}

\usepackage{amsmath}
\usepackage{amssymb}
\usepackage{amsthm}
\usepackage{textcomp}
\usepackage{rotating}
\usepackage{tabularx}
\usepackage{longtable}
\usepackage{array}
\usepackage{mathtools}
\usepackage{mathrsfs}
\usepackage{array}
\usepackage{tikz-cd}
\usepackage{adjustbox}
\usepackage{caption}
\usepackage[all]{xy}
\usepackage{booktabs}

\newcommand{\C}{\mathbb{C}}

\newcommand{\Q}{\mathbb{Q}}

\newcommand{\Qbar}{\overline{\mathbb{\Q}}}
\renewcommand{\P}{\mathbb{P}}

\DeclareMathOperator{\Pic}{Pic}
\DeclareMathOperator{\NS}{NS}

\newcommand{\magicsquarethree}[9]{%
\renewcommand{\arraystretch}{1.5}
  \begin{array}{|c|c|c|}\hline
    #1 & #2 & #3 \\\hline
    #4 & #5 & #6 \\\hline
    #7 & #8 & #9 \\\hline
  \end{array}%
}
\makeatletter
\newcommand{\magicsquarefour}[9]{%
  \def\@msqArow{#1 & #2 & #3 & #4}%
  \def\@msqBrow{#5 & #6 & #7 & #8}%
  \@magicsquarefourB{#9}%
}
\newcommand{\@magicsquarefourB}[8]{%
  \renewcommand{\arraystretch}{1.5}%
  \begin{array}{|c|c|c|c|}\hline
    \@msqArow \\\hline
    \@msqBrow \\\hline
    #1 & #2 & #3 & #4 \\\hline
    #5 & #6 & #7 & #8 \\\hline
  \end{array}%
}
\makeatother

\newcommand{\et}{\text{\'et}}

\newtheorem{theorem}{Theorem}[section]
\newtheorem{prop}[theorem]{Proposition}
\newtheorem{lemma}[theorem]{Lemma}

\newtheorem{theoremintro}{Theorem}

\theoremstyle{definition}
\newtheorem{defi}[theorem]{Definition}

\newtheorem{remark}[theorem]{Remark}

\newcommand{\linedef}[1]{\textbf{#1}}

\date{September 2026}

\usepackage{url}

\usepackage{hyperref}
\hypersetup{
    pdftitle={The Algebraic Geometry of 3 x 3 Magic Squares of Squares},
    pdfauthor={Asher Auel and Benjamin Singer},
    colorlinks=true,
    citecolor=blue,
    anchorcolor=blue,
    linkcolor=blue,
    urlcolor=blue
}

\begin{document}

\title[$3 \times 3$ magic squares of squares]{the algebraic geometry of $3 \times 3$ magic \\squares of squares}

\author{Asher Auel}
\address{Department of Mathematics, 
Dartmouth College, Kemeny Hall, Hanover, New Hampshire\\\texttt{\it E-mail
address: \tt asher.auel@dartmouth.edu}}

\author{Benjamin Singer}
\address{Department of Mathematics, 
Dartmouth College, Kemeny Hall, Hanover, New Hampshire\\\texttt{\it E-mail
address: \tt benjamin.d.singer.27@dartmouth.edu}}

\begin{abstract}
The question of whether a $3 \times 3$ magic square of squares with distinct integer entries exists has been open since the 18th century.  We study the geometry of the algebraic surface $V$ parameterizing $3 \times 3$ magic squares of squares, which is a singular complete intersection of six quadrics in $\mathbb{P}^8$. We compute its geometric automorphism group via an argument involving Gale duality. We compute the basic topological invariants and Hodge diamond of its resolution.  We provide an explicit rank 518 sublattice of its geometric Picard group, close to the Hodge-theoretic upper bound of 544. Finally, we study the geometry and arithmetic of del Pezzo, K3, and Enriques surfaces that arise as coordinate projections.
\end{abstract}

\maketitle


\section*{Introduction and History}

\subsection*{The History of Magic Squares}
An $n \times n$ magic square is a square with $n$ rows and $n$ columns with distinct positive integer entries such that each row, column, and diagonal all add up to the same number, often called the magic sum. Since a $2 \times 2$ magic square is impossible to construct, as the entries must all be equal, the smallest possible magic square is $3 \times 3$. The first instance of this was the Luo Shu magic square discovered in China nearly 3000 years ago. Chinese mythology tells of a turtle appearing out of the Luo river after a long famine, with markings on its shell in the following ordering \cite{cammann_1960_the}:
\[
\magicsquarethree{2}{7}{6}{9}{5}{1}{4}{3}{8}
\]
The rows, columns, and diagonals add to the magic sum 15, the smallest possible.

From a theoretical perspective, finding magic squares can be reduced to linear algebra. An $n \times n$ square with rational entries is determined by $2n+2$ linear equations in $n^2$ variables over $\mathbb{Q}$, giving infinitely many non-trivial integral solutions for all $n \geq 3$ by clearing denominators. For small $n$, the space can be explicitly parametrized. For example, in 1891, Lucas showed that every $3 \times 3$ magic square can be written as
\begin{equation}
    \label{eq:Lucas_param}
    \magicsquarethree{z-y}{x+y+z}{z-x}{y-x + z}{z}{x-y + z}{x + z}{z-x-y}{y+z}
\end{equation}
for some $x, y, z \in \mathbb{Z}$. In general, counting integral $n \times n$ magic squares is related to Ehrhart theory \cite{stanley1976magic,stanley1973linear}. See \cite{boyer_multimagiecom} as well as Section~\ref{subsec:automorphismgroup} for further details.

\subsection*{Magic Squares of Squares}

It has long been a fascination to construct magic squares under additional constraints. The most immediate of these is whether or not it is possible to construct a ``magic square of $k$th powers", an $n \times n$ magic square whose entries are entirely $k$th powers of integers for a given integer $k$. In the case $k = 2$, we will call an $n \times n$ magic square of $k$th powers a \linedef{magic square of squares} (or \linedef{square of squares}, for short). In 1770, Euler produced the $4 \times 4$ square of squares
\[
\magicsquarefour{68^2}{29^2}{41^2}{37^2}{17^2}{31^2}{79^2}{32^2}{59^2}{28^2}{23^2}{61^2}{11^2}{77^2}{8^2}{49^2}
\]
with magic sum 8515 in a letter to Lagrange \cite{boyer_2005_some}. Recently, Rome and Yamagishi~\cite{rome2025existence} showed that for each $k \geq 2$, there is an integer $n_0(k)$ such that $n \times n$ magic square of $k$th powers exists for any $n \geq n_0(k)$; in particular, an $n \times n$ magic square of squares exists for all $n \geq 4$. However, it is still unknown whether a $3 \times 3$ magic square of squares exists.

Many mathematicians have worked on $3 \times 3$ magic squares of squares in the last few centuries. Most notably, Lucas~\cite{lucas_1891_sur} proved that a $3 \times 3$ \textit{bimagic} square, i.e., a magic square that remains magic when all of its entries are squared, is impossible. A restatement of the $3 \times 3$ magic square of squares problem is due to LaBar~\cite{labar_1984_problem} in 1984, and has been maintained in the mathematical consciousness by many since: Gardner's column \cite{gardner1996magic}, Boyer's mathematical intelligencer article \cite{boyer_2005_some} and his upkeep of a website tracking progress on the problem \cite{boyer_multimagiecom}, and most recently by V\'arilly-Alvarado in a lecture at the Joint Mathematics Meetings in 2020 with a subsequent article \cite{vrillyalvarado_2021_the}.

While it is still unknown whether or not a $3 \times 3$ magic square of squares exists over the rational numbers, magic squares of squares of algebraic integers have been considered. The number fields of smallest degree where there exist known magic squares of squares are biquadratic extensions of $\mathbb{Q}$. For example, the square
\[
\magicsquarethree{(5-13\sqrt{3})^2}{(17 + 9\sqrt{3})^2}{(22 - 4\sqrt{3})^2}{(23 - \sqrt{3})^2}{(2\sqrt{133})^2}{(23 + \sqrt{3})^2}{(22 + 4\sqrt{3})^2}{(17 - 9\sqrt{3})^2}{(5 + 13\sqrt{3})^2}
\]
with magic sum 1596 over the biquadratic field $\mathbb{Q}(\sqrt{3}, \sqrt{133})$ is due to Bremner~\cite{bremner_1999_on} and is a magic square of squares over the totally real field of smallest degree known. For a positive integer $q$, there is also a general family 
\[
\magicsquarethree{(q^2 + 1)^2}{-(q^2 + 2q - 1)^2}{(2\sqrt{q^3 - q})^2}{-(q^2 - 2q - 1)^2}{0}{(q^2 - 2q - 1)^2}{-(2\sqrt{q^3 - q})^2}{(q^2 + 2q - 1)^2}{-(q^2 + 1)^2}
\]
over the field $\mathbb{Q}(i, \sqrt{q^3 - q})$ given in \cite{bremner_1999_on}. Of these, the magic square of squares over the number field of smallest discriminant known is
\[
\magicsquarethree{25}{-49}{24}{-1}{0}{1}{-24}{49}{-25}
\]
over $\mathbb{Q}(i, \sqrt{6})$, a member of the above family with $q = 2$. Similar families are given in Section~\ref{subsec:coordinate_hyperplane_sections}. Bremner~\cite{bremner_1999_on} also gave methods to construct magic squares of squares over number fields of odd degree. 

Progress on magic square of powers problems is tracked at \cite{boyer_multimagiecom}, including prizes up to \texteuro1500 and a bottle of champagne for solving problems related to $3 \times 3$ squares of squares. Gardner also placed a \$100 bounty on the problem \cite{gardner1996magic}. Further details can be found in \cite{boyer_2005_some}.

\subsection*{Results and Structure of the Paper}

This paper explores a perspective on the $3 \times 3$ magic square of squares problem through arithmetic geometry. The algebraic variety associated to $3 \times 3$ magic squares of squares is a complete intersection surface over $\Q$ with 256 $A_1$ singularities defined over $\Q(\zeta_8)$.  We will call this variety $V$, its resolution $\widetilde{V}$, and its complex analytification $\widetilde{V}^{an}$.  The $\Q$-rational points of $V$ yield integer $3 \times 3$ magic squares of squares by clearing denominators, but may have entries that are zero or not distinct.  The resolution $\widetilde{V}$ is of general type, so the sparsity of $3 \times 3$ magic squares of squares is consistent with the Bombieri--Lang conjecture on rational points of general type varieties (see Part F of~\cite{hindry2013diophantine} for an introduction). The $3 \times 3$ case is the only case where this happens; the variety associated to $4 \times 4$ magic squares of squares is a Calabi-Yau sevenfold and the varieties associated to $n \times n$ squares of squares for $n \geq 5$ are Fano, so they should contain many rational points. This perspective has appeared previously due to V\'arilly-Alvarado \cite{vrillyalvarado_2021_the}. Furthermore, Bruin, Thomas, and V\'arilly-Alvarado~\cite{bruin_2022_explicit} have shown $\widetilde{V}$ to be \textit{algebraically quasi-hyperbolic}, i.e., it contains finitely many curves of genus~0 and~1. Since these are the only curves that could have infinitely many rational points by Faltings' theorem, it confirms that rational points on $V$ are relatively sparse. Furthermore, in Appendix~\ref{subsec:lines-conics}, we prove that $V$ contains no lines over $\Qbar$. 

Arithmetic geometry also provides a potential solution to the $3 \times 3$ magic square of squares problem through the \'etale Brauer--Manin obstruction to rational points. Indeed, to find a magic square of squares with distinct entries is to find a rational point on an open subvariety $U \subset \widetilde{V}$. Computing the Brauer group of $\widetilde{V}$ and then the \'etale Brauer--Manin set of $U$ could provide an obstruction, resolving the problem (see \cite{viray2023rational,jeanlouiscolliotthlne_2021_the} for an overview of this approach). However, accomplishing this requires understanding the geometry, topology, and Hodge theory of $\widetilde{V}^{an}$, the geometric Picard group and its structure as a Galois module, and the geometry of the complement of $U$. Our work provides the first steps toward realizing this approach. 

The variety of magic squares has a large symmetry group, which we compute in the following.

\begin{theoremintro}[Geometric Automorphism Group]
\label{thm:main3}
The geometric automorphism group $\mathrm{Aut}(V_{\overline{\mathbb{Q}}}) \cong \mathrm{Aut}(\widetilde{V}_{\overline{\mathbb{Q}}})$ is isomorphic to the finite group $G \cong D_4 \ltimes (\mu_2^9)/\mu_2$ of order $2048$. 
Its action on $V$ is by coordinate sign changes in the ambient $\mathbb{P}^8$ and symmetries of the square, and its action on $\widetilde{V}$ is obtained by lifting to the minimal resolution.
\end{theoremintro}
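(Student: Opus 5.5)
We realize $V$ as follows. Let $L \subset \P^8$ be the plane $\P^2$ of $3 \times 3$ magic squares, cut out by the linear conditions; Lucas' parametrization \eqref{eq:Lucas_param} identifies it with $\P^2_{x,y,z}$. Then $V = \{ [u_1:\dots:u_9] : [u_1^2:\dots:u_9^2] \in L \}$. The squaring map $V \to L$ is the quotient by the diagonal sign group $\mu_2^9/\mu_2$. So $V$ is the complete intersection of the six quadrics $\sum_j a_{ij}u_j^2 = 0$, where the $6 \times 9$ matrix $(a_{ij})$ spans the annihilator of $L$.

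The first inclusion, $G \subseteq \mathrm{Aut}(V_{\Qbar})$, is direct. Coordinate sign changes preserve every quadric. The dihedral group $D_4$ of symmetries of the square permutes rows, columns and diagonals, so it preserves $L$ and hence $V$. The two groups intersect trivially, and $D_4$ normalizes the sign group, which gives a subgroup of order $8 \cdot 2^8 = 2048$.

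The isomorphism $\mathrm{Aut}(V_{\Qbar}) \cong \mathrm{Aut}(\widetilde V_{\Qbar})$ comes from two facts. $\widetilde V$ is the minimal resolution of the $256$ $A_1$ points, and it is of general type. An automorphism of $V$ permutes the singular points, so it lifts to the minimal resolution. Conversely, an automorphism of $\widetilde V$ preserves the canonical model, and for a surface of general type with only $A_1$ singularities whose $K$ is ample that model is $V$ itself; it also preserves the set of $(-2)$-curves contracted. I would check that $K_V = \mathcal O_V(6\cdot 2 - 9) = \mathcal O_V(3)$ is ample, and that $H^0(\mathcal O_V(1)) = H^0(\mathcal O_{\P^8}(1))$ because complete intersections are projectively normal. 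Together these give $\mathrm{Aut}(V) \subseteq \mathrm{PGL}_9$: an automorphism preserves $K_V = \mathcal O(3)$. Since $\Pic V$ is torsion-free enough, or by taking a cube root via the linear normality of $\mathcal O(1)$, it preserves $\mathcal O(1)$ itself. Hence it is induced by a linear map $A$ of $\P^8$ preserving the $5$-dimensional web of quadrics $W = \langle q_1,\dots,q_6\rangle$ that cuts out $V$.

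The main step, and the main obstacle, is to show that every linear $A$ preserving $W$ lies in $G$. First, $A$ must permute the singular members of $W$ and the singular locus. The plan is to show that the diagonal torus structure is intrinsic: all quadrics in $W$ are simultaneously diagonal in the coordinates $u_j$. The $256$ singular points of $V$ are the points where some three coordinates vanish appropriately, namely the preimages of points of $L$ lying on the coordinate hyperplanes in special position. Their configuration determines the $9$ coordinate hyperplanes $u_j = 0$. Concretely, the hyperplane sections $V \cap \{u_j=0\}$ should be distinguished, for example as the ramification curves of the squaring quotient, or as the unique hyperplanes containing a prescribed set of singular points. This forces $A$ to be monomial: $A = P \cdot D$ with $P$ a permutation and $D$ diagonal.

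Now $A$ preserves $W$ if and only if $P$ preserves the subspace $\mathrm{Ann}(L)$ after $D^2$ rescales coordinates. This is where Gale duality enters. The Gale dual of the $9$ points (columns of $a_{ij}$) is the configuration of $9$ points in $L^\vee \cong \P^2$ given by the coordinate functionals restricted to $L$, i.e. the nine linear forms of \eqref{eq:Lucas_param}. A monomial map preserves $\mathrm{Ann}(L)$ exactly when the permutation together with the rescaling $D^2$ is a projective automorphism of this $9$-point configuration in $\P^2$. I would compute the linear automorphisms of the configuration of the nine lines $z\pm x$, $z\pm y$, $z\pm(x+y)$, $z\pm(x-y)$, $z$. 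These form the magic-square incidence structure: 8 lines of three, plus the centre. The linear automorphisms are exactly $D_4$, acting with trivial rescaling up to a scalar. Hence $D^2$ is scalar, so $D$ is a sign change up to scale, and $A \in G$. The hard part I expect is rigorously showing that the coordinate hyperplanes are intrinsic, i.e. excluding non-monomial automorphisms. I would attempt it via the singular locus: the $256$ nodes are orbit points of the sign group, and I would verify by computation that their linear span structure determines the coordinate frame.
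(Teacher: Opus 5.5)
There is a genuine gap exactly where you expect one. Your reduction to linear maps $A$ preserving the space $W$ of defining quadrics is fine. However, you never prove that every $A$ preserving $W$ is monomial, and the routes you sketch do not establish it.

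\begin{itemize}
\item Characterizing $V \cap \{u_j = 0\}$ as the ramification curves of the squaring map $V \to L$ is circular. That map is the quotient by the sign group $S$, and $A$ respects it only if $A$ normalizes $S$, which is part of what you are trying to prove.
\item The route through the nodes is not an argument yet. Each of the $256$ nodes has exactly three vanishing coordinates and lies over one of the $8$ triple points of the nine-line arrangement in $L$. To single out the coordinate hyperplanes you would have to show that no other hyperplane contains as many nodes as a coordinate hyperplane does, and you leave this as an unperformed computation.
\end{itemize}

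Without monomiality, your Gale-duality step has nothing to act on.

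The missing idea, which the paper uses, is the discriminant of $W$. Every $q \in W$ is diagonal with entries $c_v(q)$, so $\det q = \prod_v c_v(q)$. Hence the singular members of $W$ form the nine hyperplanes $\{c_v = 0\} \subset \P(W)$. Since the functionals $c_v$ are pairwise non-proportional, a general $q$ with $c_v(q)=0$ has kernel exactly $\C e_v$. The action $q \mapsto A^{\mathsf T} q A$ preserves rank, so it permutes these nine hyperplanes. Moreover $\ker(A^{\mathsf T} q A) = A^{-1} \ker q$. Hence $A^{-1}$ carries each coordinate line $\C e_v$ to some $\C e_{v'}$, i.e.\ $A$ is monomial.

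The paper packages this differently. It shows that the kernel of $\mathrm{Stab}(W) \to \mathrm{PGL}(W)$ is $S$, via commutation with the diagonal matrices $q^{-1}q'$. It then shows that the image permutes the nine points $[c_v] \in \P(W^\vee)$, whose Gale transform is the $3 \times 3$ grid.

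Once you have monomiality, the rest of your argument is correct and finishes the proof, somewhat more directly than the paper's appeal to Gale duality. Namely, a monomial $PD$ preserves $W$ exactly when $P$ and $D^2$ induce a linear automorphism of $L$ that permutes the nine Lucas forms $z, z\pm x, z\pm y, z\pm(x\pm y)$ up to the scalars in $D^2$. Rigidity of the grid then forces $P \in D_4$ and $D^2$ scalar.

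A smaller point: passing from $\mathcal{O}(3)$ to $\mathcal{O}(1)$ needs the torsion-freeness of the Picard group (Proposition~\ref{prop:pic-ns}). ``Taking a cube root via linear normality'' is not an argument by itself.
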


We prove Theorem~\ref{thm:main3} in Section \ref{subsec:automorphismgroup}. 
The automorphism group clearly contains the group $G$, which fits into a split exact sequence
\[
1 \longrightarrow \mu_2^9/\mu_2 \longrightarrow G \longrightarrow D_4 \longrightarrow 1.
\]
The subgroup of sign-changes acts trivially on the entries of the associated magic square (obtained by squaring the coordinates of points of $V$), and the dihedral group $D_4$ of order 8 acts non-trivially on the magic square.

The content of the theorem lies in showing the reverse inclusion $\mathrm{Aut}(V_{\overline{\mathbb{Q}}}) \leq G$. We show that every automorphism of $V$ is induced by an element of $\mathrm{PGL}_9$ preserving the six-dimensional space $W$ of defining quadrics, and hence projectively permutes the nine coefficient functionals in $\mathbb{P}(W^{\vee})$. The space of linear relations among these functionals recovers the three-dimensional space of $3 \times 3$ magic squares parametrized by Lucas, whose Gale transform is a $3 \times 3$ grid in $\mathbb{P}^2$. The rigidity of the line arrangements within the grid forces the image of $\mathrm{Aut}(V_{\overline{\mathbb{Q}}})$ in $\mathrm{PGL}(W)$ to be $D_4$. A separate argument identifies the kernel with the group of sign changes.

We also compute the main topological and Hodge-theoretic information about the (resolution of the) variety of squares of squares.

\begin{theoremintro}[Hodge Numbers, Betti Numbers, and Chern Numbers]\label{thm:main1}
The complex manifold $\widetilde{V}^{an}$ is simply connected and has Hodge diamond
\[
\begin{array}{ccccc}
 &  & 1 &  & \\
 & 0 &  & 0 & \\
111 &  & 544 &  & 111\\
 & 0 &  & 0 & \\
 &  & 1 &  &
\end{array}
\]
This gives Betti numbers $b_0 = b_4 = 1$, $b_1 = b_3 = 0$, $b_2 = 766$. Furthermore, the singular cohomology of $\widetilde{V}^{\mathrm{an}}$ is torsion-free, and the Chern numbers are $c_1^2(\widetilde{V}^{\mathrm{an}}) = 576$ and $c_2(\widetilde{V}^{\mathrm{an}}) = 768$. The lattice $H^2(\widetilde{V}^{\mathrm{an}}, \mathbb{Z})$ with the intersection pairing is odd and unimodular of signature $(223, 543)$, hence isomorphic to $\langle 1 \rangle^{\oplus 223} \oplus \langle -1 \rangle^{\oplus 543}$.
\end{theoremintro}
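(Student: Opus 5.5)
We will treat $V$ as a complete intersection of six quadrics in $\P^8$. It has only $256$ ordinary double points, so $\widetilde{V}$ is obtained by blowing up these nodes, each exceptional curve being a $(-2)$-curve. The plan is to compute everything first for a smooth complete intersection $V'$ of the same multidegree, and then transfer the answers to $\widetilde{V}$.

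\textbf{Step 1: invariants of a smooth complete intersection.} By adjunction, $K_{V'} = \mathcal{O}(-9+6\cdot 2) = \mathcal{O}(3)$. Hence $c_1^2 = 9\deg V' = 9\cdot 2^6 = 576$. Since the resolution of $A_1$ singularities is crepant, $K_{\widetilde V}$ is the pullback of $\mathcal{O}(3)$, so $c_1^2(\widetilde V)=576$ as well.

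For $c_2$ we expand $c(T_{V'}) = (1+h)^9/(1+2h)^6$. This gives $c_2 = (36 - 108 + 84)h^2 = 12h^2$, so $c_2(V') = 12\cdot 64 = 768$.

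\textbf{Step 2: invariants of $\widetilde V$.} Replacing a node by a $(-2)$-curve leaves the Euler characteristic unchanged. Indeed, the Milnor fibre of an $A_1$ singularity has $\chi = 2$, and the smoothing of a node loses this Milnor fibre but gains a point; the minimal resolution has the same Euler characteristic as the smoothing because $\widetilde V$ and $V'$ are diffeomorphic, being related by a simultaneous resolution in the Brieskorn sense. Either route gives $e(\widetilde V) = 768$.

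Noether's formula then gives $\chi(\mathcal{O}) = (576+768)/12 = 112$.

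For $q$, we use that a smooth complete intersection has $h^1(\mathcal{O}) = 0$ by Lefschetz, and that rational singularities preserve $h^{i}(\mathcal O)$. So $q = 0$, $p_g = 111$, and $b_2 = 768-2 = 766$. The remaining Hodge number is $h^{1,1} = 766 - 222 = 544$.

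The signature follows from the Hodge index theorem: $b^+ = 2p_g+1 = 223$ and $b^- = 543$. Equivalently, the signature is $(c_1^2-2c_2)/3 = (576-1536)/3 = -320$.

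\textbf{Step 3: simple connectivity and torsion-freeness.} We use Brieskorn--Tyurina simultaneous resolution: after a finite base change of a smoothing family of $V$, there is a family whose general fibre is $V'$ and whose special fibre is $\widetilde V$. All fibres are smooth, so by Ehresmann $\widetilde V^{an}$ is diffeomorphic to $V'$.

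The Lefschetz hyperplane theorem for complete intersections gives $\pi_1(V') = 1$. Since the surface is simply connected, $H_1 = 0$, and universal coefficients plus Poincaré duality make all the integral cohomology torsion-free. Poincaré duality also gives unimodularity of the intersection form.

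\textbf{Step 4: the lattice.} The form is odd because $w_2 \equiv K = 3h \equiv h \pmod 2$ and $h^2 = 64$ is even. So we cannot use $K^2$ directly. Instead, the exceptional $(-2)$-curves satisfy $E\cdot K = 0$ but do not decide parity either. The cleanest argument is that the form is even if and only if $w_2=0$, and $w_2$ is the reduction of $h$. Since $h$ is primitive in $H^2(V',\mathbb{Z})$ by Lefschetz, its reduction is nonzero, so $w_2 \neq 0$ and the form is odd. This primitivity claim needs checking, and the argument has to be run on $V'$, where Lefschetz applies.

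An indefinite odd unimodular lattice is diagonal by the Hasse--Minkowski/Serre classification, which gives $\langle 1\rangle^{223}\oplus\langle -1\rangle^{543}$.

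\textbf{Main obstacle.} The main obstacle is justifying the diffeomorphism $\widetilde V \cong V'$ via simultaneous resolution. This needs $V$ to have only $A_1$ singularities and to admit a smoothing within complete intersections, which is true since a generic complete intersection of the same type is smooth. The parity statement also relies on the primitivity of $h$ and needs a careful check.
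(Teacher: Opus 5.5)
Your proposal is correct and follows essentially the paper's route. You smooth $V$ inside a family of $(2,2,2,2,2,2)$ complete intersections, use Atiyah/Brieskorn simultaneous resolution to get $\widetilde V^{an}\cong V'$, and deduce oddness from $w_2\equiv h \pmod 2$, the signature from the Hodge index theorem, and diagonality from Serre's classification. The differences are only computational: you expand $(1+h)^9/(1+2h)^6$ and use Noether's formula, where the paper cites a Chern-number formula and Hirzebruch's generating function. Your base-changed Brieskorn version also does not need the smooth total space that the paper's smoothing lemma arranges for Atiyah's theorem.

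The primitivity of $h$ that you flag is exactly what the paper checks: $(\P^8,V')$ is $2$-connected, so $H^2(\P^8,\mathbb{Z}/2)\hookrightarrow H^2(V',\mathbb{Z}/2)$. Your first Euler-characteristic sentence is garbled, since resolving and smoothing a node each raise $e$ by one relative to $V$, but the conclusion $e(\widetilde V)=768$ is right.
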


We prove Theorem~\ref{thm:main1} by showing that $\widetilde{V}^{an}$ is diffeomorphic to a smooth complete intersection of type $(2, 2, 2, 2, 2, 2)$ in $\mathbb{P}^8$ via a result of Atiyah \cite{atiyah_1958_on}; the invariants follow quickly from there.

We also compute bounds on the Picard group of the (resolution of the) variety of magic squares of squares.

\begin{theoremintro}[Picard Rank]
\label{thm:picard-rank}
   The resolution $\widetilde{V}$ has torsion-free geometric Picard group, with geometric Picard rank $518 \leq \rho(\widetilde{V}_{\overline{\mathbb{Q}}}) \leq 544$.
\end{theoremintro}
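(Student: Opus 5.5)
The plan has three parts, and each uses Theorem~\ref{thm:main1}.

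\textbf{Torsion-freeness.} By Theorem~\ref{thm:main1}, $\widetilde{V}^{an}$ is simply connected, so $H^1(\widetilde{V}^{an},\mathcal{O})=0$ and $\mathrm{Pic}^0$ vanishes. The exponential sequence then embeds $\Pic(\widetilde{V}_{\C})$ into $H^2(\widetilde{V}^{an},\mathbb{Z})$, which is torsion-free by Theorem~\ref{thm:main1}. Since $\Pic$ of a smooth projective variety does not change under the extension $\Qbar \subset \C$, the geometric Picard group $\Pic(\widetilde{V}_{\Qbar})$ is torsion-free.

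\textbf{Upper bound.} The Lefschetz $(1,1)$ theorem identifies $\NS(\widetilde{V}_{\C})\otimes\mathbb{R}$ with a subspace of $H^{1,1}$. Hence $\rho \le h^{1,1}=544$ by the Hodge diamond of Theorem~\ref{thm:main1}.

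\textbf{Lower bound.} This is the real work. I would exhibit explicit curves on $\widetilde{V}_{\Qbar}$, compute their intersection matrix, and check that it has rank $518$. The candidate curves are:
\begin{itemize}
\item the $256$ exceptional $(-2)$-curves over the $A_1$ singularities;
\item the hyperplane class $H$;
\item components of the coordinate hyperplane sections $\{x_i=0\}\cap V$. Setting a coordinate to zero makes that entry of the square vanish, and the section should split into several curves over $\Qbar$ (these are presumably the sections studied in Section~\ref{subsec:coordinate_hyperplane_sections});
\item components of the hyperplane sections cut out by linear relations $x_i=\pm\zeta x_j$, for $\zeta$ an eighth root of unity. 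These reflect coincidences $x_i^2=\pm x_j^2$ of entries, which are forced to split because the quadrics involve differences of squares.
\end{itemize}
I would take the $G$-orbits of these components under the group of Theorem~\ref{thm:main3}. This gives a large supply of classes, and it also gives the Galois action needed later.

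To compute intersection numbers, I would use three ingredients:
\begin{itemize}
\item the degree of each curve against $H$;
\item the self-intersection of a smooth curve $C$ from adjunction. Since $K_{\widetilde V}$ is the pullback of $\mathcal{O}(3)$ (because $-9+6\cdot 2=3$, and the singularities are $A_1$), we get $C^2=2g-2-3\deg C$;
\item mutual intersections computed in explicit coordinates over $\Q(\zeta_8)$ by Gröbner bases, keeping track of which curves pass through which nodes.
\end{itemize}
The rank of the resulting integer Gram matrix is then computed exactly.

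The main obstacle I expect is reaching $518$. This depends on finding enough independent split curves beyond the obvious $257$ classes. It also requires the intersection numbers at the nodes to be correct, including the multiplicities of strict transforms along the exceptional curves. I would first try the orbits of hyperplane-section components, enlarging the list with tangent or other special sections until the rank stabilizes.
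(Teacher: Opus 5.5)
Your torsion-freeness and upper-bound arguments are the paper's: the exponential sequence with $h^{0,1}=0$ and torsion-free $H^2(\widetilde{V}^{an},\mathbb{Z})$, and then Lefschetz $(1,1)$ with $h^{1,1}=544$. One correction: $\Pic$ of a smooth projective variety does change under $\Qbar\subset\C$ in general, through $\Pic^0$. What you need is only that $\Pic(\widetilde{V}_{\Qbar})\to\Pic(\widetilde{V}_{\C})$ is injective, or that $\Pic^0=0$ here. Both hold.

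The genuine gap is the lower bound, which is the real content of the statement. What you describe is a search procedure (``enlarge the list until the rank stabilizes''), not an argument that rank $518$ is reached. A proof needs an explicit list of curves whose Gram matrix has rank $518$.

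Your proposed families are not shown to give this, and the paper indicates they do not. Coordinate sections together with $x_i=\pm x_j$ and $x_i=\pm i x_j$ are essentially the paper's $Z'$, $Z$, $Z_{A,C}$ and $Z_{B,C}$, and the paper finds this hyperplane-section method saturates. For instance, the sections $A=\pm\sqrt{d}M$ with $d>3$ no longer split and contribute only $H$.

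There are two further problems with the list as stated:
\begin{itemize}
\item Working over $\mathbb{Q}(\zeta_8)$, you would not see the components of $A=\pm iC$ and $B=\pm iC$, since their splitting needs $\sqrt3$ (on $B=\pm iC$ one has $M=\pm A/\sqrt3$).
\item The sections $A=\pm\sqrt{-2}M$ and $B=\pm\sqrt{-2}M$ are not of your form $x_i=\pm\zeta x_j$ with $\zeta$ a root of unity.
\end{itemize}

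The missing idea is where the remaining classes come from. Take the coordinate projections $V\to\mathbb{P}^4$ onto the magic quartic del Pezzo surfaces, and their further projections from $[1:1:1:1:1]$ to cubic surfaces. Pulling back the lines on these surfaces gives curves cut out by hyperplanes mixing three or more coordinates, with coefficients in $\mathbb{Q}(i,\sqrt2,\sqrt3,\sqrt5)$; for example $\sqrt2A\pm\sqrt2B\pm\sqrt3D=0$ on the surface of orbit $3$.

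The paper calls the $384$ del Pezzo line preimages essential, and uses $148$ cubic-surface line preimages alongside them. Only together with the $416$ split hyperplane components do these non-exceptional classes contribute rank $262$ beyond the $256$ exceptional curves. Until you specify such a family and verify the rank of its intersection matrix, $\rho(\widetilde{V}_{\Qbar})\geq 518$ is not established by your plan.
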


In Sections \ref{sec:divisors} and \ref{sec:magic-surfaces}, we produce 1204 divisors on $\widetilde{V}_{\Qbar}$ including the 256 exceptional divisors, 416 split hyperplane sections over several number fields, and $384$ and $148$ preimages of lines on quartic del Pezzo surfaces and cubic surfaces (respectively) that admit a dominant morphism from $V$. In particular, we explicitly describe the complement of $U$ in $\widetilde{V}$. Since $\widetilde{V}$ is simply connected by Theorem \ref{thm:main1}, it admits no nontrivial connected finite \'etale covers. Thus any \'etale input for a potential \'etale Brauer--Manin obstruction on $U$ must come from covers ramified along its complement.

In Section~\ref{sec:PicardLattice}, we give an account of intersection theory on $V$, using the data accumulated from the previous sections to produce a Galois-stable rank 518 sublattice of the Picard group. The upper bound is yielded by Theorem~\ref{thm:main1} and the Lefschetz theorem on $(1, 1)$-classes, and torsion-freeness is yielded by a standard argument with the exponential exact sequence. In the process of searching for divisors, we find a new connection between $V$ and the congruent number problem, as the elliptic curve $y^2 = x^3 - x$ appears in the decomposition of one of these hyperplane sections. A different connection was previously found by Robertson~\cite{robertson_1996_magic}. Furthermore, we give new parametrizations of $3 \times 3$ magic squares of squares over biquadratic fields, producing a square over $\mathbb{Q}(i, \sqrt{5})$, a square over a field of smaller discriminant than Bremner's example over $\mathbb{Q}(i, \sqrt6)$. 

Finally, a subproblem of finding a $3 \times 3$ magic square of squares is to find ${3 \times 3}$ magic squares with a certain number of square entries. In \cite{bremnersquaresii}, Bremner recasts this problem geometrically, showing that squares of 6 squares are governed by degree 8 K3 surfaces, and he extensively studies one of these K3 surfaces. We extend Bremner's perspective in three directions in Section \ref{sec:magic-surfaces}. Firstly, we explore projections of $V$ to quartic del Pezzo surfaces, which parametrize squares of 5 squares. Secondly, we show that one of the K3 surfaces has geometric Picard rank 19 and write down explicit equations for the others. Finally, we use the explicit geometry of the K3 surface that Bremner studies to produce other related surfaces: a family of twisted derived equivalent K3 surfaces of degree 12, and a family of Enriques surfaces that it covers.

\subsection*{Acknowledgments}

The authors would like to thank Nick Addington, Phil Engel, Sarah Frei, Salim Tayou, Tony V\'arilly-Alvarado, and John Voight for helpful comments. The second author would especially like to thank John Voight, whose talk on the problem at a meeting of the Dartmouth Mathematical Society in 2024 inspired this project. This paper is also heavily inspired by the work of van Luijk and of Stoll and Testa on the perfect cuboid problem \cite{van2000perfect,stoll2010surface}. In fact, our Magma \cite{bosma1997magma} scripts to compute ranks of Picard sublattices later in the paper are modified versions of Stoll's code to compute intersection matrices of blown-up surfaces.

The first author was supported by NSF grant DMS-2200845.  The second author was supported by a leave term grant from the office of Scholars Programs, Undergraduate Research, and Fellowships at Dartmouth College, part-time research grants from the James O.\ Freedman Presidential Scholars program, and through the Jack Byrne Scholars Program in Math and Society.  The second author presented preliminary versions of some of these results at a poster session at the AGNES conference at Dartmouth College in November 2024, and during a talk at the Young Mathematicians Conference at the Ohio State University in July 2025.

\subsection*{Statement on AI Usage} The second author used Claude Opus 5, courtesy of Dartmouth College, in the editing process to check for typos, notational inconsistencies, and errors, as well as to clean up code to make it more presentable. All mathematics and writing within this paper are entirely the work of the authors.

\section{The Variety Parameterizing {$3 \times 3$} Magic Squares of Squares}

\subsection{Equations and Basic Details} We will write a $3 \times 3$ magic square of squares as the following:
\[
\magicsquarethree{A^2}{B^2}{C^2}{D^2}{M^2}{E^2}{F^2}{G^2}{H^2}
\]
Denote the special central entry as $M^2$. Writing the magic sum as $N$ yields
\begin{align*}
3N &{} = F^2 + M^2 + C^2 + H^2 + M^2 + A^2 + G^2 + M^2 + B^2  \\
   &{} = 3M^2 + (A^2 + B^2 + C^2) + (F^2 + G^2 + H^2)  \\
   &{} = 3M^2 + 2N,
\end{align*}
which implies that $N = 3M^2$. Thus, we can define $V$ $\subset \mathbb{P}^8$ as the variety cut out by the equations
\begin{align*}
    A^2 + H^2 - 2M^2 = &{\,} 0, &{} \quad A^2 + B^2 + C^2 - 3M^2 = &{\,} 0, \\
    B^2 + G^2 - 2M^2 = &{\,} 0, &{} \quad A^2 + D^2 + F^2 - 3M^2 = &{\,} 0, \\
     C^2 + F^2 - 2M^2 = &{\,} 0, &{} \quad C^2 + E^2 + H^2 - 3M^2 = &{\,} 0, \\
    D^2 + E^2 - 2M^2 = &{\,} 0, &{} \quad F^2 + G^2 + H^2 - 3M^2 = &{\,} 0.   
\end{align*}
However, for our purposes, it is convenient to work with the Gr{\"o}bner basis of the defining ideal 
\begin{align*}
    3A^2 - 2F^2 - 2G^2 + H^2 = &{\, } 0, &{} \quad 3D^2 + 2F^2 -G^2 - 4H^2 = &{\,} 0,\\
    3B^2 - 2F^2 + G^2 - 2H^2 = &{\, }0, &{} \quad  3E^2 - 4F^2 - G^2 + 2H^2 = &{\,} 0,\\
    3C^2 + F^2 - 2G^2 - 2H^2 = &{\,} 0, &{} \quad 3M^2 - F^2 - G^2 - H^2 = &{\,} 0.
\end{align*}
This gives us a morphism
\begin{align*}
\pi : V &{} \longrightarrow \mathbb{P}^2\\
[A:B:C:D:M:E:F:G:H] &{} \longmapsto [F:G:H]
\end{align*}
of degree 64 with fibers producing the square
\[
\magicsquarethree{\frac{2}{3}F^2 + \frac{2}{3}G^2 - \frac{1}{3}H^2}{\frac{2}{3}F^2 - \frac{1}{3}G^2 + \frac{2}{3}H^2}{-\frac{1}{3}F^2 + \frac{2}{3}G^2 + \frac{2}{3}H^2}{-\frac{2}{3}F^2 + \frac{1}{3}G^2 + \frac{4}{3}H^2}{\frac{1}{3}F^2 + \frac{1}{3}G^2 + \frac{1}{3}H^2}{\frac{4}{3}F^2 + \frac{1}{3}G^2 - \frac{2}{3}H^2}{F^2}{G^2}{H^2}
\]
The exact same process but with any other outer row or column will yield isomorphic descriptions of $V$ as an intersection of $6$ quadrics in $\mathbb{P}^8$.

From the Gr{\"o}bner basis, $V$ is two-dimensional, and its codimension is equal to the number of equations which determine it. This implies that it is a complete intersection of type $(2, 2, 2, 2, 2, 2)$ in $\mathbb{P}^8$, and a surface of degree 64.  $V$ has 256 ordinary double point singularities all defined over the number field $\mathbb{Q}(\sqrt{2}, i)$ \cite{bruin_2022_explicit} explicitly described in \ref{subsec:trivialpoints}, so it is arithmetically Cohen--Macaulay and thus (projectively) normal. Furthermore, since positive-dimensional complete intersections are connected, $V$ is irreducible. This implies the following:

\begin{lemma}
\label{lemma:gentype}
The resolution $\widetilde{V}$ is of general type.
\end{lemma}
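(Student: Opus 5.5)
We compute the canonical class by adjunction and then use that the singularities are rational double points.

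First, on $V$ itself: $V$ is a complete intersection of six quadrics in $\P^8$, hence Gorenstein. Adjunction gives the dualizing sheaf
\[
\omega_V \cong \mathcal{O}_V\Bigl(\textstyle\sum_{i=1}^6 2 - 9\Bigr) = \mathcal{O}_V(3).
\]
Since $V$ is a surface of degree $64$, we get $K_V^2 = 9\cdot 64 = 576$. Moreover $K_V$ is ample, being a positive multiple of the hyperplane class. We shall also use that $V$ is normal and irreducible, as established just above.

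Second, we pass to the minimal resolution $\sigma\colon \widetilde{V} \to V$. The $256$ singularities are ordinary double points, i.e.\ $A_1$ singularities. These are rational double points, so the resolution is crepant: $K_{\widetilde{V}} = \sigma^*K_V$. This can be checked locally. The blow-up of an $A_1$ point has exceptional curve $E \cong \P^1$ with $E^2=-2$. Writing $K_{\widetilde{V}} = \sigma^*K_V + aE$, adjunction gives $-2 = 2g(E)-2 = E^2 + K_{\widetilde V}\cdot E = -2 - 2a$, so $a = 0$. Therefore $K_{\widetilde{V}}$ is the pullback of an ample divisor. It is nef and big, with $K_{\widetilde{V}}^2 = 576 > 0$, and it is zero exactly on the $256$ exceptional $(-2)$-curves.

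Third, we conclude that $\widetilde V$ has Kodaira dimension $2$. Since $\sigma^*$ of a very ample divisor is base-point free, the pluricanonical maps $\phi_{|mK_{\widetilde V}|}$ factor as $\sigma$ followed by the embedding given by $|\mathcal{O}_V(3m)|$. We need $H^0(\widetilde V, mK_{\widetilde V}) = H^0(V, \mathcal{O}_V(3m))$, which follows from the projection formula together with $\sigma_*\mathcal{O}_{\widetilde V} = \mathcal{O}_V$ (normality). Hence $h^0(mK_{\widetilde V})$ grows like $\tfrac{576}{2}m^2$, so $\kappa(\widetilde V) = 2$ and $\widetilde V$ is of general type. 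In fact $\widetilde V$ is minimal and $V$ is its canonical model.

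The only delicate point is the crepancy of the resolution. This is handled by the local computation above, or equally by citing that ADE singularities are canonical. Everything else is adjunction for complete intersections.
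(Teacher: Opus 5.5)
Your proof is correct and follows essentially the same route as the paper: adjunction gives $\omega_V \cong \mathcal{O}_V(3)$, the $A_1$ resolution is crepant so $K_{\widetilde{V}} = \sigma^*\mathcal{O}_V(3)$, and bigness of this pullback gives general type. The only difference is that you verify crepancy by the local adjunction computation and bigness via the projection formula and normality, where the paper instead cites Reid and Lazarsfeld's Proposition~2.2.43.
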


\begin{proof}
    Since $V$ is a Cohen--Macaulay complete intersection of type $(2, 2, 2, 2, 2, 2)$ with sum of degrees $12$, we have that the dualizing sheaf $\omega^{\circ}_{V}$ is isomorphic to $ \mathscr{O}_{V}(12- 9) = \mathscr{O}_{V}(3)$ by adjunction. Since $V$ has $A_1$ singularities which are resolved crepantly, denoting the blow-up map $\pi\colon \widetilde{V}\to V$, we have that the canonical bundle $\omega_{\widetilde{V}} $ is isomorphic to $\pi^*\omega^{\circ}_{V}$ (see \cite{reid1987young}). Since $\omega^{\circ}_{V}$ is big, $\pi^*\omega^{\circ}_{V}$ must be big by Proposition 2.2.43 in \cite{lazarsfeld2017positivity}. Hence, $\widetilde{V}$ is of general type.
\end{proof} 

The Bombieri--Lang conjecture \cite{hindry2013diophantine} predicts that the rational points of $\widetilde{V}$ and $V$ are contained in a Zariski closed subset, i.e., a finite union of curves and points. This would guarantee some open subset $V' \subset V$ with no rational points. However, this is still not enough to solve the $3 \times 3$ magic square of squares problem; since a square having distinct and nonzero entries is an open condition, solving the problem is equivalent to showing that a specific open subset $U \subset V$ has no rational points. 

\begin{lemma}
Let $U \subset V$ be the locus corresponding to magic squares with distinct (but possibly zero) entries.  Then $U$ is a Zariski open subscheme of $V$ and the $\Q$-rational points of $U$ correspond to $3 \times 3$ magic squares of squares with distinct positive integer entries.
\end{lemma}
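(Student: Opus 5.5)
We describe $U$ explicitly as the complement of finitely many closed subsets cut out by polynomial equations. Each entry of the magic square attached to a point $P=[A:B:C:D:M:E:F:G:H]\in V$ is the square of the corresponding coordinate. Two entries, say those indexed by coordinates $X$ and $Y$, coincide exactly when $X^2-Y^2=0$. This equation is homogeneous of degree $2$, so it defines a closed subscheme $Z_{X,Y}=V\cap\{X^2=Y^2\}$ of $V$. We set
\[
U \;=\; V\setminus \bigcup_{\{X,Y\}} Z_{X,Y},
\]
where the union runs over the $\binom{9}{2}=36$ unordered pairs of distinct coordinates. A finite union of closed subsets is closed, so $U$ is Zariski open. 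Moreover, $U$ is visibly the locus of points whose associated square has pairwise distinct entries. Zero entries are allowed, as in the statement.

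For the correspondence on rational points, take $P\in U(\Q)$ and scale so that the coordinates are coprime integers. The squares $A^2,\dots,H^2$ are then nonnegative integers. They satisfy the six quadrics of Section~1, which are equivalent to the original row, column and diagonal equations with magic sum $3M^2$, so they form a magic square of squares. Its entries are distinct because $P\in U$. At most one entry can be $0$, since distinct entries admit only one zero.

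The one point requiring care is ruling out a zero entry. Suppose some coordinate vanishes, say $X=0$. Every entry lies on a line through the centre, or is the centre itself. If $X$ is not the centre, the line through $X$ and the centre gives $X^2+X'^2=2M^2$ for the opposite coordinate $X'$, so $X'^2=2M^2$. Since $X'$ and $M$ are integers and $\sqrt2$ is irrational, this forces $X'=M=0$. Then $X'^2=M^2$, which contradicts distinctness. If instead $X=M=0$, then opposite pairs satisfy $X^2+X'^2=0$ over $\Q$, so every coordinate vanishes, which is impossible for a projective point. Hence all entries are positive, and distinct.

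Conversely, let a magic square of squares with distinct positive integer entries be given. Its magic sum is forced to be $3M^2$ by the computation in Section~1. Choosing nonnegative square roots then gives a rational point of $V$ with pairwise distinct squares, that is, a point of $U(\Q)$. The two constructions are mutually inverse once we identify points that differ by sign changes of coordinates and by scaling.

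The step most likely to be delicate is the bookkeeping in this correspondence. Several projective points map to the same square: the sign changes $\mu_2^9/\mu_2$ of Theorem~\ref{thm:main3}, together with overall scaling. So the statement should be read as a correspondence up to this ambiguity, rather than a literal bijection.
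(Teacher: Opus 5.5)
Your proof is correct and follows the paper's route: $U$ is the complement of the finitely many sections $X \pm Y = 0$ (your $X^2 = Y^2$), and the only real content is ruling out a zero coordinate on a rational point with distinct entries. The paper defers that step to the case analysis of coordinate hyperplane sections in Section~\ref{subsec:coordinate_hyperplane_sections}, where relations such as $H = \pm\sqrt{2}M$ appear. Your uniform argument, via the centre-line relations $X^2 + X'^2 = 2M^2$, the irrationality of $\sqrt{2}$, and positivity over $\Q$ when $M = 0$, is a self-contained version of that same analysis.
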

\begin{proof}
Since $U \subset V$ is the complement of a finite number of hyperplane sections, e.g.\ $A-B=0$, etc., it is Zariski open.  For the final claim, by clearing denominators, it suffices to remark that $V$ admits no $\Q$-rational points with distinct coordinates and one of the coordinates equal to zero.  This follows from a case-by-case analysis detailed in Section~\ref{subsec:coordinate_hyperplane_sections}.
\end{proof}

We completely analyze the complement of this open subvariety in Section~\ref{subsec:nondistinct}.

\begin{remark}
A different approach to defining equations for magic squares of squares is given in \cite{bremner_1999_on, bremnersquaresii}, though it still produces the same variety $V$. In Bremner's work, he uses the Lucas parametrization \eqref{eq:Lucas_param} 
setting each expression to a square (e.g. $z - y = A^2$). Geometrically, this corresponds to a complete intersection of 9 multi-homogeneous quadrics in weighted projective space 
\[
X \subset \mathbb{P}(2, 2, 2, 1, 1, 1, 1, 1, 1, 1, 1, 1)
\]
The variety $X$ has two canonical linear projections. The first is a projection $X \to \mathbb{P}^8$ defined by
\[
[x:y:z:A:B:C:D:M:E:F:G:H] \longmapsto [A:B:C:D:M:E:F:G:H],
\]
yielding the associated $3 \times 3$ magic square of squares; the image of this projection is $V$. The second is a projection $X \to \mathbb{P}^2$ defined by
\[
[x:y:z:A:B:C:D:M:E:F:G:H] \longmapsto [x:y:z],
\]
yielding the Lucas parametrization of the magic square; this is a finite cover of degree 256. This perspective can also be used to pass between Bremner's defining equations for K3 surfaces parametrizing squares of 6 squares in \cite{bremnersquaresii} and our descriptions in Section~\ref{subsec:magic_K3}.
\end{remark} 

\subsection{Automorphism Group}
\label{subsec:automorphismgroup}

The automorphism groups of varieties of general type are known to be finite \cite{hacon2013birational}. In addition, the general complete intersection of type $(2, 2, 2, 2, 2, 2)$ in projective space is known to have trivial automorphism group. A full proof of this folklore result can be found in \cite{chen2024automorphism}, with preliminary work done by Benoist~\cite{benoistseparation}. However, due to the symmetry of its construction, the variety $V$ has a large finite automorphism group. There are two subgroups of interest. Firstly, since a magic square of squares 
\[
\magicsquarethree{A^2}{B^2}{C^2}{D^2}{M^2}{E^2}{F^2}{G^2}{H^2}
\]
has square entries, changing the sign of a coordinate of the point $[A:\cdots :H] \in V$ will still yield a magic square of squares. This yields $9$ involutions $\sigma_A, \dots, \sigma_H$ of $V$. This corresponds to a subgroup $S \cong (\mu_2)^9/\mu_2 \leq \mathrm{Aut}(V)$, where we quotient out by the diagonal negation action which is trivial in projective space. 

We also notice that there is a subgroup $T \cong D_4 \leq \mathrm{Aut}(V)$ corresponding to the symmetries of the square. The rotation is given by
\begin{align*}
 \rho : \mathbb{P}^8 &{} \longrightarrow \mathbb{P}^8\\ 
 [A:B:C:D:M:E:F:G:H] &{} \longmapsto [F:D:A:G:M:B:H:E:C]
\end{align*}
with corresponding operation 
\[
\magicsquarethree{A^2}{B^2}{C^2}{D^2}{M^2}{E^2}{F^2}{G^2}{H^2} \longmapsto \magicsquarethree{F^2}{D^2}{A^2}{G^2}{M^2}{B^2}{H^2}{E^2}{C^2} 
\]
on the magic square. The reflection is given by
\begin{align*}
 \psi : \mathbb{P}^8 &{} \longrightarrow \mathbb{P}^8\\ [A:B:C:D:M:E:F:G:H] &{} \longmapsto [C:B:A:E:M:D:H:G:F]
\end{align*}
with corresponding operation
\[
\magicsquarethree{A^2}{B^2}{C^2}{D^2}{M^2}{E^2}{F^2}{G^2}{H^2} \longmapsto \magicsquarethree{C^2}{B^2}{A^2}{E^2}{M^2}{D^2}{H^2}{G^2}{F^2}
\]
on the magic square. The group $T$ acts on $S$ by conjugation, yielding the semidirect product subgroup $G = T \ltimes S \leq \mathrm{Aut}(V)$ of order $256\cdot 8  = 2048$.

We now prove Theorem~\ref{thm:main3}, showing that $G$ is the geometric automorphism group. Before proceeding, we will need a statement about the behavior of finite geometric automorphism groups under base change, which is well known but lacks a citable reference. We prove it for convenience: 

\begin{lemma}
\label{lemma:automorphismgrouptechnicallemma}
    Let $X$ be a smooth projective variety over $\mathbb{Q}$ with finite geometric automorphism group. Then, $\mathrm{Aut}_{\Qbar}(X_{\Qbar}) \cong \mathrm{Aut}_{\C}(X_{\C})$, where $X_{\Qbar}$ and $X_{\C}$ are the base changes of $X$ to $\Qbar$ and $\C$ respectively.
\end{lemma}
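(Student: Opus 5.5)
We will compare both groups with the automorphism group scheme. Since $X$ is projective over $\Q$, the functor $T \mapsto \mathrm{Aut}_T(X_T)$ is represented by a group scheme $\mathbf{Aut}_{X/\Q}$, locally of finite type over $\Q$. It is an open subscheme of the Hilbert scheme of $X \times X$, obtained by taking graphs of automorphisms. Then $\mathrm{Aut}_{\Qbar}(X_{\Qbar}) = \mathbf{Aut}_{X/\Q}(\Qbar)$ and $\mathrm{Aut}_{\C}(X_{\C}) = \mathbf{Aut}_{X/\Q}(\C)$, and the base-change map $\mathrm{Aut}(X_{\Qbar}) \to \mathrm{Aut}(X_{\C})$, $\phi \mapsto \phi \times_{\Qbar} \C$, is the map on points induced by $\Qbar \hookrightarrow \C$. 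It is injective because $\C$ is faithfully flat over $\Qbar$. So the content is surjectivity, i.e.\ every complex automorphism descends to $\Qbar$.

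First we show that $\mathbf{Aut}_{X/\Q}$ is étale over $\Q$. We are in characteristic $0$, so by Cartier's theorem it is reduced, hence smooth. Its tangent space at the identity is $H^0(X, T_X)$. This space commutes with flat base change, so $\dim H^0(X_{\Qbar}, T_{X_{\Qbar}}) = \dim H^0(X_\C, T_{X_\C})$. The hypothesis that the geometric automorphism group is finite means that $\mathbf{Aut}_{X_{\Qbar}}$ is a smooth group scheme over $\Qbar$ with finitely many $\Qbar$-points. Over an algebraically closed field, a smooth locally finite type scheme with finitely many points is zero-dimensional, so $H^0(X, T_X) = 0$. Consequently $\mathbf{Aut}_{X/\Q}$ is étale over $\Q$, and it has finitely many geometric points, so it is finite étale. (Here we read ``finite geometric automorphism group'' as finiteness over $\Qbar$; if it is meant over $\C$, the same dimension argument runs from the $\C$ side.)

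Now the conclusion. A finite étale $\Q$-scheme $Z = \coprod \mathrm{Spec}\, K_i$, with the $K_i$ number fields, satisfies $Z(\Qbar) = Z(\C)$. Indeed, every $\Q$-embedding $K_i \to \C$ has image in $\Qbar$, since the elements of $K_i$ are algebraic. Hence the injection $\mathrm{Aut}(X_{\Qbar}) \to \mathrm{Aut}(X_\C)$ is a bijection of sets. It is a group homomorphism because base change respects composition, so it is an isomorphism.

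The main obstacle is the étaleness step: one must justify representability and locally finite type (Grothendieck's Hilbert scheme theory, using projectivity) and then Cartier smoothness. A more hands-on alternative is a spreading-out argument. A complex automorphism $\phi$ is defined over a finitely generated $\Qbar$-algebra $R$, giving a family $\phi_s$ of automorphisms over $\mathrm{Spec}\, R$. Each $\Qbar$-point $s$ of $\mathrm{Spec}\, R$ yields an element of the finite group $\mathrm{Aut}(X_{\Qbar})$. The $\mathrm{Spec}\, R$ can be taken irreducible, and the graph locus where $\phi_s$ equals a fixed $\psi$ is closed, so finiteness forces $\phi_s$ to be constant on a dense set. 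Therefore $\phi = \psi_\C$. This second route avoids tangent spaces, and I would use it as the fallback.
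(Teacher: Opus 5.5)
Your proposal is correct and follows the same route as the paper: represent the automorphism functor by a group scheme over $\Q$ (the paper cites Matsumura--Oort, you realize it as an open subscheme of the Hilbert scheme of $X \times X$), use finiteness of the geometric points together with Cartier's theorem to conclude it is finite \'etale, and then compare its $\Qbar$- and $\C$-points. Your version is in fact slightly more careful than the paper's. You say ``locally of finite type'' where the paper says ``finite type,'' you note that base change is injective and is the map on points, and you explain why a finite \'etale $\Q$-scheme has the same $\Qbar$- and $\C$-points. The tangent-space step and the spreading-out fallback are sound but not needed.
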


\begin{proof}
    By \cite{matsumura1967representability}, the geometric automorphism group of $X$ is represented by a finite type group scheme over $\Q$, which we denote by $\mathcal{G}$. However, since the geometric automorphism group of $X$ is finite, $\mathcal{G}(\Qbar)$ is finite. Hence $\mathcal{G}$ must be 0-dimensional and thus finite. Furthermore, since $X$ is defined over a field of characteristic zero, by Cartier's Theorem, $\mathcal{G}$ must also be smooth, hence reduced. Thus $\mathbb{G}$ is a finite \'etale group scheme, and so $\mathrm{Aut}_{\Qbar}(X_{\Qbar}) \cong \mathcal{G}(\Qbar) \cong \mathcal{G}(\C) \cong \mathrm{Aut}_{\C}(X_\C)$ as desired.  
\end{proof}

\begin{proof}[Proof of Theorem~\ref{thm:main3}]
Since $V$ is a normal surface, every automorphism lifts uniquely along the minimal resolution $\pi\colon \widetilde V \to V$, so $\operatorname{Aut}(V) \cong \operatorname{Aut}(\widetilde V)$. By Lemma \ref{lemma:automorphismgrouptechnicallemma}, it suffices to prove the result over $\mathbb{C}$. For convenience, throughout the proof, we write $V$ for the base change $V_\C$ (similarly for $\widetilde{V}$ as $\widetilde{V}_\C$).

By the proof of Lemma~\ref{lemma:gentype}, we have $\omega_{\widetilde V} \cong \pi^{*}\mathscr{O}_V(3)$. Every automorphism of $\widetilde V$ preserves $\omega_{\widetilde V}$, and since $\operatorname{Pic}(\widetilde V)$ is torsion free by Proposition~\ref{prop:pic-ns}, it preserves the unique cube root $\pi^{*}\mathscr{O}_V(1)$. Since $V$ is projectively normal and the Gr\"obner basis shows that its defining ideal contains no linear forms, restriction gives $H^0(\mathbb{P}^8, \mathscr{O}_{\mathbb{P}^8}(1)) \cong H^0(V, \mathscr{O}_V(1))$, so $h^0(V, \mathscr{O}_{V}(1)) = h^0(\mathbb{P}^8, \mathscr{O}_{\mathbb{P}^8}(1)) = 9$. Hence, by the fact that $\pi_*\mathscr{O}_{\widetilde{V}} \cong \mathscr{O}_V$ and the projection formula, we have $h^{0}(\widetilde V, \pi^{*}\mathscr{O}_V(1)) = h^{0}(V, \mathscr{O}_V(1)) = 9$. Since the associated morphism factors as $\widetilde V \xrightarrow{\pi} V \hookrightarrow \mathbb{P}^{8}$, every automorphism is induced by an element of $\operatorname{PGL}_{9}(\C)$ preserving $V$. Since $V$ is a complete intersection, whose ideal is generated in degree two, these are exactly the elements preserving the six-dimensional space $W \subset H^{0}(\mathbb{P}^{8}, \mathscr{O}(2))$ spanned by the defining quadrics. Thus $\operatorname{Aut}(V) = \operatorname{Stab}_{\operatorname{PGL}_{9}(\C)}(W)$. Since $G \subseteq \operatorname{Stab}_{\operatorname{PGL}_{9}(\C)}(W)$, it remains to prove the reverse inclusion.

The defining quadrics are diagonal. So, for each coordinate $v \in \{A, \dots, H\}$, the coefficient of $v^{2}$ defines a linear functional $c_{v} \colon W \to \mathbb{C}$. Restriction to $W$ gives a homomorphism $\operatorname{Stab}_{\operatorname{PGL}_{9}}(W) \to \operatorname{PGL}(W)$, whose kernel we call $K$ and whose image we call $\Gamma$. We will show $K = S$ and $\Gamma = T$.

Let $W^{\perp} \subseteq \mathbb{C}^{9}$ denote the space of linear relations $\sum_{v} \lambda_{v} c_{v} = 0$ among the nine functionals. A vector $(\lambda_{v})$ is in $W^{\perp}$ when it is orthogonal to every defining quadric. In other words, when $(\lambda_{v})$ is viewed as a $3 \times 3$ array, it has all three rows, three columns and two main diagonals summing to three times its center. That is, $W^{\perp}$ is the three-dimensional space of $3 \times 3$ magic squares.

To show $K = S$, we first note that the nine functionals $c_{v}$ are pairwise linearly independent. Now, if $g \in K$, then after scaling, $g$ fixes $W$ pointwise, so $g^{\top} q\, g = q$ for every $q \in W$. A general $q \in W$ is nondegenerate, as each $c_{v}$ is nonzero. Hence, fixing such a $q$, the element $g$ commutes with every $q^{-1}q'$ ($q' \in W$), a diagonal matrix with entries $c_{v}(q')/c_{v}(q)$. Since the $c_v$'s are pairwise linearly independent, these separate the nine coordinates. Hence, $g$ is diagonal, and a diagonal element fixing $W$ has all entries $\pm 1$. This gives $K = (\mu_{2})^{9}/\mu_{2} = S$.

Now, we show that $\Gamma = T$. The key point is that the configuration of the nine coefficient functionals remembers the original magic square combinatorics. We will make this precise using Gale duality.

A linear automorphism preserves the rank of a quadric, hence the discriminant locus of $W$. Since $W$ is a system of diagonal quadrics, the discriminant locus is the union of the nine hyperplanes $\{c_{v} = 0\} \subset \mathbb{P}(W)$. Since the $c_{v}$ are distinct, this amounts to projectively permuting the nine points $[c_{v}] \in \mathbb{P}(W^{\vee}) = \mathbb{P}^{5}$. The space of linear relations among the $c_{v}$ is $W^{\perp}$, the magic squares. Choose the basis $J, N_{1}, N_{2}$ of $W^{\perp}$, where $J$ is the constant $3 \times 3$ matrix
\[
J = \begin{pmatrix}
    1 & 1 &1 \\
    1 & 1 & 1\\
    1 & 1 & 1
\end{pmatrix}
\]
and
\[
  N_{1} = \begin{pmatrix} 1 & -1 & 0 \\ -1 & 0 & 1 \\ 0 & 1 & -1 \end{pmatrix},
  \qquad
  N_{2} = \begin{pmatrix} 0 & 1 & -1 \\ -1 & 0 & 1 \\ 1 & -1 & 0 \end{pmatrix}
\]
are sum-zero. Defining
\[
\mathrm{ev}_v\colon W^{\perp} \to \mathbb{C}
\]
to be projection to the $v$th coordinate on $W^{\perp}$, the Gale transform \cite{gale1956neighboring} of the configuration $\{c_v\}_v$ is the configuration of the projective points 
\[
\{[\mathrm{ev}_v(J): \mathrm{ev}_v(N_1): \mathrm{ev}_v(N_2)]\}_v = \{[1:y_v:z_v]\}_v \in \mathbb{P}^2.
\]
Here, $(y_v, z_v)$ is a point of the $3 \times 3$ grid $\{-1, 0, 1\}^2$, and the center $M$ is at the origin. The projective symmetries of a configuration of points and their Gale transforms coincide, see \cite{eisenbud2000projective, dolgachev1988point, dolgachev2012classical}. Hence, $\Gamma$ is identified with the projective automorphisms of the grid $\{-1, 0, 1\}^2$. A projective transformation permuting the nine points must preserve all collinear triples. For the grid $\{-1, 0, 1\}^2$ these are the three rows, the three columns, and the two diagonals. We note that this labelling does not line up with the original magic square on the nose: it corresponds to
\[
\magicsquarethree{B}{F}{E}{H}{M}{A}{D}{C}{G}
\]
Due to this, the collinear triples correspond to the four magic sums through the center and the broken diagonals.

Any projective transformation must fix the center $M$, which is on four lines, preserve the set $\{B, D, E, G\}$ of points on three lines, and the set $\{A, C, F, H\}$ of points on two, while permuting the former as a symmetry of the square they span. This implies that it is determined by rotations or reflections of that square, i.e. it is an element of $D_4$. Hence, $\Gamma \subseteq D_4$. Since $T \cong D_4$ maps isomorphically onto its image under the projection $\mathrm{Aut}(V) \to \Gamma$, we have that $\Gamma = T$.
 
Finally, we have that $\lvert \mathrm{Aut}(V) \rvert = \lvert K \rvert \cdot \lvert\Gamma \rvert = 256 \cdot 8 = 2048 = \lvert G \rvert$, so $\mathrm{Aut}(V) = \mathrm{Aut}(\widetilde{V}) = G$. 
\end{proof}

\begin{remark}
The vector space of $3 \times 3$ magic squares $W^{\perp}$ that controls the proof is exactly the space parametrized by Lucas~\cite{lucas_1891_sur} in the introduction. Indeed, in the basis above, a general magic square is
\[
z\,J - y\,N_{1} + x\,N_{2}
  \;=\;
  \begin{pmatrix}
    z - y & x + y + z & z - x \\
    z + y - x & z & z + x - y \\
    x + z & z - x - y & y + z
  \end{pmatrix},
\]
so $(x, y, z)$ are linear coordinates on $W^{\perp}$, with $z$ the center. Reading each entry of the Lucas square as a linear form in $(x, y, z)$ recovers the nine points of the Gale transform, with the labelling described in the proof of Theorem \ref{thm:main3}.
\end{remark}

\subsection{Trivial and Singular Points}
\label{subsec:trivialpoints}
This description of the automorphism group can be used to gain a simpler understanding of some rational points and the singular points on $V$. For example, $V$ has 256 obvious rational points corresponding to $[\pm1, \pm1, \pm1, \pm1, \pm1, \pm1, \pm1, \pm1, \pm1]$, all yielding the trivial magic square of squares
\[
\magicsquarethree{1}{1}{1}{1}{1}{1}{1}{1}{1}
\]
These 256 points are the orbit of the point $[1, 1, 1, 1, 1, 1, 1, 1, 1]$ under the automorphism group.

Secondly, the 256 singular points of $V$ are the unions of the orbits of the points 
\begin{align*}
p_1 &{} = [0, 2, \sqrt{2}, 2, \sqrt{2}, 0, \sqrt{2}, 0, 2],\\
p_2 &{} = [i, 0, 1, \sqrt{2}, 0, \sqrt{-2}, i, 0, 1],\\ 
p_3 &{} = [i, 1, 0, 1, 0, i, 0, i, 1], \\
\end{align*}
under the action of the automorphism group. The orbit of $p_1$ is of order 128, while the orbits of $p_2$ and $p_3$ are of order 64. These correspond to the squares
\[
\magicsquarethree{0}{4}{2}{4}{2}{0}{2}{0}{4}
\hspace{10pt} \magicsquarethree{-1}{\;0\;\;}{1}{2}{\;0\;\;}{-2}{-1}{\;0\;\;}{1} \hspace{10pt} \magicsquarethree{-1}{1}{0}{1}{0}{-1}{0}{-1}{1}
\]
Later on, we will see that the automorphism group will aid in generating many divisors on $V$.

\section{Hodge Numbers and Associated Invariants}

In this section, we compute the basic topological invariants and Hodge diamond of the resolution $\widetilde{V}$ of the variety of $3 \times 3$ magic squares of squares. Recall that for a K\"ahler manifold $X$, its singular cohomology decomposes as
\[
H^i(X, \mathbb{C}) \cong \bigoplus_{ p + q = i} H^{q}(X, \Omega^{p}_{X/\mathbb{C}})
\]
for $0 \leq i \leq 2n$. We will use $h^{p, q}$ to denote $\dim_{\mathbb{C}} H^{q}(X, \Omega^{p}_{X/\mathbb{C}})$, and call the collection of $h^{p, q}$'s the Hodge numbers of $X$. In the case that $X$ is the analytification of a variety $Y$ over a number field $K$, GAGA~\cite{serre_1956_gomtrie} implies that $h^{p, q} = \dim_{K}H^q(Y, \Omega^{p}_{Y/K})$. Letting $b_i$ denote the $i$th Betti number of $X$, we get the formula
\[
b_i = \sum_{p + q = i} h^{p, q}.
\]
For more details, see \cite{griffiths2014principles}.

In the case of $V$, since it has only ordinary double points, we can show that it appears as the central fiber of a flat family of complete intersections whose general member is smooth.

\begin{lemma}\label{lem:smoothing}
Write $V = V(q_1, \dots, q_6) \subseteq \mathbb{P}^8$. There exist quadrics $f_1,\dots,f_6$ on $\P^8$ such that the family
\[
  \mathcal{X} = \bigg\{(x,t)\in\P^8\times\mathbb{A}^1 \  \bigg| \ 
  q_i(x)+tf_i(x)=0,\ i=1,\cdots,6 \bigg\}
\]
has smooth total space in an analytic neighborhood of $\mathcal X_0=V$ over $\mathbb{C}$, and smooth fiber $\mathcal X_t$ for all $t\neq 0$ in some analytic disc $\Delta$ containing $0 \in \mathbb{A}^1$.
\end{lemma}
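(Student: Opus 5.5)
The plan is to pick the perturbing quadrics $f_i$ generically and check the two required smoothness conditions separately: total space smooth near $V$, and fibers smooth for small $t \neq 0$.

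\textbf{Step 1: a general member is smooth.} By Bertini applied to the base-point-free linear system of quadrics, a general complete intersection of six quadrics in $\mathbb{P}^8$ is smooth. Being smooth is an open condition in the parameter space $\prod_i H^0(\mathbb{P}^8,\mathscr{O}(2))$, and complete intersections of dimension two form an open set there too. So for a general choice of $(f_1,\dots,f_6)$ the fiber $\mathcal{X}_1 = V(q_i + f_i)$ is a smooth complete intersection surface. The family over $\mathbb{A}^1$ is then a pencil-type line through the point $(q_i)$. Its singular fibers are cut out by the pullback of the discriminant hypersurface, a proper Zariski-closed subset of $\mathbb{A}^1$ once one fiber is smooth. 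That subset is finite, so there is a disc $\Delta$ around $0$ on which $\mathcal{X}_t$ is smooth for $t\ne 0$.

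\textbf{Step 2: total space smooth at the nodes.} Away from the 256 singular points of $V$, the total space is smooth because $\mathcal{X}_0 = V$ is smooth there and is a Cartier divisor $\{t=0\}$. At a singular point $p$ of $V$, the Jacobian of $(q_1+tf_1,\dots,q_6+tf_6)$ in the variables $(x,t)$ has a $t$-column $(f_1(p),\dots,f_6(p))$. At an $A_1$ point the $6\times 9$ Jacobian of the $q_i$ (affine chart, 8 variables) has rank exactly $5$, since $V$ is locally a hypersurface in a smooth threefold complete intersection of five of the $q$'s. Its image is therefore a hyperplane $L_p \subset \mathbb{C}^6$. The total space is smooth at $(p,0)$ iff $(f_i(p)) \notin L_p$. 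For each of the finitely many $p$ this is a nonempty open condition on the $f_i$: for example, take $f_1$ with $f_1(p)\ne 0$ after changing basis of $W$ so that $L_p$ is spanned by the last five coordinates. Intersecting these 256 open conditions with the open condition of Step 1 gives a general choice satisfying everything.

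\textbf{Step 3: near the central fiber.} Smoothness of the total space is open, and $V$ is compact, so once it holds at every point of $\mathcal{X}_0$ it holds on an analytic neighborhood of $\mathcal{X}_0$. Shrinking $\Delta$ if needed finishes the proof.

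The main obstacle is the rank statement in Step 2: that at each node the Jacobian of the $q_i$ drops rank by exactly one. I would deduce this from the local analytic description of an ordinary double point in a complete intersection. Alternatively, it can be verified directly from the diagonal form of the quadrics at the orbit representatives $p_1,p_2,p_3$. The Jacobian there is $(2\,c_v(q_i)\,x_v)$, and the rank can be checked using the automorphism group to reduce to three points.
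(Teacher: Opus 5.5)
Your proof is correct and follows essentially the same route as the paper: the Jacobian criterion with the extra $t$-column, the nonempty open condition $(f_i(p))\notin L_p$ at each of the 256 nodes, and a generic choice of the $f_i$ making some member of the pencil smooth, so that only finitely many fibers are singular and a small disc avoids them. The rank-$5$ claim you flag as the main obstacle is simply asserted in the paper. It follows at once from the fact that an $A_1$ surface singularity has three-dimensional Zariski tangent space, so the affine Jacobian has rank $8-3=5$; your ``hypersurface in a smooth threefold'' phrasing is equivalent to this rather than a justification of it.
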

 
\begin{proof}
Let $F_i=q_i+tf_i$. At a point $(x,0)$ with $x\in V$ smooth, the Jacobian $J(x) := \bigl[\partial F_i/\partial x_j\bigr]_{i, j}$ already has rank $6$, so $\mathcal X$ is smooth at $(x,0)$ for any choice of $f_i$ not all vanishing at $x$. At a node $p$ of $V$, $J(p)$ has rank $5$. Let $\Lambda \subset\mathbb C^6$ denote its image, a hyperplane. The Jacobian of $\mathcal X$ at $(p,0)$ is $J$ together with the extra column $\bigl(f_1(p),\dots,f_6(p)\bigr)^{\top}$ coming from $\partial/\partial t$. So, it has rank $6$, i.e., $\mathcal X$ is smooth at $(p,0)$, when
\begin{equation}\label{eq:node-transversality}
  \bigl(f_1(p),\dots,f_6(p)\bigr)\;\notin\;\Lambda .
\end{equation}
For a fixed $p\neq 0$, evaluation $f\mapsto f(p)$ is a nonzero linear functional on quadrics, so $(f_1,\dots,f_6)\mapsto \bigl(f_i(p)\bigr)$ is surjective onto $\mathbb C^6$. This implies that \eqref{eq:node-transversality} is a nonempty Zariski-open condition on the $6$-tuple of $f_i$, and the intersection of these conditions over the $256$ nodes is again nonempty and open. Since a general 6-tuple of quadrics in $\mathbb{P}^8$ yields a smooth complete intersection surface, we can choose $(f_1, \dots, f_6)$ so that the pencil $\{q+tf\}_t$ meets the locus of smooth complete intersections. So, $\mathcal X_t$ is
smooth for general $t$. Shrinking to a disc $\Delta$ finishes the proof.
\end{proof}
By a theorem of Atiyah~\cite{atiyah_1958_on} on the topology of degenerations with ordinary double points, Lemma \ref{lem:smoothing} implies that $\widetilde{V}$ is diffeomorphic to a nearby smooth fiber. Hence, $\widetilde{V}$ has the same Betti numbers and signature as the general complete intersection of type $(2, 2, 2, 2, 2, 2)$. For a compact K\"ahler surface, the Hodge numbers are determined by the Betti numbers and the signature $\tau$ of the intersection form. So, they are invariant under diffeomorphism. The Chern numbers of a K\"ahler surface can be similarly determined, so they are also invariant under diffeomorphism. Thus, the Hodge and Chern numbers of $\widetilde{V}^{an}$ are equal to the Hodge and Chern numbers of the generic complete intersection surface of type $(2, 2, 2, 2, 2, 2)$.

The cohomology of complete intersections is well understood \cite{deligne2006cohomologie, fhirzebruch_1978_topological}; we provide a recap of the computation here. For any complete intersection $X$ of dimension $d$ and for any $0 < i< d$, we have that $h^i(X, \mathscr{O}_{X}) = 0$, so $q(\widetilde{V}^{an}) = h^{0, 1}(\widetilde{V}^{an}) = h^{1, 0}(\widetilde{V}^{an})$ = 0.

It remains to calculate $h^{1, 1}$ and $p_g = h^{0, 2}$. However, for a general complete intersection $V_n$ of 6 quadrics in $\mathbb{P}^{n+6}$, denoting 

\[
\chi^{p}(V_n) = \sum_{q \geq 0} (-1)^{q}h^{p, q}(V_n)
\]
for the $p$th holomorphic Euler characteristic, we have the formula 
\[
\chi^p(V_n) = 
\begin{cases}
 (-1)^{n-p}h^{p, n-p}(V_n) + (-1)^p, \quad &{} 2p \neq n \\
 (-1)^{p}h^{p, p}(V_n), \quad &{} 2p = n
\end{cases}
\]
by Hirzebruch--Riemann--Roch. We can obtain these by the formula
\[  
\sum_{n \geq 0}\sum_{p \geq 0} \chi^{p}(V_n)y^{p}z^{n+6} = \frac{1}{(1-z)(1+zy)}\prod_{i = 1}^{6} \frac{(1+zy)^{2} - (1-z)^{2}}{(1+zy)^{2} + y(1-z)^{2}}
\]
given in \cite{fhirzebruch_1978_topological}. So, to compute the Hodge numbers for $\widetilde{V}^{an}$, we only need to compute the coefficients of the $z^8$, $yz^8$, and $y^2z^8$ terms. Using an algorithm for this \cite{belmans_2018_hodge}, we compute the Hodge diamond
\[
\begin{array}{ccccc}
 &  & 1 &  & \\
 & 0 &  & 0 & \\
111 &  & 544 &  & 111\\
 & 0 &  & 0 & \\
 &  & 1 &  &
\end{array}
\]
This yields the Betti numbers $b^0$ = 1, $b^1$ = 0, $b^2$ = 766, $b^3$ = 0, $b^4$ = 1, and topological Euler characteristic $\chi_{top}(\widetilde{V}^{an})$ = 768. By Poincar\'e duality and the Lefschetz hyperplane theorem forcing $H^n(\widetilde{V}^{an}, \mathbb{Z}) \cong H^n(\mathbb{P}^8, \mathbb{Z})$ for all $n \neq 2$, which is torsion-free, and the universal coefficient theorem for $H^2(\widetilde{V}^{an}, \mathbb{Z})$, the singular cohomology of $\widetilde{V}^{an}$ is torsion-free. 

For the Chern numbers, we use the following result:
\begin{prop}[\cite{bruin_2022_explicit}]  
For a complete intersection $X$ of $n-2$ quadrics in $\mathbb{P}^n$, its Chern numbers are given by the formulas
\begin{align*}
c_{1}^2(X) = &{\,} (n-5)^{2}2^{n-2},\\
c_2(X) = \chi_{top}(X) = &{\,} (n^2 - 7n + 16)2^{n-3}.
\end{align*}
\end{prop}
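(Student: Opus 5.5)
This is a standard computation from the normal bundle sequence. Let $X\subset\mathbb{P}^n$ be a smooth complete intersection of $n-2$ quadrics, so $X$ is a surface, and let $h$ denote the hyperplane class restricted to $X$. Then $\deg X = h^2 = 2^{n-2}$. The conormal sequence
\[
0 \longrightarrow T_X \longrightarrow T_{\mathbb{P}^n}|_X \longrightarrow \mathscr{O}_X(2)^{\oplus(n-2)} \longrightarrow 0,
\]
together with the Euler sequence $c(T_{\mathbb{P}^n}) = (1+h)^{n+1}$, gives
\[
c(T_X) = \frac{(1+h)^{n+1}}{(1+2h)^{n-2}}.
\]
We expand this to second order in $h$ and integrate against $[X]$.

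For the first Chern class, $c_1(X) = (n+1)h - 2(n-2)h = (5-n)h$. This agrees with adjunction, $K_X = \mathscr{O}_X(2(n-2)-n-1)=\mathscr{O}_X(n-5)$. Hence
\[
c_1^2(X) = (n-5)^2 h^2 = (n-5)^2 2^{n-2}.
\]

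For the second Chern class, we take the degree-two coefficient. Write $(1+2h)^{-(n-2)} = 1 - 2(n-2)h + 4\binom{n-1}{2}h^2 + \cdots$, since $\binom{-(n-2)}{2} = \binom{n-1}{2}$. The $h^2$ coefficient of $c(T_X)$ is then
\[
\binom{n+1}{2} - 2(n-2)(n+1) + 2(n-1)(n-2).
\]
Expanding, this equals $\tfrac{(n+1)n}{2} - 2(n^2-n-2) + 2(n^2-3n+2)$, which simplifies to $\tfrac{n^2+n}{2} - 4n + 8 = \tfrac{n^2-7n+16}{2}$. Multiplying by $h^2 = 2^{n-2}$ gives
\[
c_2(X) = (n^2-7n+16)\,2^{n-3},
\]
and Gauss--Bonnet identifies $c_2$ with $\chi_{top}$.

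The only real work is the binomial bookkeeping in the $h^2$ coefficient. As a check, $n=4$ gives $c_1^2 = 4$ and $c_2 = 8$, matching the degree-$4$ del Pezzo surface. Also, $n=8$ gives $c_1^2 = 576$ and $c_2 = 768$, agreeing with $\chi_{top}(\widetilde{V}^{an}) = 768$ computed above and with Noether's formula $12\chi(\mathscr{O}) = 12\cdot 112 = 1344 = 576 + 768$.
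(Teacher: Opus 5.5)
Your proof is correct. From $c(T_X)=(1+h)^{n+1}(1+2h)^{-(n-2)}$ you get $c_1=(5-n)h$ and an $h^2$-coefficient of $\tfrac{n^2-7n+16}{2}$, and Bézout's $h^2=2^{n-2}$ finishes both formulas.

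The paper gives no argument for this statement and only cites \cite{bruin_2022_explicit}. Your normal-bundle computation therefore supplies the standard derivation behind the cited formula. It also gives an independent route to $\chi_{top}=768$, which the paper otherwise gets from the Hodge diamond via Hirzebruch's generating function. Your Noether check $576+768=12\cdot 112$ ties the two computations together.

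Two small points:
\begin{enumerate}
\item The sequence you write, $0\to T_X\to T_{\mathbb{P}^n}|_X\to\mathscr{O}_X(2)^{\oplus(n-2)}\to 0$, is the normal bundle sequence, not the conormal one.
\item The smoothness hypothesis you add is genuinely needed, since the paper's $V$ is nodal. Carrying the numbers over to $\widetilde{V}$ still depends on the diffeomorphism with a smooth fiber, or, for $c_1^2$, directly on $K_{\widetilde V}=\pi^*\mathscr{O}_V(3)$.
\end{enumerate}
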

\noindent Here, $n = 8$, so for $\widetilde{V}^{an}$ we recover the Chern numbers $c_{1}^2(\widetilde{V}^{an}) = 576$, $c_2(\widetilde{V}^{an}) = 768$.

Next, we determine the structure of $H^2(\widetilde{V}^{an}, \mathbb{Z})$ as a lattice. The lattice is unimodular by Poincar\'e duality, since $H^2(\widetilde{V}^{\mathrm{an}}, \mathbb{Z})$ is torsion-free. To determine its parity, recall that $\widetilde{V}^{an}$ is diffeomorphic to a smooth complete intersection $X$ of type $(2,2,2,2,2,2)$ in $\mathbb{P}^8$, and that the isomorphism class of the lattice with its intersection form is invariant under diffeomorphism. So, it suffices to compute the parity on $X$, where it is determined by the second Stiefel--Whitney class.

Let $[H] \in H^2(X, \mathbb{Z})$ be the first Chern class of the hyperplane section. Since $X$ is a complex manifold, its Stiefel--Whitney classes are the mod 2 reductions of its Chern classes \cite{milnor1974characteristic}. Since $[K_X] = 3[H]$ by adjunction, we have
\[
w_2(X) = c_1(X) \ \mathrm{mod}  \ 2 = -3h \ \mathrm{mod} \ 2 = h \ \mathrm{mod} \ 2
\]
We claim that $h$ is nonzero mod 2. Indeed, the pair $(\mathbb{P}^8, X)$ is 2-connected by the Lefschetz hyperplane theorem, so $H^2(\mathbb{P}^8,  X; \mathbb{Z}/2\mathbb{Z}) = 0$ by the universal coefficient theorem. The long exact sequence of the pair yields an injection $H^2(\mathbb{P}^8, \mathbb{Z}/2\mathbb{Z}) \hookrightarrow H^2(X, \mathbb{Z}/2\mathbb{Z})$. The hyperplane class generates the former, so its restriction $h$ is nonzero in $H^2(X, \mathbb{Z}/2\mathbb{Z})$. Hence, $w_2(X) \neq 0$.

Since $X$ is orientable, the second Wu class of $X$ coincides with $w_2(X)$. Hence, Wu's formula gives $x \cdot x = w_2(X) \cdot x$ for all $x \in H^2(X, \mathbb{Z}/2\mathbb{Z})$. The mod 2 cup product pairing on $H^2(X, \mathbb{Z}/2\mathbb{Z})$ is nondegenerate by Poincar\'e duality, so there is a class $x$ with $w_2(X) \cdot x = 1$, and thus with $x \cdot x = 1$. Moreover, $H^3(X, \mathbb{Z}) = 0$, so the reduction map $H^2(X, \mathbb{Z}) \to H^2(X, \mathbb{Z}/2\mathbb{Z})$ is surjective and $x$ lifts to an integral class of odd self-intersection. 

Any odd self-intersection forces the intersection form to be odd. By the Hodge index theorem, the signature is $\tau = b_2^+ - b_2^- = -320$, so $(b_2^+, b_2^-) = (223, 543)$. Hence, by the classification of indefinite odd unimodular lattices \cite{serre2012course}, we have $H^2(\widetilde{V}^{an}, \mathbb{Z}) \cong \langle 1 \rangle^{\oplus 223} \oplus \langle -1\rangle^{\oplus 543}$.

Finally, we remark that since $q(\widetilde{V}) = 0$, the Albanese variety of $\widetilde{V}$, $\mathrm{Alb}(\widetilde{V})$, is a point. Hence, $\widetilde{V}$ admits no interesting morphisms to abelian varieties.

\section{Special Hyperplane Sections of $V$}
\label{sec:divisors}
In this section, we produce $416$ explicit divisor classes on $\widetilde{V}$. These will form the first major collection of generators used in the Picard rank computation of Section \ref{sec:PicardLattice}. We obtain them by taking irreducible components of split hyperplane sections defined over various number fields, many of which arise naturally from the Diophantine problem itself.  
First is the locus of squares with non-distinct entries (\ref{subsec:nondistinct}), whose components contain some rational curves, as well as high genus curves corresponding to magic squares of squares with five or seven distinct entries. Second is the coordinate hyperplane sections (\ref{subsec:coordinate_hyperplane_sections}), which give a link to the congruent number elliptic curve $y^2 = x^3 - d^2x$ and yield new families of magic squares of squares over biquadratic fields. The remaining hyperplane sections, defined over number fields such as $\Q(i, \sqrt{3})$ and $\Q(\sqrt{-2})$, are more procedural and handled in Section~\ref{sec:numberfields}. Finally, in Appendix~\ref{subsec:lines-conics}, we use a Schubert-cell stratification of the projective Grassmannian $\mathbb{G}(1,8)$, following Elsenhans--Jahnel~\cite{elsenhans2008k3}, to show that $V$ contains no lines.

\subsection{The Locus of Squares with Non-Distinct Entries}
\label{subsec:nondistinct}
Consider the closed subscheme $Z \subset V$ consisting of squares with non-distinct entries.  We refer to $Z$ as the \linedef{nondistinct locus} and to its complement $U = V \smallsetminus Z$ the \linedef{distinct locus}. The nondistinct locus is the union of hyperplane sections defined by equating elements around the square. A pattern emerges when decomposing these hyperplane intersections into irreducible components. 

Intersecting $V$ with the hyperplane $A - B = 0$ illustrates the general behavior. Doing so sets $A^2 = B^2$, yielding the equations 
\begin{align*}
    A^2 + G^2 - 2M^2 = &{\,} 0,\\
    A^2 + H^2 - 2M^2 = &{\,} 0,
\end{align*}
implying that $G^2 = H^2$. Hence, the hyperplane section $A - B = 0$ splits into two irreducible components determined by a choice of sign in
\begin{align*}
    A - B = &{\,} 0, \\
    G \pm H =  &{\,}0.
\end{align*}
The interpretation in terms of magic squares is a square with 7 distinct entries
\[
\magicsquarethree{A^2}{A^2}{C^2}{D^2}{M^2}{E^2}{F^2}{H^2}{H^2}
\]
Intersecting $V$ with hyperplanes equating entries of the square that are adjacent horizontally or vertically yields analogous decompositions into two irreducible components, since these squares comprise the orbit of the hyperplane section $A = B$ by $\mathrm{Aut}(V)$. 

However, if we intersect $V$ with the hyperplane $A - C = 0$, we get eight irreducible components indexed by the choices of signs in
\[
A - C =  D \pm E = M \pm E = F \pm H =  0.
\]
This corresponds to magic squares of the form
\[
\magicsquarethree{A^2}{B^2}{A^2}{M^2}{M^2}{M^2}{F^2}{G^2}{F^2}
\]
Intersecting $V$ with the hyperplane equating any two same-row or same-column corner entries yields analogous decompositions into eight irreducible components. 

When 2 entries across the left diagonal are set equal (i.e., intersecting with the hyperplanes $A = \pm M$ or $H = \pm M$, which yield the same components), we are left with 64 irreducible components corresponding to a square with 3 distinct entries
\[
\magicsquarethree{A^2}{B^2}{C^2}{C^2}{A^2}{B^2}{B^2}{C^2}{A^2}
\]
with the relation $B^2 + C^2 = 2A^2$, i.e., the conic parametrizing squares in arithmetic progression. This is a smooth conic with a rational point $[1:1:1]$, so it is a rational curve on $V$. These 64 rational curves realize magic squares of squares with 3 distinct entries, such as 
\[
\magicsquarethree{13^2}{7^2}{17^2}{17^2}{13^2}{7^2}{7^2}{17^2}{13^2}
\]
A similar computation can be carried out by setting any 2 entries across the right diagonal equal (i.e. $C = \pm M$ or $F = \pm M$), yielding 64 more rational curves.

We also note that $V$'s singular points are entirely contained within this locus and the five distinct entry loci, meaning that $U$ is smooth. The hyperplane sections where the splittings will occur are:
\begin{center}
   \begin{tabular}{c|c|c}
      3 Distinct Entries  &  5 Distinct Entries & 7 Distinct Entries\\
      \hline
      $A \pm M$  & $A \pm C$ & $A \pm B$ \\
      $C \pm M$ & $A \pm F$ & $B \pm C$\\
      & & $A \pm D$\\
      &  & $C \pm E$\\
      
   \end{tabular}
\end{center}
Indeed, other hyperplane sections either do not split further, yielding a generic hyperplane section, or are covered by those listed; for example, $B = \pm M$ forces $G^2 = M^2$, which in turn forces $A^2 = F^2$. 

We will denote the three distinct entry locus $Z_3$, the five distinct entry locus $Z_5$, and the seven distinct entry locus $Z_7$. In summary, we have the following:

\begin{prop}
\label{components}
The nondistinct locus $Z \subset V$ decomposes into $Z_3 \cup Z_5 \cup Z_7$, where each component $Z_i$ generically corresponds to squares with $i$ distinct entries. All irreducible components of $Z$ are smooth geometrically connected curves defined over $\mathbb{Q}$ whose degree and genera are given by the following table: 

\begin{center}
   \begin{tabular}{c|c|c|c}
     &  $Z_3$  &  $Z_5$ & $Z_7$  \\
     \hline \#Components & 128 & 32 & 16 \\
     Genus & 0 & 5 & 49 \\
     Degree & 2 & 8 & 32
   \end{tabular}
\end{center}
In particular, $Z$ has 176 irreducible components in total.
\end{prop}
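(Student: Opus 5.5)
The plan is to use the automorphism group $G$ of Theorem~\ref{thm:main3} to reduce to three representative curves: one from each of $Z_3$, $Z_5$, $Z_7$. For each representative I will write down explicit equations, check smoothness and irreducibility over $\Qbar$, and compute degree and genus. The counts then come from orbit computations.

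First, I will confirm the decomposition of $Z$. Equating two entries $X^2=Y^2$ is the union of the hyperplane sections $X=\pm Y$. Under $T\cong D_4$, the pairs of cells of the square fall into a few orbits:
\begin{itemize}
\item edge-adjacent pairs (orbit of $A=B$);
\item pairs at distance two in a row or column, i.e.\ corner--corner or edge--edge (orbit of $A=C$, with $B=H$-type pairs reducing to these via the linear relations);
\item diagonal pairs through the center (orbit of $A=M$ and $C=M$);
\item the remaining knight-move and edge--center pairs.
\end{itemize}
In the last case the magic relations force further equalities and collapse onto the listed loci, as the paper illustrates with $B=\pm M$; I will check each case by substituting into the Gröbner basis. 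This justifies $Z=Z_3\cup Z_5\cup Z_7$.

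Next, I will analyze each representative:
\begin{itemize}
\item \emph{$Z_3$.} The component is cut out by linear equations reducing $V$ to the conic $B^2+C^2=2A^2$ in a $\P^2$. It is smooth of degree $2$ and genus $0$, with the rational point $[1:1:1]$, so it is geometrically irreducible over $\Q$.
\item \emph{$Z_5$.} Take $A=C$, $D=E=M$, $F=H$. What remains is a curve in the $\P^3$ with coordinates $A,B,M,F,G$ modulo relations; concretely, three of these coordinates subject to two quadric relations, e.g.\ $2A^2+B^2=3M^2$ and $A^2+F^2=2M^2$, with $G$ determined. I expect it to be a complete intersection of quadrics in $\P^4$ of degree $8$. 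A smooth complete intersection of three quadrics in $\P^4$ is canonical of genus $5$, matching the table, so I will identify the component as the intersection of three diagonal quadrics in $\P^4$.
\item \emph{$Z_7$.} Take $A=B$, $G=H$. This leaves a curve in $\P^6$ cut out by five diagonal quadrics, of degree $2^5=32$. For a smooth complete intersection of $n-1$ quadrics in $\P^n$, adjunction gives $2g-2=2^{n-1}(n-3)$. With $n=6$ this is $96$, so $g=49$, as claimed.
\end{itemize}
Smoothness in each case follows from the diagonal-quadric Jacobian criterion: a complete intersection $\sum_j a_{ij}x_j^2=0$ is smooth iff every maximal minor of the coefficient matrix $(a_{ij})$ is nonzero, i.e.\ the coefficient columns are in general position. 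Given smoothness, connectedness of positive-dimensional complete intersections yields geometric irreducibility, and a rational point such as $[1:\cdots:1]$ shows the curve is defined over $\Q$.

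Finally, I will count components via the orbits under the sign group $S$ together with $T$:
\begin{itemize}
\item For $Z_7$: there are $8$ edge-adjacent pairs, each with $2$ choices of sign of the hyperplane. The resulting hyperplane sections coincide in pairs, since $A-B$, $G\pm H$ is shared with $G-H$, $A\pm B$. This gives $16$ curves.
\item For $Z_5$: there are $4$ line-type pairs, times $8$ sign components, times sign choices, modulo the same identifications, giving $32$.
\item For $Z_3$: there are $64$ from each diagonal, so $128$.
\end{itemize}
The totals sum to $176$.

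The main obstacle I expect is this bookkeeping of coincidences between components arising from different hyperplanes. It will be resolved by noting that each component is determined by its set of linear equations $X=\pm Y$, and by counting such sign-patterns directly. A second, smaller obstacle is verifying the general-position condition on the minors for the $Z_7$ curve; this is a finite check, reducible by symmetry.
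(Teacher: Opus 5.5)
Your argument is correct, but it takes a genuinely different route from the paper's. The paper proves the proposition by direct machine verification. For each of the $176$ curves it checks in Magma that $\dim_{\Q} H^0(C,\mathscr{O}_C)=1$ and that $C$ is smooth, and it computes degrees and genera the same way.

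You instead observe that each component is, inside a linear subspace of $\P^8$, a complete intersection of diagonal quadrics. This gives you each property conceptually:
\begin{itemize}
\item smoothness becomes the maximal-minor criterion;
\item geometric irreducibility follows from connectedness of complete intersections;
\item degree and genus come from B\'ezout and adjunction ($K=\mathscr{O}(1)$ for three quadrics in $\P^4$, $K=\mathscr{O}(3)$ for five quadrics in $\P^6$).
\end{itemize}
The sign changes and $D_4$ then reduce everything to three representatives.

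The finite check you defer is easy by hand. In both cases the kernel of the coefficient matrix is $2$-dimensional, so all maximal minors are nonzero iff the coordinate functionals on that kernel are pairwise non-proportional. In Lucas coordinates, $x_A=x_B$ means $x=-2y$, leaving the entries $z-y,\,z+2y,\,z+3y,\,z,\,z-3y,\,z-2y,\,z+y$ for $A,C,D,M,E,F,G$. Likewise $x_A=x_C$ means $x=y$, leaving $z-y,\,z+2y,\,z,\,z+y,\,z-2y$ for $A,B,M,F,G$. Both lists are pairwise non-proportional.

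Your approach gives a human-checkable proof that explains why the genera are $0$, $5$, $49$. The paper's approach is uniform: the same Magma routine is reused for the other curves of Section~\ref{sec:divisors}.

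A few slips to fix, none of which affects the argument:
\begin{itemize}
\item The knight-move pair $B=\pm H$ does not reduce to the $A=C$ type but to $Z_3$: subtracting $A^2+H^2=2M^2$ from the top row gives $C^2=M^2$.
\item Your list of pair types omits the diagonally adjacent edge pairs such as $B=\pm D$, which also fall into $Z_3$.
\item The $Z_5$ representative lives in $\P^4$ with coordinates $A,B,M,F,G$, not in $\P^3$. Also, $G$ is fixed only up to sign, by the third quadric $G^2=2A^2-M^2$.
\item In the $Z_7$ count it is the components, not the hyperplane sections, that coincide in pairs: $16$ hyperplanes, $2$ components each, each component counted twice.
\item The curves are defined over $\Q$ because their equations are, not because they contain $[1:\cdots:1]$.
\end{itemize}
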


\begin{proof}
For each of the above components $C$ defined over $\mathbb{Q}$, we verify using Magma~\cite{bosma1997magma} that $\dim_{\mathbb{Q}} H^0(C, \mathscr{O}_C) = 1$. This shows that they are geometrically connected and irreducible.  We also show using Magma that each component is smooth.
\end{proof}

One can check that all components of $Z$ meet in either the 256 trivial rational points or the 256 singular points of $V$.

\begin{remark}
The fact that the components of $Z_5$ and $Z_7$ are of high genus is consistent, via Faltings theorem, with older results to the effect that there are no integer $3 \times 3$ magic squares of squares with 5 or 7 distinct entries, see \cite{boyer_multimagiecom}.    
\end{remark}

All other curves presented in this section have been verified to be geometrically connected and irreducible in a similar fashion as in the proof of Proposition~\ref{components}, so these comments will be omitted in the sequel. 

\subsection{Coordinate Hyperplane Sections}
\label{subsec:coordinate_hyperplane_sections}

Another natural family of hyperplane sections to consider are the coordinate hyperplanes. We start with the case $M = 0$. This corresponds to the square
\[
\magicsquarethree{A^2}{B^2}{C^2}{D^2}{0}{-D^2}{-C^2}{-B^2}{-A^2}
\]
with equations 
\begin{align}
\label{eq:conics_m=0}
\begin{split}
    A^2+ D^2 =&{\,} C^2,\\
     B^2 +2C^2 =&{\,} D^2.
\end{split}
\end{align}
This decomposes into 8 irreducible components over $\mathbb{Q}$, which actually decompose further over $\Qbar$. 

\begin{prop}
The $M=0$ hyperplane section decomposes, over $\Q(i)$, into the union of 16 smooth elliptic curves all isomorphic to $y^2 = x^3 - x$.     
\end{prop}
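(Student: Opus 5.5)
Setting $M=0$ in the original (non-Gr\"obner) equations forces each pair of entries opposite the center to be negatives of each other: $A^2+H^2=0$, $B^2+G^2=0$, $C^2+F^2=0$, $D^2+E^2=0$. The plan is to exploit this to split the section, reduce to a single curve in $\P^3$, and identify that curve explicitly.

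\textbf{Step 1: splitting.} Over $\Q(i)$ each of the four relations factors as $H=\epsilon_1 iA$, $G=\epsilon_2 iB$, $F=\epsilon_3 iC$, $E=\epsilon_4 iD$ with $\epsilon_j\in\{\pm1\}$. This gives $16$ linear subspaces $L_\epsilon\cong\P^3$ with coordinates $[A:B:C:D]$. On each $L_\epsilon$, the remaining row, column and diagonal equations reduce to the two conics \eqref{eq:conics_m=0}:
\[
A^2+D^2=C^2,\qquad B^2+2C^2=D^2 .
\]
So $V\cap\{M=0\}$ is the union of $16$ copies
\[
\{M=0\}\cap L_\epsilon \;\cong\; E_0:=V(A^2+D^2-C^2,\;B^2+2C^2-D^2)\subset\P^3,
\]
each embedded by the projection $[A:\dots:H]\mapsto[A:B:C:D]$. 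Complex conjugation $i\mapsto -i$ swaps $\epsilon\leftrightarrow-\epsilon$, which is consistent with the $8$ components over $\Q$. Alternatively, the sign changes in $S\le\mathrm{Aut}(V)$ act transitively on the $16$ pieces, so it suffices to treat one. I also need to check that distinct $\epsilon$ give distinct curves. Two pieces can only meet where some of $A,B,C,D$ vanish, a finite set, so they are distinct.

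\textbf{Step 2: $E_0$ is smooth of genus one.} A complete intersection of two quadrics in $\P^3$ is a smooth genus-one curve exactly when the binary quartic $\det(sQ_1+tQ_2)$ has distinct roots. Both quadrics are diagonal, $\mathrm{diag}(1,0,-1,1)$ and $\mathrm{diag}(0,1,2,-1)$ in the coordinates $(A,B,C,D)$. The determinant is therefore
\[
s\cdot t\cdot(2t-s)\cdot(s-t),
\]
which has four distinct roots. Hence $E_0$ is a smooth curve of genus $1$. Irreducibility can be checked as in Proposition~\ref{components}.

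\textbf{Step 3: identifying $E_0$.} I would parametrize the first conic from its rational point $[A:C:D]=[0:1:1]$:
\[
(A,D,C)=(2st,\;s^2-t^2,\;s^2+t^2).
\]
Substituting into the second equation gives
\[
B^2=(s^2-t^2)^2-2(s^2+t^2)^2=-(s^4+6s^2t^2+t^4).
\]
So $E_0$ is birational to the quartic model $y^2=-(s^4+6s^2t^2+t^4)$. Over $\Q(i)$, putting $y=iy'$ gives $y'^2=s^4+6s^2+1$ (dehomogenized). For this quartic the classical invariants are $I=12\cdot1\cdot1+6^2=48$ and $J=72\cdot6-2\cdot6^3=0$. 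Its Jacobian is therefore
\[
y^2=x^3-27Ix-27J=x^3-1296x=x^3-36^2x,
\]
which is isomorphic to $y^2=x^3-x$ by $(x,y)\mapsto(36x,216y)$. Since $y'^2=s^4+6s^2+1$ has the rational point $s=0,\ y'=1$, the curve is isomorphic to its Jacobian, and hence $E_0\cong\{y^2=x^3-x\}$ over $\Q(i)$.

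The main obstacle is the sign and twist bookkeeping in Step 3: confirming that the invariant formulas give exactly $x^3-36^2x$ and not a nontrivial quadratic twist. If the invariant-theoretic route is error-prone, I would instead map the point $[A:B:C:D]=[0:i:1:1]$ to infinity and run a direct cubic transformation, checking the result against the computation in Magma.
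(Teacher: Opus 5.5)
Your proof is correct, but it takes a different route from the paper. The paper's proof is purely computational: Magma is used to check that each of the $8$ components over $\Q$ splits over $\Q(i)$ into two disjoint curves isomorphic to $y^2=x^3-x$.

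You instead derive the statement by hand. The relations $A^2+H^2=B^2+G^2=C^2+F^2=D^2+E^2=0$ explain at once why $\Q(i)$ is the right field and why there are exactly $16$ pieces, each projecting isomorphically onto $E_0=V(A^2+D^2-C^2,\,B^2+2C^2-D^2)\subset\P^3$. Complex conjugation pairs $\epsilon$ with $-\epsilon$. Since these differ in every sign, the two conjugate pieces meet only where all coordinates vanish, which recovers the paper's ``disjoint union of two'' over $\Q$.

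A few smaller points:
\begin{itemize}
\item Smoothness from the four distinct roots of $st(2t-s)(s-t)$ is right. Irreducibility then comes for free, since a smooth connected complete intersection is irreducible, so no Magma check is needed there.
\item If you want the statement as an equality of divisors, note that $16\cdot 4=64=\deg V$, so every component has multiplicity one.
\item Your invariants $I=48$, $J=0$ are correct, and the twist worry is moot: $27I=1296=6^4$, and $y^2=x^3-Dx$ depends only on $D$ modulo fourth powers. As a direct check, writing $(y'-s^2)(y'+s^2)=6s^2+1$ leads to $Y^2=(X^2-4)(X+6)$, which becomes $Y^2=Z^3-16Z$ after $X=Z-2$.
\end{itemize}

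What your route buys is transparency. The Pythagorean parametrization in Step~3 is exactly the double-cover-of-a-conic structure the paper exploits afterwards for its biquadratic families. Indeed, your quartic satisfies $s^4+6s^2t^2+t^4=8(m^4+n^4)$ under $s=m+n$, $t=m-n$, matching the paper's $B=\sqrt{-2(m^4+n^4)}$. The paper's computational check, in return, is uniform with how it verifies all its other components and leaves no room for hand-calculation slips.
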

\begin{proof}
Using Magma~\cite{bosma1997magma}, we check that each of the 8 irreducible components over $\Q$ decomposes over $\Q(i)$ into a disjoint union of two elliptic curves, each of which is isomorphic to $y^2 = x^3 - x$.  
\end{proof}

This yields the following new connection with the congruent number problem.

\begin{prop}
   For $d$ a congruent number, the distinct locus $U$ has infinitely many points with $M = 0$ over $\mathbb{Q}(i, \sqrt{d})$. 
\end{prop}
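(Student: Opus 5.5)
The plan is to pass from congruent numbers to rational points on the twisted curve $E_d\colon y^2 = x^3 - d^2x$, transport those points to the curve $E\colon y^2 = x^3 - x$ over $\Q(\sqrt{d})$, and then carry them into one of the components of the $M=0$ hyperplane section using the preceding proposition.

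First I would record the classical input. An integer $d$ is congruent if and only if $E_d(\Q)$ has positive rank, so in particular $E_d(\Q)$ is infinite. The curve $E_d$ is the quadratic twist of $E$ by $d$. Concretely, the map
\[
(x,y) \longmapsto \bigl(x/d,\; y/(d\sqrt{d})\bigr)
\]
is an isomorphism $E_d \to E$ defined over $\Q(\sqrt{d})$. Hence $E(\Q(\sqrt{d}))$, and a fortiori $E(\Q(i,\sqrt{d}))$, is infinite.

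Next I would fix one of the 16 geometric components $C$ of $V \cap \{M=0\}$. By the preceding proposition, $C$ is defined over $\Q(i)$ and is isomorphic to $E$. I need this isomorphism $\phi\colon E \to C$ to be defined over $\Q(i)$ itself, and not merely over $\Qbar$. I would extract $\phi$ explicitly from the Magma computation, checking that $C$ has a $\Q(i)$-rational point to serve as origin and that its $j$-invariant is $1728$ with the right twist class. Alternatively, I would build $\phi$ by hand from \eqref{eq:conics_m=0}. The equation $A^2 + D^2 = C^2$ is a conic with a rational point, so it can be parametrized. Substituting into $B^2 + 2C^2 = D^2$ then gives a genus-one quartic double cover, which should reduce to $y^2 = x^3 - x$ over $\Q(i)$. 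Applying $\phi$ to the infinitely many points of $E(\Q(i,\sqrt{d}))$ gives infinitely many $\Q(i,\sqrt{d})$-points on $C \subset V$, all with $M=0$.

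Finally I must show that all but finitely many of these points lie in the distinct locus $U$. The complement $Z = V \smallsetminus U$ is a finite union of hyperplane sections $\{v \pm w = 0\}$. Since $C$ is an irreducible curve, $C \cap Z$ is finite unless $C$ lies entirely in one of these hyperplanes. So it suffices to exhibit a single point of $C$ with $v^2 \neq w^2$ for every pair of coordinates $v,w$. Equivalently, I need one square of the displayed shape with $A^2, B^2, C^2, D^2$ pairwise distinct, nonzero, and distinct from their negatives. Any non-torsion point produced above should do; otherwise a random point checked by computer suffices. Note that $\pm A^2$ and so on are automatically distinct from the center entry $0$ once the coordinates are nonzero, and the definition of $U$ allows zero entries anyway.

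I expect the main obstacle to be the field of definition of $\phi$. If $C \cong E$ only over a larger field, the twist bookkeeping changes and the field $\Q(i,\sqrt{d})$ may not suffice. So the first thing I would do is verify explicitly that the isomorphism descends to $\Q(i)$.
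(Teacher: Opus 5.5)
Your proposal is correct and is essentially the paper's argument: use the $\Q(\sqrt{d})$-isomorphism between $y^2=x^3-d^2x$ and $y^2=x^3-x$ to get infinitely many $\Q(i,\sqrt{d})$-points on a component of the $M=0$ section, then discard the finitely many lying in the nondistinct locus, since no component is contained in it. Your concern about the field of definition is settled quickly: in the coordinates $(A,B,C,D)$ each component is the curve $A^2+D^2=C^2$, $B^2+2C^2=D^2$, which is birational over $\Q$ to $y^2=-2(m^4+n^4)$ (whose Jacobian is $y^2=x^3-x$) and has the $\Q(i)$-point $(0:i:1:1)$, so it is $\Q(i)$-isomorphic to $y^2=x^3-x$ as the paper asserts.
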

\begin{proof}
For any squarefree integer $d$, the elliptic curves $y^2 = x^3 - d^2x$ are in the same twist family, i.e., $y^2 = x^3 - x$ is isomorphic to $y^2 = x^3 - d^2x$ over $\mathbb{Q}(\sqrt{d})$. The elliptic curve $y^2 = x^3 - d^2x$ has positive rank over $\Q$ if and only if $d$ is a congruent number. Hence, each irreducible component of the $M=0$ locus over $\Q(i)$ has infinitely many rational points over $\Q(i,\sqrt{d})$ whenever $d$ is a congruent number, and all but finitely many of these points cannot be in the nondistinct locus as these curves are not themselves contained in the nondistinct locus.
\end{proof}

We remark that a different connection between $3 \times 3$ magic squares of squares and the congruent number problem has been explored by Robertson~\cite{robertson_1996_magic}, who noted that a magic square of squares can be given by a sequence of points on a congruent number elliptic curve in arithmetic progression. 

\begin{remark}
This result also gives a method of proving that a $3 \times 3$ magic square of squares with $M=0$ and with coefficients in $\mathbb{Z}[i]$ is impossible.  Indeed, the elliptic curve $y^2 = x^3 - x$ has Mordell--Weil group structure $\mathbb{Z}/2\mathbb{Z} \oplus \mathbb{Z}/4\mathbb{Z}$ over $\mathbb{Q}(i)$ \cite{lmfdb}. This corresponds to the orbit of the square
\[
\magicsquarethree{0}{1}{-1}{-1}{0}{1}{1}{-1}{0}
\]
under the automorphism group, which is contained in $Z_3$. This was previously proved by Sallows in a more elementary way, see~\cite{boyer_multimagiecom}. 
\end{remark}

We can construct explicit families of magic squares of squares over biquadratic fields by leveraging the geometry of the elliptic curve components.  In particular, we consider one of the quadrics in the defining equations \eqref{eq:conics_m=0} and project to the $\mathbb{P}^2$ that it spans, which presents each of the elliptic curves as a double cover of a smooth plane conic. 

We start with the case of $A^2 +D^2 = C^2$. This conic is the rational curve parametrizing Pythagorean triples $(A,D,C)$. This, in turn, determines $B = \sqrt{D^2 - 2C^2}$ up to sign, which is contained in an extension of degree at most 2.  The rest of the square is then determined up to signs by adjoining $i$.  Explicitly, by using Euclid's parametrization of Pythagorean triples, we can write
\[
A = m^2 - n^2, D = 2mn, C = m^2 + n^2.
\]
for distinct integers $m$ and $n$. Substituting $B^2 = D^2 - 2C^2$, we then get
\[
B = \sqrt{-2(m^4 + n^4)},
\]
yielding a square
\[
\magicsquarethree{(m^2 - n^2)^2}{-(\sqrt{2(m^4 + n^4)})^2}{ (m^2 + n^2)^2}{(2mn)^2}{0}{-(2mn)^2}{-(m^2 + n^2)^2}{(\sqrt{2(m^4 + n^4) })^2}{-(m^2 - n^2)^2}
\]
over the biquadratic field $\mathbb{Q}(i, \sqrt{2(m^4 + n^4)})$. An example from this family over the number field of smallest discriminant is 
\[
\magicsquarethree{9}{-34}{25}{16}{0}{-16}{-25}{34}{-9}
\]
defined over $\mathbb{Q}(i, \sqrt{34})$, by letting $(m,n) = (2,1)$, corresponding to the Pythagorean triple $(A,D,C) = (3,4,5)$. In fact, this is a different family from Bremner's family mentioned in the introduction 
\[
\magicsquarethree{(q^2 + 1)^2}{-(q^2 + 2q - 1)^2}{(2\sqrt{q^3 - q})^2}{-(q^2 - 2q - 1)^2}{0}{(q^2 - 2q - 1)^2}{-(2\sqrt{q^3 - q})^2}{(q^2 + 2q - 1)^2}{-(q^2 + 1)^2}
\]
over $\mathbb{Q}(i, \sqrt{q^3 - q})$, for a positive integer $q$.

For the other case, the plane curve $B^2 +2C^2= D^2$ is parametrized by
\[
B = m^2 - 2n^2, C = 2mn, D = m^2 + 2n^2
\]
for nonzero integers $m$ and $n$. This determines $A = \sqrt{C^2 - D^2}$ up to sign, specifically,
\[
A = i\sqrt{m^4 + 4n^4}
\]
and the family of squares
\[
\magicsquarethree{-(\sqrt{m^4 + 4n^4})^2}{(m^2 - 2n^2)^2}{(2mn)^2}{(m^2 + 2n^2)^2}{0}{-(m^2 + 2n^2)^2}{-(2mn)^2}{ -(m^2 - 2n^2)^2}{(\sqrt{m^4 + 4n^4 })^2}
\]
over the number field $\mathbb{Q}(i, \sqrt{m^4 + 4n^4})$. An example over the number field of smallest discriminant is
\[
\magicsquarethree{-5}{1}{4}{9}{0}{-9}{-4}{-1}{5}
\]
over $\mathbb{Q}(i, \sqrt{5})$, defined by $(m,n)=(1,1)$, corresponding to the sequence of squares $(B,C,D)$.  In fact, this discriminant is smaller than is achievable for Bremner's family.

Next, we consider the case of setting corner elements to $0$. These curves split over the biquadratic field $\mathbb{Q}(\sqrt{2}, i)$. We will first examine the hyperplane section $A = 0$, as the hyperplane sections $C = 0$, $F = 0$, and $H = 0$ are all in its orbit under the automorphism group. Setting $A = 0$ corresponds to the square
\[
\magicsquarethree{0}{B^2}{C^2}{D^2}{M^2}{-G^2}{F^2}{G^2}{2M^2}
\]
forcing the relations
\begin{align*}
    H =  &{\,}\pm \sqrt{2}M, \\
    E  = &{\,}\pm iG.
\end{align*}
The $A=0$ hyperplane section decomposes over $\mathbb{Q}(i, \sqrt{2})$ into a union of 4 smooth geometrically connected curves of genus 17 and degree 16, each subject to the above relations along with
\begin{align*}
    C^2 - G^2 = &{\,} M^2, &{} B^2 + G^2 = &{\,} 2M^2, \\
    F^2 + G^2 = &{\,}M^2,  &{}  D^2 - G^2 = &{\,} 2M^2.
\end{align*}

Finally, we consider the case of setting elements in the middle of an outer row or column to 0. These curves split over the quadratic field $\mathbb{Q}(\sqrt{2})$. We will only examine the hyperplane section $B = 0$, as the hyperplane sections $D = 0$, $E = 0$, and $G = 0$ are all in its orbit under the automorphism group. Setting $B = 0$, which corresponds to the square
\[
\magicsquarethree{A^2}{0}{C^2}{2H^2}{M^2}{2F^2}{F^2}{2M^2}{H^2}
\]
forces the linear relations
\begin{align*}
    D &{} = \pm \sqrt{2}H, \\
    G &{} = \pm \sqrt{2}M, \\
    E &{} = \pm \sqrt{2}F
\end{align*}
and the quadratic equations
\begin{align*}
    A^2 + H^2 &{} = G^2, \\
    G^2 + 2H^2 &{} = 2C^2, \\
    2F^2 + 2H^2 &{} = G^2, 
\end{align*}
yielding 8 smooth geometrically connected curves of genus 5 and degree 8. In summary, we have the following.

\begin{prop}
Let $Z' \subset V$ denote the locus given by coordinate hyperplane sections. Then $Z'$ decomposes into $Z_M \cup Z_{\mathrm{middle}} \cup Z_{\mathrm{corner}}$, where $Z_M$ corresponds to squares with $M = 0$, $Z_{\mathrm{middle}}$ corresponds to squares with the middle entry of an outer row or column 0, and $Z_{\mathrm{corner}}$ corresponds to squares with corner entries 0. All irreducible components of $Z'$ are smooth geometrically connected curves defined over $\mathbb{Q}(i, \sqrt{2})$ whose degree and genera are given by the following table:
    \begin{center}
        \begin{tabular}{c|c|c|c}
        & $Z_M$ & $Z_{\mathrm{middle}}$ & $Z_{\mathrm{corner}}$ \\
        \hline
        \#Components & 16 & 32 & 16 \\
        Genus & 1 & 5 & 17 \\
        Degree & 4 & 8 & 16 
        \end{tabular}
    \end{center}
    In particular, $Z'$ has 64 geometrically irreducible components in total.
\end{prop}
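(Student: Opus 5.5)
The plan is to treat the three families $Z_M$, $Z_{\mathrm{middle}}$, $Z_{\mathrm{corner}}$ separately, using the automorphism group $G$ of Theorem~\ref{thm:main3} to reduce to one representative hyperplane in each family. The coordinate hyperplanes $\{v=0\}$ fall into three $D_4$-orbits: $\{M\}$, the edge midpoints $\{B,D,E,G\}$, and the corners $\{A,C,F,H\}$. Since $G$ acts by coordinate permutations and sign changes, it permutes the coordinate hyperplanes, and it maps irreducible components of one section isomorphically onto components of another, preserving degree, genus and smoothness. Hence it suffices to analyze $M=0$, $B=0$ and $A=0$, and then multiply component counts by the orbit sizes $1$, $4$, $4$. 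This gives $16\cdot 1$, $8\cdot 4=32$, and $4\cdot 4=16$ components, for a total of $64$.

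For each representative we first derive the linear factorizations by hand.

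\begin{itemize}
\item For $B=0$: the equation $B^2+G^2=2M^2$ gives $G^2=2M^2$. The row and column sums then give $D^2=2H^2$ and $E^2=2F^2$. So the section splits over $\Q(\sqrt2)$ along the sign choices in $D=\pm\sqrt2 H$, $G=\pm\sqrt2 M$, $E=\pm\sqrt2F$, which is $8$ pieces.
\item For $A=0$: we get $H^2=2M^2$ and $E^2=-G^2$, giving $4$ pieces over $\Q(i,\sqrt2)$.
\item For $M=0$: no linear factorization appears, and the $16$ components are exactly the content of the preceding proposition on the $M=0$ section, which we invoke directly.
\end{itemize}

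Each piece of the $A=0$ and $B=0$ sections is cut out, inside a linear subspace $\P^4$ (coordinates, e.g., $C,D,F,G,M$ for $A=0$), by three diagonal quadrics. This is precisely the residual system listed before the statement.

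Next we show that each such piece is a smooth, geometrically irreducible complete intersection of three quadrics in $\P^4$. For a diagonal system $\sum_j a_{ij}x_j^2=0$, smoothness follows from the Jacobian criterion. At a point, the rank of the Jacobian $(2a_{ij}x_j)$ fails to be $3$ only if the nonzero coordinates $x_j$ index columns of the coefficient matrix spanning fewer than three dimensions. So it suffices to check that every $3\times 3$ minor condition forces those coordinates to vanish, i.e.\ that every set of columns of rank $\le 2$ is supported on a set where the remaining equations have no nonzero solution. This is a finite check on a $3\times 5$ matrix. Smooth complete intersections of positive dimension are connected, hence irreducible, and by adjunction they are canonical curves of genus $5$ and degree $8$.

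This computation handles $B=0$ directly. For $A=0$ the listed data claim genus $17$ and degree $16$, so the piece is not a complete intersection in a $\P^4$. The linear relations there only eliminate $H$ and $E$, leaving the six coordinates $B,C,D,F,G,M$ and four quadrics in $\P^5$, which is again a complete intersection curve. Its degree is $2^4=16$, and adjunction gives $K=\mathscr{O}(8-6)=\mathscr{O}(2)$, so $2g-2=32$ and $g=17$. Smoothness is checked by the same Jacobian/minor criterion on the $4\times 6$ coefficient matrix. For $M=0$ we already have genus $1$, and the degree is $64/16=4$, since the hyperplane section has degree $64$ and the $16$ components are permuted transitively by the sign changes together with $\mathrm{Gal}(\Q(i)/\Q)$.

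Two facts remain. First, the pieces must be distinct and exhaust the section: their degrees sum to $64$ in each case ($4\cdot16$ and $8\cdot8$), which equals the degree of the hyperplane section, so there is no missing or multiple component. Second, they are defined over $\Q(i,\sqrt2)$, which is clear from the explicit equations.

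I expect the main obstacle to be the smoothness verification. The minor-by-minor analysis of the diagonal coefficient matrices, especially the $4\times6$ matrix for $A=0$, is tedious. In practice I would confirm it with Magma, as in the proof of Proposition~\ref{components}.
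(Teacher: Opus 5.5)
Your proof is correct. It splits each representative section exactly as the paper does: you reduce by the $D_4$-orbits $\{M\}$, $\{B,D,E,G\}$, $\{A,C,F,H\}$, then factor along $H=\pm\sqrt2M$, $E=\pm iG$ (for $A=0$) and along $D=\pm\sqrt2H$, $G=\pm\sqrt2M$, $E=\pm\sqrt2F$ (for $B=0$).

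Where you differ is the verification step. The paper checks smoothness and $h^0(C,\mathscr{O}_C)=1$ for each piece in Magma, as in Proposition~\ref{components}. You instead observe that each piece is a diagonal complete intersection in a coordinate linear subspace. Then:
\begin{itemize}
\item smoothness reduces to the nonvanishing of all maximal minors of the coefficient matrix;
\item irreducibility follows from connectedness of positive-dimensional complete intersections;
\item degree and genus come from B\'ezout and adjunction.
\end{itemize}
This gives a computer-free proof that also explains the numbers: $2^3=8$ and $g=5$ in $\mathbb{P}^4$, $2^4=16$ and $g=17$ in $\mathbb{P}^5$. The paper's route has the advantage of treating all its curve families uniformly.

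You need not defer the minor check to Magma; it is quick by hand. State the criterion simply as ``all maximal minors are nonzero''; your formulation is equivalent but convoluted. For $A=0$, in coordinates $(B,C,D,F,G,M)$, the coefficient matrix is $[I_4\mid N]$, where $N$ has columns $(1,-1,-1,1)^{T}$ and $(-2,-1,-2,-1)^{T}$. All entries and all $2\times 2$ minors of $N$ are nonzero. For $B=0$, all ten $3\times3$ minors are likewise nonzero.

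There are some slips to fix.

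\emph{The $M=0$ case.} A linear factorization does appear there. Setting $M=0$ forces $H^2=-A^2$, $G^2=-B^2$, $F^2=-C^2$, $E^2=-D^2$. So the section splits along $H=\pm iA$, $G=\pm iB$, $F=\pm iC$, $E=\pm iD$ into $16$ pieces. Each piece is the complete intersection $A^2+D^2=C^2$, $B^2+2C^2=D^2$ in a $\mathbb{P}^3$, which is smooth by the same minor criterion, of degree $4$ and genus $1$. Using this makes the case self-contained. It also replaces your ``$64/16$'' degree argument, which tacitly assumes the hyperplane section is reduced.

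\emph{The $A=0$ case.} Your sentence placing these pieces in a $\mathbb{P}^4$ cut out by three quadrics is wrong, as you later notice. Inferring the correct structure from the claimed genus runs backwards. Derive it directly instead. The magic squares with $A^2=0$ form a $2$-dimensional linear space. It projects isomorphically onto the coordinates $B^2,C^2,D^2,F^2,G^2,M^2$, since $H^2=2M^2$ and $E^2=-G^2$. Hence it is cut out there by four independent relations, that is, by four diagonal quadrics in $\mathbb{P}^5$.

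\emph{Distinctness across an orbit.} The total count of $64$ needs one more line: components coming from different hyperplanes in the same orbit are distinct. For example, a component of $B=0$ is not contained in $D=0$, because on it $D=\pm\sqrt2H$ and $H$ does not vanish identically.
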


\subsection{Hyperplane Sections over Number Fields}
\label{sec:numberfields}
To complete our collection of curves in Section 3, we take many split hyperplane sections over number fields. The next most natural family of hyperplane sections are of the form $A = \pm\sqrt{d} M$ and $B = \pm\sqrt{d} M$ for a square-free positive integer $d$. Indeed, setting such a relation will yield squares of the form
\[
\magicsquarethree{dM^2}{B^2}{C^2}{D^2}{M^2}{E^2}{F^2}{G^2}{(2-d)M^2}
\]
that will give a split hyperplane section over the number field $\mathbb{Q}(\sqrt{d}, \sqrt{2 - d})$ due to also inducing $H = \pm \sqrt{2 - d}M$. As seen in the previous section, the case $d = 2$ is degenerate, yielding $H = 0$ and further splitting of the hyperplane section. The case $d = 3$ is examined below. However, for $d > 3$, these hyperplane sections fail to split further and become linearly dependent; their intersection matrix is rank 1. Thus, we cannot use this method to produce the rest of the Picard lattice. 

However, there are still a few families of other split hyperplane sections over number fields of small degrees (namely $\mathbb{Q}(i, \sqrt3)$ and $\mathbb{Q}(\sqrt{-2})$) that yield new linearly independent classes. Their generation is more procedural than our work in Sections \ref{subsec:nondistinct} and \ref{subsec:coordinate_hyperplane_sections}, as these hyperplane sections do not unearth any intriguing new Diophantine connections. Similarly to the proof of Proposition \ref{components}, all curves were verified to be geometrically connected and irreducible in Magma. 

\subsubsection{Over {$\mathbb{Q}(i, \sqrt{3})$}}

Many hyperplane sections taken over $\mathbb{Q}(i)$ need to be lifted to $\mathbb{Q}(i, \sqrt{3})$ for their components to be fully seen. We will outline these here. Up to automorphism, there are two types of hyperplane sections we will need to consider:

Firstly, $B = \pm iC$ induces degree 8, genus 5 curves which correspond to the square
\[
\magicsquarethree{A^2}{B^2}{-B^2}{D^2}{\frac{1}{3}A^2}{E^2}{-D^2}{G^2}{-\frac{1}{3}A^2}
\]
There are 16 of these upon accounting for all possible signs. These can be permuted in 4 ways via the action of the dihedral group, giving us 64 divisors of this type.

Secondly, $A = \pm iC$ induces degree 16, genus 17 curves which correspond to the square
\[
\magicsquarethree{A^2}{-3G^2}{-A^2}{D^2}{-G^2}{E^2}{F^2}{G^2}{H^2}
\]
There are 8 of these upon accounting for all possible signs. These can be permuted in 4 ways via the action of the dihedral group as it is invariant under the reflection, giving us 32 divisors of this type and 96 in total.

\subsubsection{Over {$\mathbb{Q}(\sqrt{-2})$}}

Up to automorphism, there are two classes of hyperplane sections over $\mathbb{Q}(\sqrt{-2})$. We outline them here:

Firstly, $A = \pm \sqrt{-2}M$ induces degree 32, genus 49 curves corresponding to the square
\[
\magicsquarethree{A^2}{B^2}{C^2}{D^2}{-\frac{1}{2}A^2}{E^2}{F^2}{G^2}{-2A^2}
\]
There are 4 of these upon accounting for all possible signs. These can be permuted in 4 ways via the action of the dihedral group, giving us 16 divisors of this type.

Secondly, $B = \pm \sqrt{-2}M$ induces degree 8, genus 5 curves which correspond to the square
\[
\magicsquarethree{A^2}{B^2}{C^2}{D^2}{-\frac{1}{2}B^2}{E^2}{-\frac{1}{2}D^2}{-2B^2}{-\frac{1}{2}E^2}
\]
There are 16 of these upon accounting for all possible signs. These can be permuted in 4 ways via the action of the dihedral group, as it is invariant under the reflection, giving us 64 divisors of this type and 80 in total. 

In summary, we have the following:

\begin{prop}
    Let $Z'' \subseteq V$ denote the locus given by hyperplane sections over $\mathbb{Q}(i, \sqrt3)$ and $\mathbb{Q}(\sqrt{-2})$. Then $Z''$ decomposes as $Z_{A, C} \cup Z_{B, C} \cup Z_{A, M} \cup Z_{B, M}$, where $Z_{A, C}$ and $Z_{B, C}$ correspond to squares with $A^2 = -C^2$ and $B^2 = -C^2$ respectively, and $Z_{A, M}$ and $Z_{B, M}$ correspond to squares with $A^2 = -2M^2$ and $B^2 = -2M^2$ respectively. All irreducible components of $Z''$ are smooth geometrically connected curves defined over $\mathbb{Q}(i, \sqrt3)$ or $\mathbb{Q}(\sqrt{-2})$, whose degree and genera are given by the following table:

\begin{center}
        \begin{tabular}{c|c|c|c|c}
        & $Z_{A, C}$ & $Z_{B, C}$ & $Z_{A, M}$ & $Z_{B, M}$ \\
        \hline
        \#Components & 32 & 64 & 16 & 64\\
        Genus & 17 & 5 & 49 & 5 \\
        Degree & 16 & 8 & 32 & 8
        \end{tabular}
    \end{center}
    In particular, $Z''$ has 176 geometrically irreducible components in total.
\end{prop}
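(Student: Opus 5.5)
The statement is a bookkeeping result summarizing the four families just described, so the proof combines explicit equations, the automorphism group $G$ from Theorem~\ref{thm:main3}, and the same Magma verification used for Proposition~\ref{components}.

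\textbf{Step 1: representatives.} Fix one representative hyperplane in each family: $B = iC$, $A = iC$, $A = \sqrt{-2}M$, $B = \sqrt{-2}M$. For each one, substitute into the Gr\"obner basis equations and derive the forced relations.
\begin{itemize}
\item For $B=iC$: from $B^2+G^2=2M^2$ and $C^2+F^2=2M^2$ we get $G^2 = -F^2$. The remaining equations force $A^2=3M^2$ and $H^2=-3M^2$, up to renaming. This is where $\sqrt{3}$ enters, giving the displayed square.
\item For $A=\sqrt{-2}M$: the relation $A^2+H^2=2M^2$ gives $H^2=4M^2$, which produces the displayed pattern.
\end{itemize}
These substitutions reduce each hyperplane section to a union of sign-choice linear subspaces, each intersected with a residual complete intersection of quadrics in a smaller projective space.

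\textbf{Step 2: verification of each component.} For one component in each family, check in Magma over the stated field ($\mathbb{Q}(i,\sqrt3)$ or $\mathbb{Q}(\sqrt{-2})$) that:
\begin{itemize}
\item it is smooth;
\item $\dim H^0(C,\mathscr{O}_C)=1$, which gives geometric connectedness and irreducibility as in the proof of Proposition~\ref{components};
\item its degree and genus are as tabulated. The degree can be read off from the Hilbert polynomial. As a cross-check, the genus of a smooth complete intersection of $k$ quadrics in $\mathbb{P}^{k+1}$ follows from adjunction: degree $2^k$ and $2g-2 = 2^k(2k-k-2)$. This gives $(8,5)$, $(16,17)$, $(32,49)$ for $k=3,4,5$, matching the table.
\end{itemize}

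\textbf{Step 3: extend to all components.} Since $G$ acts on $V$ by automorphisms defined over $\mathbb{Q}$, all these properties transfer along orbits. The components in each family then form unions of $G$-orbits, or of orbits under $G$ combined with the Galois conjugation $\sqrt{d}\mapsto-\sqrt{d}$, which also preserves smoothness, degree and genus.

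\textbf{Step 4: counting.} The remaining work is to count distinct components: $32, 64, 16, 64$, for a total of $176$.
\begin{itemize}
\item Count the sign choices in the forced linear relations, modulo the global sign $\mu_2$, to get $16, 8, 4, 16$.
\item Multiply by the number of distinct dihedral images of the hyperplane, which is $4$ in each case.
\item Check there is no double counting. Compare the linear spans of components: two components coincide only if they lie in the same linear subspace of $\mathbb{P}^8$. Distinct sign patterns or distinct $D_4$-images give different sets of linear equations, hence different spans.
\end{itemize}

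\textbf{Main obstacle.} Step 4 is where I expect the difficulty. Stabilizers under the reflection, such as the invariance noted for $A=\pm iC$, could cause overcounting. To handle this, I would compute the $G$-orbit of one representative component's defining ideal directly and count its size. This gives the counts rigorously.
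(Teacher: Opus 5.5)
Your overall route matches the paper's. The paper also records the forced linear relations for one hyperplane in each family, counts sign choices times dihedral images, and relies on Magma (as in Proposition~\ref{components}) for smoothness, geometric irreducibility, degree and genus. Your adjunction cross-check $2g-2=2^k(k-2)$, with linear spans $\mathbb{P}^4,\mathbb{P}^5,\mathbb{P}^6,\mathbb{P}^4$, is correct.

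\textbf{Where Step 4 fails.} The explicit counting argument is false for $Z_{B,C}$. The quadric $B^2+C^2$ has eight $D_4$-images, one for each edge/adjacent-corner pair, not four. Moreover, distinct images need not give distinct components. The form $B^2+C^2-D^2-F^2=(A^2+B^2+C^2-3M^2)-(A^2+D^2+F^2-3M^2)$ lies in the ideal of $V$, so $V\cap\{B^2+C^2=0\}=V\cap\{D^2+F^2=0\}$; these two sections are swapped by the transpose of the square. Similarly $\{A,B\}\sim\{E,H\}$, $\{A,D\}\sim\{G,H\}$ and $\{C,E\}\sim\{F,G\}$; in Lucas coordinates these pair sums are $2z\pm x$ and $2z\pm y$. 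So your criterion that distinct $D_4$-images give different linear spans fails. Applied honestly, your rule gives $8\cdot 16=128$; the true count $64$ arises because the eight images collapse in pairs.

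\textbf{Errors in Step 1.} The miscomputation here hides the coincidence. From $B^2=-C^2$, subtracting $C^2+F^2=2M^2$ from $B^2+G^2=2M^2$ gives $G^2-F^2=2C^2$, not $G^2=-F^2$. The forced relations are $A^2=3M^2$, $H^2=-M^2$ (not $-3M^2$) and $F^2=-D^2$, as in the paper's displayed square. The relation $F=\pm iD$ you missed is exactly what places each component in both sections. Also, no quotient by the global sign occurs: $-1$ preserves every linear relation, and $16,8,4,16$ are simply $2^4,2^3,2^2,2^4$.

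\textbf{How to repair it.} One option is to record the pairing explicitly: there are four distinct loci, each containing $16$ components. The other is to make your fallback the actual argument by computing the $G$-orbit of one component per family. First note that sign changes act transitively on the components of each representative section. For example, $\sigma_C,\sigma_A,\sigma_H,\sigma_F$ flip the four signs in $C=\pm iB$, $A=\pm\sqrt{3}M$, $H=\pm iM$, $F=\pm iD$. The stabilizers in $G$ have orders $64,32,128,32$ for $Z_{A,C},Z_{B,C},Z_{A,M},Z_{B,M}$. For $Z_{B,C}$ the stabilizer consists of sixteen sign changes together with the transpose. This gives orbits $32,64,16,64$ and the total $176$.
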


\section{Magic Surfaces}
\label{sec:magic-surfaces}
A related problem to finding a $3 \times 3$ magic square of squares is finding a magic square with as many square entries as possible \cite{boyer_multimagiecom}. For instance, there is a monetary bounty placed on producing a square of 7 squares different from the only known example
\begin{equation}
\label{eq:7squares}
\magicsquarethree{373^2}{289^2}{565^2}{360721}{425^2}{23^2}{205^2}{527^2}{222121}
\end{equation}
In this section, we study a number of projections from $V$ to del Pezzo and K3 surfaces, motivated by studying magic squares with a prescribed number of square entries. In the process, we will construct 532 new divisors on $V$ as preimages of lines on these surfaces, aiding in the proof of Theorem~\ref{thm:picard-rank}. We will use the following definition.
\begin{defi}
A surface $S$ defined over $\mathbb{Q}$ is \linedef{magic} if it admits a dominant rational map ${V \dashrightarrow S}$.
\end{defi}

The magic surfaces we will consider arise as resolutions of images of coordinate projections from $V$ to various projective spaces. In particular, we will consider resolutions which are del Pezzo surfaces and K3 surfaces of high Picard rank.

\subsection{Magic del Pezzo Surfaces}
\label{subsec:magicdPs}
We begin with magic squares with 5 square entries, which we study by considering the restriction to $V$ of coordinate projections $\mathbb{P}^8 \dashrightarrow \mathbb{P}^4$.  Since the center of each projection is disjoint from $V$, we obtain morphisms $V \to \mathbb{P}^4$.  

We will show that the image of each projection is an intersection of 2 quadrics in $\mathbb{P}^4$, i.e., a magic quartic del Pezzo surface, which is possibly singular.  We further classify these quartic del Pezzo surfaces over $\Qbar$ via invariant theory.

We now recall the classical description of the coarse moduli space of (mildly singular) quartic del Pezzo surfaces, see \cite[Section~8.6.1]{dolgachev2012classical}. To each intersection of two quadrics $Q_1 \cap Q_2 \subset \P^4$ there is an associated pencil of quadrics, whose total space is the bidegree $(1,2)$ hypersurface $Q \subset \P^1 \times \P^4$ defined by $q = u q_1 + v q_2$, where $Q_i=V(q_i)$ and $(u:v)$ is a set of homogeneous coordinates on $\P^1$.  The projection $Q \to \P^1$ is a quadric threefold bundle whose discriminant has degree 5, cut out by the determinant of the Gram matrix of $q$ over $\Q[u,v]$, which is a symmetric $5\times 5$ matrix of linear forms on $\P^1$.  The vector of geometric elementary divisor data of this matrix, which encodes the multiplicities of geometric points of the discriminant and the coranks of the associated singular quadrics, is called the \linedef{Segre symbol}.  The complete intersection is smooth if and only if the discriminant is reduced and it has at most $A_1$-singularities if all points of the discriminant have multiplicity $\leq 2$.  Specifically, the complete intersection: is smooth if and only if the Segre symbol is $[1,1,1,1,1]$; has two $A_1$-singularities connected by a line if and only if the discriminant has a single point of multiplicity 2 and the associated quadric has corank 2 there, equivalently, the Segre symbol is $[(1,1),1,1,1]$; it has four $A_1$-singularities if and only if the discriminant has a pair of points of multiplicity 2 and the associated quadrics have corank 2 there, equivalently, the Segre symbol is $[(1,1),(1,1),1]$.  A classical result of Weierstrass and Segre is that the Segre symbol, together with the $\mathrm{PGL}_2(\Qbar)$-orbit of the multiset of the five geometric points of the discriminant, counted with multiplicity (which we call the \linedef{discriminant configuration}), completely determines the $\Qbar$-isomorphism class of a (possibly singular) quartic del Pezzo surface, see \cite[Section~2]{avritzer2000pencils} or \cite[Theorem~8.6.3]{dolgachev2012classical} for modern accounts of this fact.  

We can reduce the 126 coordinate linear projections $V \to \P^4$ by only considering orbits under the automorphism group.  These are in bijection with orbits of 5 entries of a $3 \times 3$ grid under the natural action of the dihedral group. Coordinate projections corresponding to the same orbit will yield restrictions to $V$ that differ by automorphisms.  By Burnside's lemma, there are 23 orbits, with representatives given in Figure~\ref{fig:5}.  

\begin{figure}
\noindent 
\begin{minipage}[t]{0.22\textwidth}
  \centering
  \begin{tikzpicture}[scale=0.5]
    \fill[black] (0,2) rectangle (1,3);
    \fill[black] (1,2) rectangle (2,3);
    \fill[black] (2,2) rectangle (3,3);
    \fill[black] (0,1) rectangle (1,2);
    \fill[black] (1,1) rectangle (2,2);
    \draw[thick] (0,0) grid (3,3);
  \end{tikzpicture}
  \\[2pt] \small 1 (8)
\end{minipage}%
\hfill
\begin{minipage}[t]{0.22\textwidth}
  \centering
  \begin{tikzpicture}[scale=0.5]
    \fill[black] (0,2) rectangle (1,3);
    \fill[black] (1,2) rectangle (2,3);
    \fill[black] (2,2) rectangle (3,3);
    \fill[black] (0,1) rectangle (1,2);
    \fill[black] (2,1) rectangle (3,2);
    \draw[thick] (0,0) grid (3,3);
  \end{tikzpicture}
  \\[2pt] \small 2 (4)
\end{minipage}%
\hfill
\begin{minipage}[t]{0.22\textwidth}
  \centering
  \begin{tikzpicture}[scale=0.5]
    \fill[black] (0,2) rectangle (1,3);
    \fill[black] (1,2) rectangle (2,3);
    \fill[black] (2,2) rectangle (3,3);
    \fill[black] (0,1) rectangle (1,2);
    \fill[black] (0,0) rectangle (1,1);
    \draw[thick] (0,0) grid (3,3);
  \end{tikzpicture}
  \\[2pt] \small 3 (4)
\end{minipage}%
\hfill
\begin{minipage}[t]{0.22\textwidth}
  \centering
  \begin{tikzpicture}[scale=0.5]
    \fill[black] (0,2) rectangle (1,3);
    \fill[black] (1,2) rectangle (2,3);
    \fill[black] (2,2) rectangle (3,3);
    \fill[black] (0,1) rectangle (1,2);
    \fill[black] (1,0) rectangle (2,1);
    \draw[thick] (0,0) grid (3,3);
  \end{tikzpicture}
  \\[2pt] \small 4 (8)
\end{minipage}%

\vspace{12pt}

\noindent
\begin{minipage}[t]{0.22\textwidth}
  \centering
  \begin{tikzpicture}[scale=0.5]
    \fill[black] (0,2) rectangle (1,3);
    \fill[black] (1,2) rectangle (2,3);
    \fill[black] (2,2) rectangle (3,3);
    \fill[black] (0,1) rectangle (1,2);
    \fill[black] (2,0) rectangle (3,1);
    \draw[thick] (0,0) grid (3,3);
  \end{tikzpicture}
  \\[2pt] \small 5 (8)
\end{minipage}%
\hfill
\begin{minipage}[t]{0.22\textwidth}
  \centering
  \begin{tikzpicture}[scale=0.5]
    \fill[black] (0,2) rectangle (1,3);
    \fill[black] (1,2) rectangle (2,3);
    \fill[black] (2,2) rectangle (3,3);
    \fill[black] (1,1) rectangle (2,2);
    \fill[black] (0,0) rectangle (1,1);
    \draw[thick] (0,0) grid (3,3);
  \end{tikzpicture}
  \\[2pt] \small 6 (8)
\end{minipage}%
\hfill
\begin{minipage}[t]{0.22\textwidth}
  \centering
  \begin{tikzpicture}[scale=0.5]
    \fill[black] (0,2) rectangle (1,3);
    \fill[black] (1,2) rectangle (2,3);
    \fill[black] (2,2) rectangle (3,3);
    \fill[black] (1,1) rectangle (2,2);
    \fill[black] (1,0) rectangle (2,1);
    \draw[thick] (0,0) grid (3,3);
  \end{tikzpicture}
  \\[2pt] \small 7 (4)
\end{minipage}%
\hfill
\begin{minipage}[t]{0.22\textwidth}
  \centering
  \begin{tikzpicture}[scale=0.5]
    \fill[black] (0,2) rectangle (1,3);
    \fill[black] (1,2) rectangle (2,3);
    \fill[black] (2,2) rectangle (3,3);
    \fill[black] (0,0) rectangle (1,1);
    \fill[black] (1,0) rectangle (2,1);
    \draw[thick] (0,0) grid (3,3);
  \end{tikzpicture}
  \\[2pt] \small 8 (8)
\end{minipage}%

\vspace{12pt}

\noindent
\begin{minipage}[t]{0.22\textwidth}
  \centering
  \begin{tikzpicture}[scale=0.5]
    \fill[black] (0,2) rectangle (1,3);
    \fill[black] (1,2) rectangle (2,3);
    \fill[black] (2,2) rectangle (3,3);
    \fill[black] (0,0) rectangle (1,1);
    \fill[black] (2,0) rectangle (3,1);
    \draw[thick] (0,0) grid (3,3);
  \end{tikzpicture}
  \\[2pt] \small 9 (4)
\end{minipage}%
\hfill
\begin{minipage}[t]{0.22\textwidth}
  \centering
  \begin{tikzpicture}[scale=0.5]
    \fill[black] (0,2) rectangle (1,3);
    \fill[black] (1,2) rectangle (2,3);
    \fill[black] (0,1) rectangle (1,2);
    \fill[black] (1,1) rectangle (2,2);
    \fill[black] (2,1) rectangle (3,2);
    \draw[thick] (0,0) grid (3,3);
  \end{tikzpicture}
  \\[2pt] \small 10 (8)
\end{minipage}%
\hfill
\begin{minipage}[t]{0.22\textwidth}
  \centering
  \begin{tikzpicture}[scale=0.5]
    \fill[black] (0,2) rectangle (1,3);
    \fill[black] (1,2) rectangle (2,3);
    \fill[black] (0,1) rectangle (1,2);
    \fill[black] (1,1) rectangle (2,2);
    \fill[black] (2,0) rectangle (3,1);
    \draw[thick] (0,0) grid (3,3);
  \end{tikzpicture}
  \\[2pt] \small 11 (4)
\end{minipage}%
\hfill
\begin{minipage}[t]{0.22\textwidth}
  \centering
  \begin{tikzpicture}[scale=0.5]
    \fill[black] (0,2) rectangle (1,3);
    \fill[black] (1,2) rectangle (2,3);
    \fill[black] (0,1) rectangle (1,2);
    \fill[black] (2,1) rectangle (3,2);
    \fill[black] (1,0) rectangle (2,1);
    \draw[thick] (0,0) grid (3,3);
  \end{tikzpicture}
  \\[2pt] \small 12 (4)
\end{minipage}%

\vspace{12pt}

\noindent
\begin{minipage}[t]{0.22\textwidth}
  \centering
  \begin{tikzpicture}[scale=0.5]
    \fill[black] (0,2) rectangle (1,3);
    \fill[black] (1,2) rectangle (2,3);
    \fill[black] (0,1) rectangle (1,2);
    \fill[black] (2,1) rectangle (3,2);
    \fill[black] (2,0) rectangle (3,1);
    \draw[thick] (0,0) grid (3,3);
  \end{tikzpicture}
  \\[2pt] \small 13 (8)
\end{minipage}%
\hfill
\begin{minipage}[t]{0.22\textwidth}
  \centering
  \begin{tikzpicture}[scale=0.5]
    \fill[black] (0,2) rectangle (1,3);
    \fill[black] (1,2) rectangle (2,3);
    \fill[black] (1,1) rectangle (2,2);
    \fill[black] (2,1) rectangle (3,2);
    \fill[black] (0,0) rectangle (1,1);
    \draw[thick] (0,0) grid (3,3);
  \end{tikzpicture}
  \\[2pt] \small 14 (8)
\end{minipage}%
\hfill
\begin{minipage}[t]{0.22\textwidth}
  \centering
  \begin{tikzpicture}[scale=0.5]
    \fill[black] (0,2) rectangle (1,3);
    \fill[black] (1,2) rectangle (2,3);
    \fill[black] (1,1) rectangle (2,2);
    \fill[black] (2,1) rectangle (3,2);
    \fill[black] (1,0) rectangle (2,1);
    \draw[thick] (0,0) grid (3,3);
  \end{tikzpicture}
  \\[2pt] \small 15 (8)
\end{minipage}%
\hfill
\begin{minipage}[t]{0.22\textwidth}
  \centering
  \begin{tikzpicture}[scale=0.5]
    \fill[black] (0,2) rectangle (1,3);
    \fill[black] (1,2) rectangle (2,3);
    \fill[black] (1,1) rectangle (2,2);
    \fill[black] (2,1) rectangle (3,2);
    \fill[black] (2,0) rectangle (3,1);
    \draw[thick] (0,0) grid (3,3);
  \end{tikzpicture}
  \\[2pt] \small 16 (4)
\end{minipage}%

\vspace{12pt}

\noindent
\begin{minipage}[t]{0.22\textwidth}
  \centering
  \begin{tikzpicture}[scale=0.5]
    \fill[black] (0,2) rectangle (1,3);
    \fill[black] (1,2) rectangle (2,3);
    \fill[black] (1,1) rectangle (2,2);
    \fill[black] (0,0) rectangle (1,1);
    \fill[black] (1,0) rectangle (2,1);
    \draw[thick] (0,0) grid (3,3);
  \end{tikzpicture}
  \\[2pt] \small 17 (4)
\end{minipage}%
\hfill
\begin{minipage}[t]{0.22\textwidth}
  \centering
  \begin{tikzpicture}[scale=0.5]
    \fill[black] (0,2) rectangle (1,3);
    \fill[black] (1,2) rectangle (2,3);
    \fill[black] (1,1) rectangle (2,2);
    \fill[black] (0,0) rectangle (1,1);
    \fill[black] (2,0) rectangle (3,1);
    \draw[thick] (0,0) grid (3,3);
  \end{tikzpicture}
  \\[2pt] \small 18 (8)
\end{minipage}%
\hfill
\begin{minipage}[t]{0.22\textwidth}
  \centering
  \begin{tikzpicture}[scale=0.5]
    \fill[black] (0,2) rectangle (1,3);
    \fill[black] (1,2) rectangle (2,3);
    \fill[black] (1,1) rectangle (2,2);
    \fill[black] (1,0) rectangle (2,1);
    \fill[black] (2,0) rectangle (3,1);
    \draw[thick] (0,0) grid (3,3);
  \end{tikzpicture}
  \\[2pt] \small 19 (4)
\end{minipage}%
\hfill
\begin{minipage}[t]{0.22\textwidth}
  \centering
  \begin{tikzpicture}[scale=0.5]
    \fill[black] (0,2) rectangle (1,3);
    \fill[black] (1,2) rectangle (2,3);
    \fill[black] (2,1) rectangle (3,2);
    \fill[black] (0,0) rectangle (1,1);
    \fill[black] (1,0) rectangle (2,1);
    \draw[thick] (0,0) grid (3,3);
  \end{tikzpicture}
  \\[2pt] \small 20 (4)
\end{minipage}%

\vspace{12pt}

\noindent
\begin{minipage}[t]{0.22\textwidth}
  \centering
  \begin{tikzpicture}[scale=0.5]
    \fill[black] (0,2) rectangle (1,3);
    \fill[black] (1,2) rectangle (2,3);
    \fill[black] (2,1) rectangle (3,2);
    \fill[black] (0,0) rectangle (1,1);
    \fill[black] (2,0) rectangle (3,1);
    \draw[thick] (0,0) grid (3,3);
  \end{tikzpicture}
  \\[2pt] \small 21 (4)
\end{minipage}%
\hfill
\begin{minipage}[t]{0.22\textwidth}
  \centering
  \begin{tikzpicture}[scale=0.5]
    \fill[black] (0,2) rectangle (1,3);
    \fill[black] (2,2) rectangle (3,3);
    \fill[black] (1,1) rectangle (2,2);
    \fill[black] (0,0) rectangle (1,1);
    \fill[black] (2,0) rectangle (3,1);
    \draw[thick] (0,0) grid (3,3);
  \end{tikzpicture}
  \\[2pt] \small 22 (1)
\end{minipage}%
\hfill
\begin{minipage}[t]{0.22\textwidth}
  \centering
  \begin{tikzpicture}[scale=0.5]
    \fill[black] (1,2) rectangle (2,3);
    \fill[black] (0,1) rectangle (1,2);
    \fill[black] (1,1) rectangle (2,2);
    \fill[black] (2,1) rectangle (3,2);
    \fill[black] (1,0) rectangle (2,1);
    \draw[thick] (0,0) grid (3,3);
  \end{tikzpicture}
  \\[2pt] \small 23 (1)
\end{minipage}%
\caption{The orbits of 5 elements of a $3 \times 3$ grid under the action of the dihedral group $D_4$, corresponding to orbits of coordinate projections $\mathbb{P}^8 \dashrightarrow \mathbb{P}^4$ under the automorphism group.  Labeled with orbit number (orbit length).}
\label{fig:5}
\end{figure}

\begin{prop}
The image of each coordinate projection $p\colon V \to \mathbb{P}^4$ is an intersection of two quadrics, whose associated pencil has discriminant supported on $\Q$-rational points.  Of the 23 orbits of possible projections, there are 5 geometric isomorphism classes of magic quartic del Pezzo surface (summarized in Table~\ref{tab:del-pezzo}).
\end{prop}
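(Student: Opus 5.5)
Since the six defining quadrics of $V$ are diagonal, a coordinate projection to five coordinates $S=\{v_1,\dots,v_5\}$ has image cut out by eliminating the four complementary variables. In the squared variables $a_v = v^2$, $V$ is the preimage under the squaring map $\mathbb{P}^8\to\mathbb{P}^8$ of the plane $\Pi=\mathbb{P}(W^\perp)\cong\mathbb{P}^2$ of magic squares, parametrized linearly by the Lucas coordinates $(x,y,z)$. Projecting to $S$ corresponds to restricting to the five linear forms $\ell_{v}(x,y,z)$, $v\in S$. I will first observe that these span the $3$-dimensional dual space, i.e.\ the five Gale points $[1:y_v:z_v]$ in the $3\times3$ grid are not all collinear. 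This holds for every $5$-subset, since a collinear set in the grid has at most $3$ points. Hence the five forms satisfy exactly $5-3=2$ independent linear relations $\sum_{v\in S}\lambda^{(k)}_v \ell_v=0$ for $k=1,2$. The image of $V$ is therefore contained in $Q_1\cap Q_2$ with $q_k=\sum_{v\in S}\lambda_v^{(k)}v^2$. Conversely, the map $(\ell_v)_{v\in S}\colon \Pi\to \mathbb{P}^4$ is a linear embedding onto the plane $\{\sum\lambda^{(k)}_v a_v=0\}$. For any point of $Q_1\cap Q_2$ we then recover $(x,y,z)$ and can choose square roots for the remaining four coordinates. So $p(V)=Q_1\cap Q_2$, a surface of degree $4$, and $V\to p(V)$ is the degree-$16$ map given by the four remaining sign choices. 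The center of projection misses $V$, because no point of the plane $\Pi$ has all five $\ell_v$, $v\in S$, vanishing.

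For the discriminant claim, the pencil $uq_1+vq_2$ is diagonal, so its Gram determinant is
\[
\prod_{v\in S}\bigl(u\lambda^{(1)}_v+v\lambda^{(2)}_v\bigr),
\]
a product of linear forms with rational coefficients, because the magic-square relations are defined over $\Q$. Hence the discriminant is supported on the $\Q$-rational points $[-\lambda^{(2)}_v:\lambda^{(1)}_v]$. By the Gale-duality dictionary from the proof of Theorem~\ref{thm:main3}, the point of $\mathbb{P}^1$ attached to $v$ is exactly the image of the Gale point of $v$ under projection from the remaining data. Two indices $v,w\in S$ give the same root precisely when $(\lambda_v^{(1)},\lambda_v^{(2)})$ and $(\lambda_w^{(1)},\lambda_w^{(2)})$ are proportional. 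Equivalently, the other three points of $S$ together with $v$ and $w$ satisfy a degenerate relation, which in the grid means that the three points of $S\smallsetminus\{v,w\}$ are collinear. A root of multiplicity $m$ then gives corank $m$, since the quadric in the pencil at that root kills $m$ diagonal coordinates. The Segre symbol is therefore read off from how many pairs (or larger blocks) share a root, i.e.\ from how many lines of the grid lie inside $S$. This yields symbols of the form $[1,1,1,1,1]$, $[(1,1),1,1,1]$, $[(1,1),(1,1),1]$, and possibly triples such as $[(1,1,1),1,1]$ when three coordinates are proportional.

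Finally, for each of the $23$ orbit representatives in Figure~\ref{fig:5}, I will write the two relations explicitly using the Lucas parametrization \eqref{eq:Lucas_param} and compute the five roots in $\mathbb{P}^1(\Q)$ with multiplicities and coranks. I will then apply the Weierstrass--Segre classification: the Segre symbol plus the $\mathrm{PGL}_2$-orbit of the root configuration determines the $\Qbar$-isomorphism class. For five reduced points this orbit is detected by cross-ratios, i.e.\ by $j$-type invariants of the binary quintic. For a double root together with three simple roots, all configurations are equivalent, and likewise for two double roots plus one simple root. So only the smooth cases need cross-ratio comparison.

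The main obstacle is this last bookkeeping step: getting the multiplicities and coranks right in each of the $23$ cases and comparing the cross-ratio classes of the smooth cases up to the $S_5$ reordering. I will organize the cases by the number of grid lines contained in $S$ (zero, one, or two), and then compare the smooth quintics by computing the $\mathrm{PGL}_2$-invariants of each root set, which are rational numbers here, ideally verifying with Magma. The result should collapse to exactly $5$ geometric classes, recorded in Table~\ref{tab:del-pezzo}.
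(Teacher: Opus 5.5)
\textbf{Verdict.} Your treatment of the first two assertions is correct, but there is a genuine gap in the final classification step.

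\textbf{The parts that work.} Your route to the first two assertions is more uniform than the paper's. The paper computes, orbit by orbit, that the combinations $c\Lambda$ with $c\Lambda_{T^c}=0$ form a two-dimensional space. It then gets $p(V)=Q_1\cap Q_2$ from disjointness of the center plus irreducibility of $Q_1\cap Q_2$. You instead get the dimension count for all projections at once from the fact that at most three grid points are collinear, and you prove surjectivity directly by lifting through the plane $\Pi$. Your Gale-minor argument also gives a clean rule for the Segre symbol: count the grid lines contained in $S$. The triple-root case you leave open cannot occur, since three coinciding roots would put all five points of $S$ on one line.

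\textbf{The gap.} It is not true that all configurations consisting of one double root and three simple roots are $\mathrm{PGL}_2$-equivalent. They are four distinct points of $\mathbb{P}^1$ with one point distinguished, and such configurations have a modulus: the cross-ratio up to the anharmonic $S_3$-action, equivalently its $j$-invariant. Only the case of two double roots and one simple root (three distinct points) is rigid, by $3$-transitivity.

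This modulus is exactly what separates three of the five classes in Table~\ref{tab:del-pezzo}. The configurations $\{-1,0,0,\tfrac12,\infty\}$, $\{0,0,\tfrac12,2,\infty\}$ and $\{-1,0,0,1,\infty\}$ all have Segre symbol $[(1,1),1,1,1]$. Send the double root to $0$ and two of the simple roots to $1$ and $\infty$; the third simple root can then be placed at $-\tfrac12$, $\tfrac14$ and $-1$ respectively. The corresponding $j$-values are $\tfrac{21952}{9}$, $\tfrac{35152}{9}$ and $1728$, so the three configurations are pairwise non-isomorphic.

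Carried out as you describe, your argument would merge these into one class and return three classes rather than five. The fix is to compute this $S_3$-invariant for every orbit whose $S$ contains exactly one grid line, not only for the smooth orbits.

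\textbf{A side remark on orbit $22$.} Your criterion shows that orbit $22$, i.e.\ $S=\{A,C,M,F,H\}$ with relations $A^2+H^2=2M^2$ and $C^2+F^2=2M^2$, contains two grid lines. It therefore has Segre symbol $[(1,1),(1,1),1]$ and four nodes, exactly like orbit $23$, whose equations agree with these after renaming variables. The table, however, lists orbit $22$ in a $[(1,1),1,1,1]$ row. The number of classes is still five, but your case analysis will disagree with the table on that orbit.
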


\begin{proof}
    Each of the images of the projections $p\colon V \to \mathbb{P}^4$ described above has 2 equations given by diagonal quadratic forms.
    We first recover the equations uniformly across all $23$ orbits.  Writing $X_v=v^{2}$, the eight defining relations of $V$ are linear in the $X_v$, so we may collect them in a coefficient matrix $\Lambda\in\operatorname{Mat}_{8\times9}(\mathbb{Q})$ with $\Lambda\,(X_A,\dots,X_H)^{\mathsf T}=0$. So, $V=\nu^{-1}\bigl(\{\Lambda X=0\}\bigr)$ for the squaring map $\nu\colon[A:\cdots:H]\mapsto[A^{2}:\cdots:H^{2}]$.  Choosing an orbit amounts to choosing a five–element subset $T$ of the coordinates, with complement $T^{c}$. Furthermore, the relations among the chosen squares that survive the projection are the combinations $c\Lambda$ of the defining relations that kill the four forgotten columns, i.e., those with $c\,\Lambda_{T^{c}}=0$.  Evaluating such a $c\Lambda$ on the remaining columns yields a diagonal quadric in the variables $T$ vanishing on the image; a direct computation \cite{singer_2026} shows that the space of these is two–dimensional in every orbit. Since the center of each projection is disjoint from $V$ and the resulting intersection is irreducible, the morphism $p$ has image the complete intersection $Q_{1}\cap Q_{2}$ of the two resulting quadrics. 
    
    To see this in action, we consider orbit 1 (in Figure~\ref{fig:5}). Indeed, we can rewrite the square as
    \[
\magicsquarethree{A^2}{B^2}{C^2}{D^2}{M^2}{2M^2 - D^2}{2M^2 - C^2}{2M^2 - B^2}{2M^2 - A^2}
\]
This gives that the image of the projection
\begin{align*}
    p \colon V &{}\longrightarrow S_1 \subset \mathbb{P}^4 \\
    [A:B:C:D:M:E:F:G:H] &{} \longmapsto [A:B:C:D:M]
\end{align*}
has equations
\begin{align} 
\label{eq:MagicdelPezzoEquations}
\begin{split}
A^2 + B^2 + C^2 &{} = 3M^2 \\
A^2 + D^2 - C^2 &{} = M^2.
\end{split}
\end{align}
    Our method above returns the pencil spanned by $A^{2}+D^{2}-C^{2}-M^{2}$ and $B^{2}+2C^{2}-D^{2}-2M^{2}$, recovering \eqref{eq:MagicdelPezzoEquations}. The process for determining the equations of the other magic del Pezzo surfaces corresponding to the remaining orbits is analogous. 

Since the two quadrics are already diagonal over $\Q$ in each of the 23 orbits, we deduce that the associated pencil has discriminant supported in $\Q$-points, which can be directly computed from the diagonal coefficients of the two quadratic forms.  Indeed, the five points with multiplicity, considered as points in $\P^1$, are $(c_1^i : c_2^i)$, where $c_j^i$ is the coefficient in front of $x_i^2$ in the quadratic form $q_j$, where $(x_0:\dotsm:x_4)$ are the homogeneous coordinates on $\P^4$. We use Magma to compute Table~\ref{tab:del-pezzo}, with code provided to verify our claims \cite{singer_2026}.
\end{proof}

\begin{table}[!htbp]
\centering
\renewcommand{\arraystretch}{1.5}
\setlength{\tabcolsep}{8pt}
\begin{adjustbox}{max width=\textwidth}
\begin{tabular}{@{}l l c c c l@{}}
\toprule
Orbits & Equations & Segre Symbol & \# Nodes & \# Lines & Discriminant Config \\
\toprule
$\begin{aligned}[t]
1, 2, 3 \\ 8, 9
\end{aligned}$
  & $\begin{aligned}[t]
       x_1^2 + 2x_2^2 &= x_3^2 + 2x_4^2 \\
       x_0^2 + x_3^2  &= x_2^2 + x_4^2
     \end{aligned}$
  & $[1,1,1,1,1]$ & $0$ & $16$
  & $\{-2,\, -1,\, 0,\, 2, \, \infty\}$ \\\hline
$4, 6, 7$
  & $\begin{aligned}[t]
       2x_2^2          &= x_3^2 + x_4^2 \\
       2x_0^2 + x_3^2  &= x_1^2 + 2x_4^2
     \end{aligned}$
  & $[(1,1),1,1,1]$ & $2$ & $8$
  & $\{-1,\, 0, \, 0,\, \frac{1}{2}, \, \infty\}$ \\\hline
$5$
  & $\begin{aligned}[t]
       x_1^2 + x_3^2   &= 2x_4^2 \\
       x_0^2 + 2x_3^2  &= x_4^2 + 2x_2^2
     \end{aligned}$
  & $[(1,1),1,1,1]$ & $2$ & $8$
  & $\{  0,\, 0, \, \frac{1}{2},\, 2, \, \infty\}$ \\\hline
$\begin{aligned}[t]
10, 12, 13 \\
14, 16, 17 \\
18, 21, 22
\end{aligned}$
  & $\begin{aligned}[t]
       x_2^2 + x_4^2   &= 2x_3^2 \\
       2x_0^2 + x_1^2  &= 2x_3^2 + x_4^2
     \end{aligned}$
  & $[(1,1),1,1,1]$ & $2$ & $8$
  & $\{-1,\, 0,\, 0, \, 1,\, \infty\}$ \\\hline
$\begin{aligned}[t]
11, 15, 19, \\ 20, 23
\end{aligned}$
  & $\begin{aligned}[t]
       x_1^2 + x_2^2  &= 2x_4^2 \\
       x_0^2 + x_4^2  &= 2x_3^2
     \end{aligned}$
  & $[(1,1),(1,1),1]$ & $4$ & $4$
  & $\{-2,\,  0,\, 0,\, \infty, \, \infty\}$ \\
\bottomrule
\end{tabular}
\end{adjustbox}
\caption{Geometric isomorphism classes of magic quartic del Pezzo surfaces organized by coordinate projection orbit (see Figure~\ref{fig:5}), sample equations in $\P^4$ for a single representative in some orbit, Segre symbol, number of nodes, number of lines (over $\Qbar$), and discriminant configuration of the associated pencil.}
\label{tab:del-pezzo}
\end{table}

We can consider rational curves on their magic del Pezzo surfaces, and their preimages under the projection maps $V \to \mathbb{P}^4$ to produce more divisors on $\mathbb{P}^4$. Most fruitful of these are the lines.

\begin{prop}
The 1148 lines over $\Qbar$ on the 126 magic quartic del Pezzo surfaces yield divisors on $V$ over $\Qbar$.
\end{prop}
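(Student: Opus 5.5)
The count comes first. Table~\ref{tab:del-pezzo} records the number of lines for each geometric isomorphism class, and Figure~\ref{fig:5} records the size of each orbit of projections. I will sum, over the 23 orbits, (orbit length) times (number of lines on that orbit's surface). For the class with 16 lines (orbits 1,2,3,8,9) this gives $16(8+4+4+8+4)=448$. For orbits 4,6,7 it gives $8(8+8+4)=160$, and for orbit 5 it gives $8\cdot 8=64$. For orbits 10,12,13,14,16,17,18,21,22 it gives $8(8+4+8+8+4+4+8+4+1)=392$. For orbits 11,15,19,20,23 it gives $4(4+8+4+4+1)=84$. The total is $448+160+64+392+84=1148$, as claimed.

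The line counts themselves follow from the Segre symbol. A smooth quartic del Pezzo surface has 16 lines. With Segre symbol $[(1,1),1,1,1]$ (two $A_1$ points) it has 8, and with $[(1,1),(1,1),1]$ (four $A_1$ points) it has 4; this is the standard classification in \cite[Section~8.6]{dolgachev2012classical}. I would cross-check each count by computing the Fano scheme of lines of the representative equations in Magma. The representatives are diagonal, so each line is explicitly defined over a small multiquadratic field.

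The geometric point is that each line gives a divisor on $V$. Let $p\colon V\to S\subset\P^4$ be one of the coordinate projections. From the proposition above, $p$ is a morphism, because the center of projection misses $V$. It is also surjective onto the irreducible surface $S$, since $\dim V=2$ and $V$ is irreducible. Hence $p$ is a finite dominant morphism of surfaces: it is the restriction of a linear projection with center disjoint from $V$, so it is quasi-finite and proper. For a line $L\subset S$, the preimage $p^{-1}(L)=V\cap\Lambda$, where $\Lambda$ is the codimension-two linear space that is the cone over $L$ with vertex the center. Finiteness gives that every component of $p^{-1}(L)$ is one-dimensional and that $p^{-1}(L)$ is nonempty. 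So $p^{*}L$ is a nonzero effective Weil divisor on $V_{\Qbar}$, and its pullback to $\widetilde V_{\Qbar}$, or strict transform plus exceptional components, is a Cartier divisor. The main subtlety is that $p^{-1}(L)$ need not be irreducible. The statement only asserts that a divisor is obtained, so $p^*L$ suffices. For the Picard computation in Section~\ref{sec:PicardLattice}, I would decompose each preimage into components in Magma, as in Proposition~\ref{components}.

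The expected obstacle is bookkeeping rather than geometry: the 1148 preimages are not all distinct or independent as classes. Automorphisms of $V$ identify some of them, and lines through the nodes of $S$ may pull back to curves supported partly on the nondistinct locus. The proposition as stated only requires existence, which the finiteness argument settles.
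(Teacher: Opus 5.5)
Your proposal is correct, and your enumeration is exactly the paper's. Grouping the orbit lengths of Figure~\ref{fig:5} by the five classes of Table~\ref{tab:del-pezzo} gives $28\cdot 16+(20+8+49)\cdot 8+21\cdot 4=1148$.

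The difference lies in the supporting justification. The paper obtains the lines by direct computation, running the Schubert-cell Gr\"obner-basis method of Proposition~\ref{prop:EmptyLines} on $\mathbb{G}(1,4)$, and leaves the phrase ``yield divisors on $V$'' implicit. You instead read the counts $16$, $8$, $4$ off the Segre symbols via the classical classification. You also supply an actual argument that each preimage contains a curve, namely that a linear projection whose center misses $V$ is finite.

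Your route explains the numbers conceptually and makes the divisor claim rigorous. The paper's computation has a different advantage: it produces explicit equations and fields of definition for the lines. Those equations are what is needed afterwards to form the preimages and extract the $384$ new classes used in Section~\ref{sec:PicardLattice}.

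There are two small slips. First, the cone over $L$ with vertex the center $\{x_T=0\}\cong\P^3$ is a $\P^5$, cut out by three linear conditions on the five retained coordinates. It therefore has codimension three in $\P^8$, not two.

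Second, finiteness by itself only bounds the dimension of components of $p^{-1}(L)$ above by one. Combined with surjectivity of $p^{-1}(L)\to L$, it gives a component dominating $L$, which is all the statement requires. If you want every component to be a curve, use normality of $S$ and going-down for the finite surjection $V\to S$. Alternatively, use that $2L$ is Cartier at the $A_1$ points, together with Krull's principal ideal theorem. The same remark means you should define $p^*L$ through $2L$, since $L$ need not be Cartier at the nodes of $S$.
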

\begin{proof}
This uses the same method as the proof of Proposition \ref{prop:EmptyLines}, but with $\mathbb{G}(1, 4)$ instead of $\mathbb{G}(1, 8)$. Code to compute these can be found at \cite{singer_2026}. In terms of enumeration, summing the lines defined over $\mathbb{Q}$ across the $126$ coordinate projections, grouped by the five geometric isomorphism classes
of Table~\ref{tab:del-pezzo}, gives
\[
  28 \cdot 16 \;+\; (20 + 8 + 49)\cdot 8 \;+\; 21 \cdot 4
  \;=\; 448 + 616 + 84
  \;=\; 1148
\]
lines over $\mathbb{Q}$, where the constants $28, 20, 8, 49, 21$ are the
numbers of coordinate projections falling into each geometric class.
\end{proof}

Most of these divisors do not contribute new classes. The preimages of lines from orbits $4$, $6$, $7$, and $10$--$23$ coincide, as divisor classes on $\widetilde{V}$, with irreducible components of the split hyperplane sections already constructed in Section~\ref{sec:divisors}; we verify these coincidences in Magma \cite{singer_2026}. The new classes arise only from orbits $1$, $2$, $3$, $5$, $8$, and $9$, whose line-preimages
account for
\[
  128 + 64 + 64 + 64 + 128 + 64 = 512
\]
divisors. Of these, $128$ are redundant, either coinciding with one
another under the action of $\operatorname{Aut}(V)$ or with previously
constructed divisors. This leaves $384$ distinct new divisor classes defined over $\mathbb{Q}(i, \sqrt{2}, \sqrt{3}, \sqrt{5})$. We emphasize that these $384$ count divisor classes, not line-preimages. Their irreducible components are smooth curves of genus $17$ and degree $16$, or genus $5$ and degree $8$. For example, the hyperplane sections $\sqrt{2}A \pm \sqrt{2}B \pm \sqrt{3}D = 0$ are the preimages of the lines on the surface of orbit~$3$. These preimages are essential to the proof of Theorem~\ref{thm:picard-rank}.

We can relate the geometry of the 128 rational conics that are the irreducible components of the 3 distinct entry locus $Z_3 \subset V$, see Section~\ref{subsec:nondistinct}, to conics on the magic quartic del Pezzo surfaces. 

\begin{prop}
For each magic quartic del Pezzo surface $p : V \to S$, the image of the union of the 128 conics in $Z_3$ is the union of eight conics in $S$. 
\end{prop}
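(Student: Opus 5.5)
The plan is to make the 128 conics of $Z_3$ completely explicit as linear sections of $V$, so that their images under any coordinate projection $p\colon V\to\P^4$ can be read off combinatorially rather than computed one projection at a time.

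\emph{The conics as plane conics.} By Section~\ref{subsec:nondistinct}, a component of $Z_3$ coming from the left diagonal parametrizes squares of the shape
\[
\magicsquarethree{a^2}{b^2}{c^2}{c^2}{a^2}{b^2}{b^2}{c^2}{a^2}
\]
with $b^2+c^2=2a^2$. Concretely, each such component is the conic $\{b^2+c^2=2a^2\}$ in the plane $\Pi_\varepsilon\subset\P^8$ given by
\[
A=\varepsilon_A a,\ M=\varepsilon_M a,\ H=\varepsilon_H a,\quad B=\varepsilon_B b,\ E=\varepsilon_E b,\ F=\varepsilon_F b,\quad C=\varepsilon_C c,\ D=\varepsilon_D c,\ G=\varepsilon_G c.
\]
So the nine coordinates split into three classes of size three, namely $\{A,M,H\}$, $\{B,E,F\}$ and $\{C,D,G\}$. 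Since the conic is invariant under $a\mapsto -a$, $b\mapsto-b$ and $c\mapsto -c$, flipping all signs within one class does not change the curve. This leaves $2^9/2^3=64$ conics. The right-diagonal family is handled the same way: it is the image of the left-diagonal family under a reflection in $T$ (via $\psi$ or $\rho$), and gives another partition of the nine coordinates into three classes of three, with $\{C,M,F\}$ playing the role of $\{A,M,H\}$. I would first check that this reproduces exactly the 128 components of Proposition~\ref{components}.

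\emph{Images and counting.} Fix a five-element subset $T$ of coordinates, and let $(k_1,k_2,k_3)$ be the sizes of its intersections with the three classes. On $\Pi_\varepsilon$, the projection to the coordinates in $T$ is the linear map $(a,b,c)\mapsto(\pm a,\dots,\pm b,\dots,\pm c,\dots)$. When all $k_i\ge1$, this map is injective, so each conic maps isomorphically onto a conic in $S=p(V)$. The image depends only on the signs $\varepsilon_v$ for $v\in T$, taken modulo flipping each class; the signs of forgotten coordinates play no role. The number of distinct images from one family is therefore
\[
\prod_{k_i\ge1}2^{k_i-1}=2^{5-3}=4.
\]
To see that these four image conics are genuinely distinct, note that the planes they span differ: the planes are recovered from the sign ratios within each class, which the projection remembers. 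Adding the contributions of the left- and right-diagonal families gives eight conics. I would also check that no image conic from one family coincides with one from the other; their spanning planes are cut out by linear relations along different partitions, so this should be visible directly. Because $T$ acts compatibly on everything, it suffices to run this over the 23 orbit representatives of Figure~\ref{fig:5}.

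\emph{Main obstacle.} The hard step is the degenerate case, where $T$ misses a class entirely. For example, $T=\{A,M,H,B,E\}$ meets the left-diagonal partition in sizes $(3,2,0)$. There the projection forgets $c$, and the conic maps $2{:}1$ onto a line rather than onto a conic. The count above then breaks for that family: it would give $2^2\cdot2^1=8$ lines instead of four conics. For such $T$, the other partition is typically non-degenerate; in this example it has sizes $(1,2,2)$ and contributes four conics.

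My first attempt would be to tabulate the profile $(k_1,k_2,k_3)$ of each of the 23 representatives against both partitions, hoping that an empty class never occurs for the subsets that actually arise. If an empty class does occur, the statement as written needs re-examination for those orbits. I would then settle those finitely many cases directly, by computing the images in Magma as in \cite{singer_2026}, and determine whether "eight conics" should be read as counting the degenerate images (double lines) as degenerate conics.
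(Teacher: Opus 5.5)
Your route differs from the paper's and is better organized. The paper only intersects the orbit-$1$ surface with $A=\pm M$ to exhibit four of the conics ($C=\pm D$, $B^2+C^2=2A^2$), and asserts that the remaining checks are similar. You instead explain the count uniformly as $2^{5-3}=4$ conics per family.

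Call the two partitions $\{A,M,H\},\{B,E,F\},\{C,D,G\}$ and $\{C,M,F\},\{B,D,H\},\{A,E,G\}$. For every five-subset $T$ meeting all six classes, your argument is complete:
\begin{itemize}
\item the projection is injective on each plane $\Pi_\varepsilon$;
\item the image plane recovers the sign ratios inside each class;
\item no image conic of one family equals one of the other. A class of one partition meets a class of the other in exactly one cell, so the two-term relations $x_v=\pm x_w$ cutting out the image planes never coincide.
\end{itemize}

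The gap is that you leave the degenerate case as a hope. It really occurs, so the proposal does not prove the statement, and it cannot, because the statement fails there. The complement $T^c$ contains a class exactly when it is a class plus one further cell. This gives $6\cdot 6=36$ of the $126$ projections, and never both partitions at once, again by the one-cell intersection. For the representatives in Figure~\ref{fig:5}, these are orbits $12,13$ (which forget $\{C,M,F\}$) and $14,16,18,21$ (which forget $\{C,D,G\}$), with lengths $4+8+8+4+8+4=36$.

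Take orbit $12$, $T=\{A,B,D,E,G\}$. The image is $E^2+G^2=2A^2$, $B^2+G^2=D^2+E^2$, with nodes $[0:1:\pm1:0:0]$. The conic $C=M=F=a$, $B=D=H=b$, $A=E=G=c$, $b^2+c^2=2a^2$ maps two-to-one onto the line $[c:b:b:c:c]$.

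In each of these $36$ cases, the image of the $128$ conics is four smooth conics together with eight lines. These are all eight lines of these $[(1,1),1,1,1]$ surfaces, each passing through one of the two nodes.

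Your suggested ``double line'' reading does not repair the count: it gives four conics plus eight double lines, i.e.\ twelve. One recovers eight only by allowing reducible conics. For orbit $12$, pair the line through $[0:1:1:0:0]$ and $c$ with the line through $[0:1:-1:0:0]$ and $c$, for each of the four points $c=[\pm1:0:0:\pm1:\pm1]$. These are the sections by the ruling planes of the corank-$2$ quadric $E^2+G^2=2A^2$, and such pairs are not images of single components.

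So your method establishes the proposition for the $90$ projections in orbits $1$--$11$, $15$, $17$, $19$, $20$, $22$, $23$. For the other six orbits it yields the corrected description above. The paper's ``similar'' verification overlooks exactly this case.
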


\begin{proof}
We recall that there are two families of these conics. The first is defined by a number of hyperplane sections and the equation
\[
   B^2 + C^2 = 2A^2
\]
and the second is defined by a number of hyperplane sections and the equation
\[
    A^2 + B^2 = 2M^2.
\]
Intersecting the equations of the del Pezzo surface $S_1$ corresponding to orbit 1, see Equation \ref{eq:MagicdelPezzoEquations}, with the hyperplanes $A = \pm M$ reduces the equations of $S_1$ to
\begin{align*}
    C^2 = &{\,} D^2 \\
    B^2 + C^2 = &{\,}2A^2,  
\end{align*}
which defines four rational conics $C_1,\dots, C_4 \subset S_1$. The method to check this for the other orbits is similar.
\end{proof}

\begin{remark}
However, the fibers of rational points of magic quartic del Pezzo surfaces are not entirely contained in $Z_3$. For example, the rational points of height 28 on $S_1$ all lift to points on $V$ corresponding to the magic square with 5 squares and distinct entries
\[
\magicsquarethree{29^2}{35^2}{11^2}{3^2}{27^2}{1449}{1337}{233}{617}
\]
with magic sum 2187.
\end{remark}

We can project from a point on each magic del Pezzo surface to obtain magic cubic surfaces, yielding more divisors on $V$ via their preimages. Again, the most fruitful of these are the lines.

\begin{prop}
    There are 126 magic cubic surfaces yielded as projections from magic quartic del Pezzo surfaces. The 1981 lines defined over $\overline{\mathbb{Q}}$ on them yield divisors on $V$ over $\overline{\mathbb{Q}}$.
\end{prop}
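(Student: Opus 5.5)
The plan is to attach to each of the 126 coordinate projections $p\colon V \to S \subset \mathbb{P}^4$ a projection from a point of $S$ to $\mathbb{P}^3$. The natural centre is a point that is rational over $\mathbb{Q}$ and not on any line of $S$. For each projection I will take the image of the trivial point $[1:\cdots:1]$, since the corresponding point of $V$ lies in the orbit of the trivial points from Section~\ref{subsec:trivialpoints}.

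Step 1 is to check, orbit by orbit using Table~\ref{tab:del-pezzo}, that this centre $P$ is a smooth point of $S$ and lies on none of the $16$, $8$ or $4$ lines. The lines on each of the five geometric isomorphism classes are explicit, since each $S$ is a diagonal intersection of two quadrics, so this membership test is finite. Then projection from $P$ is the blow-up of $S$ at $P$ followed by the anticanonical map. Its image is a (possibly nodal) cubic surface $S' \subset \mathbb{P}^3$, with one cubic for each of the 126 projections. The composite $V \to S \dashrightarrow S'$ is dominant and defined over $\mathbb{Q}$, so $S'$ is magic.

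Step 2 is to count the lines on $S'$ over $\Qbar$. On a smooth cubic obtained this way the $27$ lines are the $16$ lines of $S$, the exceptional curve over $P$, and the $10$ images of the conics on $S$ through $P$. When $S$ has $A_1$ nodes the count drops in the standard way: a cubic with two nodes has $16$ lines and a cubic with four nodes has $9$. Summing over the 28, 77 and 21 projections in the three singularity types gives the number of lines. I expect the intended total of 1981 to come out of this count only after removing coincidences, namely lines shared between projections in the same $\mathrm{Aut}(V)$-orbit, and lines whose pullback to $V$ is empty or contracted. I would verify this directly in Magma, using the Grassmannian method of Proposition~\ref{prop:EmptyLines} applied to $\mathbb{G}(1,3)$.

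Step 3 is to pull each line $L \subset S'$ back to $V$. The map $V \dashrightarrow S'$ is undefined only over $P$, and it extends to the blow-up of $\widetilde V$ along the fibre over $P$. So the closure of the preimage of $L$ is a curve on $V$ (pure of dimension one), and it is a divisor on $\widetilde{V}_{\Qbar}$. The exceptional line must be treated separately: its preimage is the fibre over $P$ in $V$, which is the finite set of $64$ sign-change points. I would therefore either exclude it from the count or replace it by the exceptional-curve pullback on the blow-up.

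The main obstacle is the exact bookkeeping behind the number 1981. The obstruction is not conceptual; it is that the singular cases and the choice of centre change the line counts, and the paper's figure presumably reflects these choices. I would resolve this by running the computation explicitly for one representative in each of the 23 orbits and multiplying by the orbit lengths in Figure~\ref{fig:5}.
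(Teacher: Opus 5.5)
Your Step~1 is where the argument breaks, and Step~2 inherits the error. The centre $P=[1:1:1:1:1]$ does lie on a line of $S$ for many of the nodal projections. For orbit~12, in coordinates $(A,B,D,E,G)$, the surface is $D^2+E^2=B^2+G^2$, $2A^2+D^2=B^2+2G^2$. It contains the line $\{[s:t:t:s:s]\}$, the image of a $Z_3$ conic with $C=M=F$, which passes through $P$ and through the node $[0:1:1:0:0]$.

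In general, let $p_i$ be the points of the Gale grid from the proof of Theorem~\ref{thm:main3}. The line through $P$ in direction $v$ lies on $S$ exactly when both $(v_i)_{i\in T}$ and $(v_i^2)_{i\in T}$ are values $(\phi(p_i))_{i\in T}$ of affine functions on the grid plane. Equivalently, the five points of $T$ lie on a conic whose quadratic part is a square. When $T$ contains one collinear triple, this means the two remaining points span a segment parallel to that triple. This holds for orbits $12,13,14,16,17,18,21$, fails for $4,5,6,7,10$, and never occurs in the smooth or $4$-nodal cases.

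Projection from $P$ contracts such a line, so the cubic gets worse singularities and fewer lines than the type of $S$ predicts. Your sum $28\cdot 27+77\cdot 16+21\cdot 9=2177$ is therefore not the raw count, and the gap to $1981$ is not explained by discarding coincidences. In the paper, $1981=28\cdot 27+28\cdot 16+49\cdot 12+21\cdot 9$ is itself the raw sum of orbit length times number of lines on the cubic. There the cubics of orbits $10,12,13,14,16,17,18,21,22$ are recorded as $3$-nodal with $12$ lines. Identifying equal, reducible or previously found preimages comes afterwards, and that is what leaves $148$ new divisors.

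Your closing plan of computing one representative per orbit is the right fix. Be aware, though, that doing it carefully does not reproduce the paper's table either, so you should derive the total rather than aim for $1981$. Three discrepancies show up:
\begin{itemize}
\item On a surface with Segre symbol $[(1,1),1,1,1]$, all eight lines lie in ruling planes of the corank-$2$ quadric, and each passes through a node. So the line through $P$ is contracted together with a node: its strict transform on the blow-up of the minimal resolution at $P$ is a $(-2)$-curve meeting the exceptional curve of that node. The cubic then has type $A_1+A_2$ with $11$ lines, not $3A_1$ with $12$.
\item For orbit~$10$, with equations $D^2+E^2=2M^2$, $2A^2+B^2=2M^2+E^2$ in $(A,B,D,M,E)$, every line satisfies $2A^2+B^2=0$. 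These are exactly the sample equations in Table~\ref{tab:del-pezzo}. So $P$ is on no line and the cubic is $2$-nodal.
\item Orbit~$22$, namely $\{A,C,M,F,H\}$ with $A^2+H^2=2M^2=C^2+F^2$, has Segre symbol $[(1,1),(1,1),1]$ and four nodes, not two.
\end{itemize}
With these corrections the raw count is $28\cdot 27+36\cdot 16+40\cdot 11+22\cdot 9=1970$.

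Two smaller points. The fibre of $V\to S$ over $P$ has $2^4=16$ points (the signs of the four forgotten coordinates), not $64$. And you are right that when $P$ lies on no line, the exceptional line over $P$ has empty preimage on $V$. So not every counted line actually yields a divisor, a point the paper's statement passes over.
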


\begin{proof}
    Projecting from the smooth rational point $[1,1,1,1,1]$ on any magic quartic del Pezzo surface yields a magic cubic surface. The number of lines on each cubic surface is determined by the number of nodes, which we group by orbit in Figure~\ref{fig:5}:
    \begin{center}
        \begin{tabular}{c|c|c}
        Orbit & $\#$Nodes on Associated Cubic& $\#$Lines \\  
        \hline 
        1, 2, 3, 8, 9 & 0 & 27 \\
        4, 5, 6, 7 & 2 & 16 \\
        10, 12, 13, 14, 16, 17, 18, 21, 22& 3 & 12 \\
        11, 15, 19, 20, 23 & 4 & 9
        \end{tabular}
    \end{center}
    Accounting for the length of each orbit under the automorphism group, this amounts to
    \[
    (28\cdot27) + (28\cdot16) + (49\cdot12) + (21\cdot 9) = 756 + 448 + 588 + 189 =  1981
    \]
    lines in total. Again, we use the same method as the proof of Proposition \ref{prop:EmptyLines} to compute equations for the lines on each magic cubic surface, but with $\mathbb{G}(1, 3)$ instead of $\mathbb{G}(1, 8)$. Code to compute these, and their preimages on $V$, can be found at \cite{singer_2026}.
\end{proof}

Although the above yields 1981 divisors on $V$ by taking preimages, many are either equal to each other, reducible, or already accounted for by the line preimages on magic quartic del Pezzo surfaces or the hyperplane sections constructed in Section \ref{sec:divisors}. In total, we only obtain 148 new irreducible divisors as preimages of lines on magic cubic surfaces.

\begin{remark}
We note that the images of coordinate projections $V \to \mathbb{P}^3$ are magic quadric surfaces. These do not produce any new divisors to help with computing the Picard lattice of $V$.
\end{remark}

\subsection{Magic K3 Surfaces}
\label{subsec:magic_K3}

Analogously to magic squares with 5 square entries, for the case of magic squares with 6 square entries, we will compute defining equations for the image of $V$ under coordinate projections $\mathbb{P}^8 \dashrightarrow \mathbb{P}^5$.  As before, we can consider the orbits of the 84 coordinate projections under the action of the automorphism group, which is in bijection with the orbits of 6 entries of a $3 \times 3$ grid under the natural action of the dihedral group. By Burnside's lemma, there are 16 orbits, with representatives given in Figure~\ref{fig:6}, cf.\ \cite[Section~1]{bremnersquaresii}.

\begin{figure}
\vspace{2mm}
\noindent
\begin{minipage}[t]{0.22\textwidth}
  \centering
  \begin{tikzpicture}[scale=0.5]
    \fill[black] (0,2) rectangle (1,3);
    \fill[black] (1,2) rectangle (2,3);
    \fill[black] (2,2) rectangle (3,3);
    \fill[black] (0,1) rectangle (1,2);
    \fill[black] (1,1) rectangle (2,2);
    \fill[black] (0,0) rectangle (1,1);
    \draw[thick] (0,0) grid (3,3);
  \end{tikzpicture}
  \\[2pt] \small 1 (4)
\end{minipage}%
\hfill
\begin{minipage}[t]{0.22\textwidth}
  \centering
  \begin{tikzpicture}[scale=0.5]
    \fill[black] (1,2) rectangle (2,3);
    \fill[black] (2,2) rectangle (3,3);
    \fill[black] (0,1) rectangle (1,2);
    \fill[black] (2,1) rectangle (3,2);
    \fill[black] (0,0) rectangle (1,1);
    \fill[black] (1,0) rectangle (2,1);
    \draw[thick] (0,0) grid (3,3);
  \end{tikzpicture}
  \\[2pt] \small 2 (2)
\end{minipage}%
\hfill
\begin{minipage}[t]{0.22\textwidth}
  \centering
  \begin{tikzpicture}[scale=0.5]
    \fill[black] (0,2) rectangle (1,3);
    \fill[black] (1,2) rectangle (2,3);
    \fill[black] (2,2) rectangle (3,3);
    \fill[black] (0,0) rectangle (1,1);
    \fill[black] (1,0) rectangle (2,1);
    \fill[black] (2,0) rectangle (3,1);
    \draw[thick] (0,0) grid (3,3);
  \end{tikzpicture}
  \\[2pt] \small 3 (2)
\end{minipage}%
\hfill
\begin{minipage}[t]{0.22\textwidth}
  \centering
  \begin{tikzpicture}[scale=0.5]
    \fill[black] (1,2) rectangle (2,3);
    \fill[black] (0,1) rectangle (1,2);
    \fill[black] (1,1) rectangle (2,2);
    \fill[black] (2,1) rectangle (3,2);
    \fill[black] (0,0) rectangle (1,1);
    \fill[black] (2,0) rectangle (3,1);
    \draw[thick] (0,0) grid (3,3);
  \end{tikzpicture}
  \\[2pt] \small 4 (4)
\end{minipage}%

\vspace{12pt}

\noindent
\begin{minipage}[t]{0.22\textwidth}
  \centering
  \begin{tikzpicture}[scale=0.5]
    \fill[black] (0,1) rectangle (1,2);
    \fill[black] (1,1) rectangle (2,2);
    \fill[black] (2,1) rectangle (3,2);
    \fill[black] (0,0) rectangle (1,1);
    \fill[black] (1,0) rectangle (2,1);
    \fill[black] (2,0) rectangle (3,1);
    \draw[thick] (0,0) grid (3,3);
  \end{tikzpicture}
  \\[2pt] \small 5 (4)
\end{minipage}%
\hfill
\begin{minipage}[t]{0.22\textwidth}
  \centering
  \begin{tikzpicture}[scale=0.5]
    \fill[black] (1,2) rectangle (2,3);
    \fill[black] (0,1) rectangle (1,2);
    \fill[black] (1,1) rectangle (2,2);
    \fill[black] (2,1) rectangle (3,2);
    \fill[black] (0,0) rectangle (1,1);
    \fill[black] (1,0) rectangle (2,1);
    \draw[thick] (0,0) grid (3,3);
  \end{tikzpicture}
  \\[2pt] \small 6 (4)
\end{minipage}%
\hfill
\begin{minipage}[t]{0.22\textwidth}
  \centering
  \begin{tikzpicture}[scale=0.5]
    \fill[black] (2,2) rectangle (3,3);
    \fill[black] (0,1) rectangle (1,2);
    \fill[black] (1,1) rectangle (2,2);
    \fill[black] (2,1) rectangle (3,2);
    \fill[black] (0,0) rectangle (1,1);
    \fill[black] (2,0) rectangle (3,1);
    \draw[thick] (0,0) grid (3,3);
  \end{tikzpicture}
  \\[2pt] \small 7 (8)
\end{minipage}%
\hfill
\begin{minipage}[t]{0.22\textwidth}
  \centering
  \begin{tikzpicture}[scale=0.5]
    \fill[black] (1,2) rectangle (2,3);
    \fill[black] (2,2) rectangle (3,3);
    \fill[black] (1,1) rectangle (2,2);
    \fill[black] (2,1) rectangle (3,2);
    \fill[black] (0,0) rectangle (1,1);
    \fill[black] (2,0) rectangle (3,1);
    \draw[thick] (0,0) grid (3,3);
  \end{tikzpicture}
  \\[2pt] \small 8 (8)
\end{minipage}%

\vspace{12pt}

\noindent
\begin{minipage}[t]{0.22\textwidth}
  \centering
  \begin{tikzpicture}[scale=0.5]
    \fill[black] (2,2) rectangle (3,3);
    \fill[black] (0,1) rectangle (1,2);
    \fill[black] (2,1) rectangle (3,2);
    \fill[black] (0,0) rectangle (1,1);
    \fill[black] (1,0) rectangle (2,1);
    \fill[black] (2,0) rectangle (3,1);
    \draw[thick] (0,0) grid (3,3);
  \end{tikzpicture}
  \\[2pt] \small 9 (8)
\end{minipage}%
\hfill
\begin{minipage}[t]{0.22\textwidth}
  \centering
  \begin{tikzpicture}[scale=0.5]
    \fill[black] (2,2) rectangle (3,3);
    \fill[black] (0,1) rectangle (1,2);
    \fill[black] (1,1) rectangle (2,2);
    \fill[black] (2,1) rectangle (3,2);
    \fill[black] (1,0) rectangle (2,1);
    \fill[black] (2,0) rectangle (3,1);
    \draw[thick] (0,0) grid (3,3);
  \end{tikzpicture}
  \\[2pt] \small 10 (8)
\end{minipage}%
\hfill
\begin{minipage}[t]{0.22\textwidth}
  \centering
  \begin{tikzpicture}[scale=0.5]
    \fill[black] (2,2) rectangle (3,3);
    \fill[black] (0,1) rectangle (1,2);
    \fill[black] (1,1) rectangle (2,2);
    \fill[black] (2,1) rectangle (3,2);
    \fill[black] (0,0) rectangle (1,1);
    \fill[black] (1,0) rectangle (2,1);
    \draw[thick] (0,0) grid (3,3);
  \end{tikzpicture}
  \\[2pt] \small 11 (8)
\end{minipage}%
\hfill
\begin{minipage}[t]{0.22\textwidth}
  \centering
  \begin{tikzpicture}[scale=0.5]
    \fill[black] (1,2) rectangle (2,3);
    \fill[black] (0,1) rectangle (1,2);
    \fill[black] (2,1) rectangle (3,2);
    \fill[black] (0,0) rectangle (1,1);
    \fill[black] (1,0) rectangle (2,1);
    \fill[black] (2,0) rectangle (3,1);
    \draw[thick] (0,0) grid (3,3);
  \end{tikzpicture}
  \\[2pt] \small 12 (4)
\end{minipage}%

\vspace{12pt}

\noindent
\begin{minipage}[t]{0.22\textwidth}
  \centering
  \begin{tikzpicture}[scale=0.5]
    \fill[black] (1,2) rectangle (2,3);
    \fill[black] (2,2) rectangle (3,3);
    \fill[black] (0,1) rectangle (1,2);
    \fill[black] (1,1) rectangle (2,2);
    \fill[black] (0,0) rectangle (1,1);
    \fill[black] (2,0) rectangle (3,1);
    \draw[thick] (0,0) grid (3,3);
  \end{tikzpicture}
  \\[2pt] \small 13 (4)
\end{minipage}%
\hfill
\begin{minipage}[t]{0.22\textwidth}
  \centering
  \begin{tikzpicture}[scale=0.5]
    \fill[black] (0,2) rectangle (1,3);
    \fill[black] (2,2) rectangle (3,3);
    \fill[black] (1,1) rectangle (2,2);
    \fill[black] (2,1) rectangle (3,2);
    \fill[black] (0,0) rectangle (1,1);
    \fill[black] (2,0) rectangle (3,1);
    \draw[thick] (0,0) grid (3,3);
  \end{tikzpicture}
  \\[2pt] \small 14 (4)
\end{minipage}%
\hfill
\begin{minipage}[t]{0.22\textwidth}
  \centering
  \begin{tikzpicture}[scale=0.5]
    \fill[black] (1,2) rectangle (2,3);
    \fill[black] (2,2) rectangle (3,3);
    \fill[black] (0,1) rectangle (1,2);
    \fill[black] (0,0) rectangle (1,1);
    \fill[black] (1,0) rectangle (2,1);
    \fill[black] (2,0) rectangle (3,1);
    \draw[thick] (0,0) grid (3,3);
  \end{tikzpicture}
  \\[2pt] \small 15 (8)
\end{minipage}%
\hfill
\begin{minipage}[t]{0.22\textwidth}
  \centering
  \begin{tikzpicture}[scale=0.5]
    \fill[black] (0,2) rectangle (1,3);
    \fill[black] (2,2) rectangle (3,3);
    \fill[black] (2,1) rectangle (3,2);
    \fill[black] (0,0) rectangle (1,1);
    \fill[black] (1,0) rectangle (2,1);
    \fill[black] (2,0) rectangle (3,1);
    \draw[thick] (0,0) grid (3,3);
  \end{tikzpicture}
  \\[2pt] \small 16 (4)
\end{minipage}%
\caption{The orbits of 6 elements of a $3 \times 3$ grid under the action of the
dihedral group $D_4$, corresponding to orbits of coordinate projections
$\mathbb{P}^8 \dashrightarrow \mathbb{P}^5$ under the action of the dihedral group.  Each
orbit corresponds to an isomorphism class of magic octic K3 surface.  The ordering follows the sixteen orbits I--XVI of Bremner \cite[p.~290]{bremnersquaresii}, though with the colors reversed.  Labeled with orbit number (orbit length).}
\label{fig:6}
\end{figure}

Each orbit in Figure~\ref{fig:6} determines a coordinate projection that restricts to a morphism $V \to \P^5$, whose image turns out to be a complete intersection of three quadrics in $\P^5$, the projective model of a K3 surface of degree 8, which we call a \linedef{magic octic K3 surface}.  Equations for, and data about the singularities of, the magic octic K3 surfaces is provided in Table~\ref{tab:k3-merged}.  The magic octic K3 surfaces were first studied by Bremner~\cite{bremnersquaresii}, who made a detailed study of the one corresponding to orbit 3 (the only smooth one); below we refer to this as \linedef{Bremner's octic K3 surface} and denote it by~$Y$.  Bremner~\cite[Section~3]{bremnersquaresii} proved that $Y_{\Qbar}$ has 8 conics, 32 lines, and maximal Picard rank 20.  He also gave~\cite[pp.~297--298]{bremnersquaresii} an elliptic fibration on the K3 surface corresponding to orbit 7, denoted below by $X$. In the following, we compute the geometric Picard rank of $X$. We also consider degree 2 K3 surfaces naturally associated to the magic octic K3 surfaces, showing that they are double covers of $\mathbb{P}^2$ ramified over special arrangements of six lines. Finally, using products of coordinate projections, we construct degree 12 magic K3 surfaces as well as magic Enriques surfaces whose K3 double cover is~$Y$.

As an example, we consider the projection 
\begin{align*}
p\colon V &{} \longrightarrow  \mathbb{P}^5 \\
[A:B:C:D:M:E:F:G:H] &{} \longmapsto [A:B:C:M:F:G].
\end{align*}
corresponding to orbit 7 (see Table~\ref{tab:k3-merged}) with image $X \subset \P^5$.  Letting the coordinates on $\mathbb{P}^5$ be $[x_0, x_1, x_2, x_3, x_4, x_5]$, we compute that $X$ has equations 
\begin{align*}
x_0^2 + x_3^2 - x_4^2 - x_5^2  &= 0 \\
x_1^2 - 2x_3^2 + x_5^2         &= 0 \\
x_2^2 - 2x_3^2 + x_4^2         &= 0 
\end{align*}
Using a similar method of point searching as before, we can generate many squares of 6 squares by taking the fibers of the finite morphism $V \to X$  over rational points with distinct entries, such as the square
\[
\magicsquarethree{653^2}{85^2}{329^2}{-137543}{425^2}{498793}{503^2}{595^2}{-65159}
\]
with magic sum 541875. This approach explains why the magic square with 7 squares in \eqref{eq:7squares} is so special. Indeed, it appears in the fiber of the rational point 
$[373 : 289 : 565 : 425 : 205 : 527]$ of $X$, and it's purely a coincidence that $E$ is a square in this fiber. Previous work of Bremner~\cite{bremnersquaresii} also shows that squares of 7 squares arising from this orbit are rational points on high genus hyperelliptic curves on $X$, giving an alternate explanation.

Beyond its use in producing interesting magic squares, $X$ has intriguing arithmetic itself.

\begin{prop}
\label{prop:K3_Pic_rank}
The geometric Picard rank $\rho(\widetilde{X}_{\overline{\mathbb{Q}}}) = 19$, where $\widetilde{X}_{\overline{\mathbb{Q}}}$ is the resolution of $X_{\overline{\mathbb{Q}}}$.
\end{prop}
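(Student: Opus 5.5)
We need a lower bound of 19 and an upper bound of 19 (i.e., exclude 20). The surface $X$ is the complete intersection
\[
x_0^2+x_3^2=x_4^2+x_5^2,\qquad x_1^2+x_5^2=2x_3^2,\qquad x_2^2+x_4^2=2x_3^2
\]
in $\P^5$. It is preserved by the group $(\mu_2)^6/\mu_2$ of sign changes, and it is singular.

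\textbf{Lower bound.} We first find the singular points of $X_{\Qbar}$ by solving the rank condition on the Jacobian of the three diagonal quadrics. A point is singular exactly when some nontrivial combination $\sum \lambda_j q_j$ vanishes to first order there, which forces several coordinates to vanish. We expect a collection of $A_1$ points defined over $\Q(i,\sqrt{2})$. Their exceptional curves are pairwise orthogonal $(-2)$-classes.

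Next we collect curves on $X$:
\begin{itemize}
\item the images of the split hyperplane sections of Section~\ref{sec:divisors} under the projection $V\to X$;
\item the lines and conics cut out by hyperplanes of the form $x_i=\pm\sqrt{d}\,x_j$, or $x_i=\pm i x_j$, where the quadrics factor;
\item the components of the reducible fibres of Bremner's elliptic fibration \cite{bremnersquaresii}, together with its torsion sections.
\end{itemize}
Using Magma, as in Section~\ref{sec:PicardLattice}, we compute the intersection matrix of these classes together with the hyperplane class and the exceptional curves, and check that it has rank $19$ and nonzero discriminant. Alternatively, Shioda--Tate gives the same bound: $\rho = 2+\sum_v(m_v-1)+\operatorname{rk}MW$. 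From the reducible fibres of Bremner's fibration we aim to obtain $19$, possibly needing a section of infinite order over a small field.

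\textbf{Upper bound.} This is the main obstacle. We use van Luijk's method. Choose primes $p$ of good reduction for $\widetilde X$; by this we mean $p\nmid 2\cdot 3$ and such that the $A_1$ configuration persists mod $p$. For such $p$ we:
\begin{enumerate}
\item count points on $\widetilde X$ over $\mathbb F_{p^n}$, adding $p^n$ for each exceptional curve defined over $\mathbb F_{p^n}$;
\item recover enough traces of Frobenius on $H^2_{\et}$ to determine the characteristic polynomial of Frobenius on the transcendental part, i.e.\ the complement of the rank-$19$ Galois-stable span found above. This complement is a small piece, of dimension $3$, so only a few point counts are needed, and the functional equation (with its sign determined) pins down the polynomial;
\item bound $\rho(\widetilde X_{\overline{\mathbb F}_p})$ by the number of eigenvalues of the form $p\cdot(\text{root of unity})$, aiming for $\rho(\widetilde X_{\overline{\mathbb F}_p})\le 20$.
\end{enumerate}
Since $\rho(\overline{\mathbb F}_p)$ is even for a K3 surface over a finite field (Tate, Artin), a bound of $20$ is all one can get at a single prime, so a second step is needed. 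We do the computation at two primes $p,q$ and compare the discriminants of the rank-$20$ lattices $\NS(\widetilde X_{\overline{\mathbb F}_p})$ and $\NS(\widetilde X_{\overline{\mathbb F}_q})$, computed via Artin--Tate up to squares. If these disagree modulo squares, then $\rho(\widetilde X_{\Qbar})=20$ is impossible, because the geometric N\'eron--Severi lattice would embed with finite index in both.

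There are two alternative routes. One is Elsenhans--Jahnel's refinement using the Galois module structure of the rank-$19$ sublattice. The other is to argue that $X$ has a Shioda--Inose structure, which would imply that rank $20$ happens only for CM; we would then exhibit a prime $p$ splitting in the relevant field where $\rho(\overline{\mathbb F}_p)$ drops.

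The delicate points are the choice of primes and the square-class comparison of discriminants, so these are where we expect the difficulty to lie.
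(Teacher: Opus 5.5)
Your proposal is correct and follows the paper's route. For the lower bound, the paper computes a rank-$19$ intersection matrix of the eight exceptional curves together with $48$ components of split hyperplane sections such as $x_0\pm x_3=0$, $x_1\pm\sqrt{-2}\,x_3=0$ and $x_3\pm i x_4=0$; the nodes are in fact all defined over $\Q(i)$. For the upper bound, it uses van Luijk's method with Kloosterman's Artin--Tate refinement: $\rho=20$ at $p=13$ and at $p=37$, with discriminants $3$ and $7$ in $\Q^\times/\Q^{\times 2}$. The one difference is computational. Instead of counting points on $\widetilde X$ and using the known $19$-dimensional algebraic part, the paper projects from the plane of a conic to write $X$ as a double cover of $\P^2$ branched along a sextic, so that Magma's built-in Weil polynomial routine applies.
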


\begin{proof}
    The model $X \subset \P^5$ has eight ordinary double points all defined over $\mathbb{Q}(i)$, all of which are images of the singular points on $V$ under the projection $p$. Taking the exceptional divisors from these singular points, along with 48 curves that are components of the reducible hyperplane sections defined by
\begin{align*}
    x_0 \pm x_3 = &{\,}0,  \quad &{} x_1 \pm \sqrt{-2}x_3 = &{\,} 0, \\
    x_0 \pm x_4 = &{\,}0, \quad  &{} x_3 \pm ix_4 = &{\,}0, \\
    x_1 \pm x_2 =&{\,} 0, \quad  &{} x_3 \pm ix_5= &{\,} 0, \\
    x_2 \pm x_3 = &{\,}0, &&
\end{align*}
gives an intersection matrix of rank 19. This yields $\rho(\widetilde{X}_{\overline{\mathbb{Q}}}) \geq 19$. Magma code to check this is provided \cite{singer_2026}.

To conclude, we give an upper bound on the Picard rank by working over finite fields, using the approach of Van Luijk~\cite{van2007k3} as refined by Kloosterman~\cite{kloosterman:elliptic} (see \cite[Section~2]{varilly2017arithmetic}). We begin by finding a degree 2 quasipolarization of $X$, which we can use to efficiently compute the Weil polynomial via Magma's built-in routine. In a geometric construction similar to that in \cite{auel2026noether}, projecting from the plane containing the conic $C \subset X$ given by the equations
\[
x_1^2 - 2x_4^2 + x_5^2 = x_0 + x_5 = x_2 + x_4 = x_3 + x_4 = 0
\]
gives a degree 2 cover
\begin{align*}
\pi\colon X &{} \longrightarrow \mathbb{P}^2,  \\
[x_0, x_1, x_2, x_3, x_4, x_5] &{} \longmapsto [x_0 + x_5, x_2+x_4, x_3+x_4].
\end{align*}
Letting the coordinates on $\mathbb{P}^2$ be $[x, y, z]$, we compute that $X$ is ramified over the sextic curve
\begin{align*}
x^4y^2 - 2x^2y^4 - 4x^4yz + 6x^2y^3z + 4x^4z^2 - 10x^2y^2z^2 &{}  \\
+ \ y^4z^2 + 12x^2yz^3 - 2y^3z^3 - 8x^2z^4 + y^2z^4= &{\, }0.
\end{align*}
This branch sextic splits into a union of Galois-conjugate smooth cubics 
\begin{align*}
    E_1 = &{\,} x^2y - \sqrt2xy^2 - 2x^2z + 2\sqrt2xyz - y^2z - 2\sqrt2xz^2 + yz^2,\\
    E_2 = &{\,} x^2y + \sqrt2xy^2 - 2x^2z - 2\sqrt2xyz - y^2z + 2\sqrt2xz^2 + yz^2.
\end{align*}
We remark that these cubics are quadratic twists of the elliptic curve
\begin{center}
    $y^2 = x^3 - x^2 - 9x + 9$.
\end{center}

Modulo 13, a prime of good reduction of $X$, we compute that the characteristic polynomial of Frobenius acting on $H^2_\et(\widetilde{X}_{\overline{\mathbb{F}}_{13}}, \mathbb{Q}_\ell)$ is $(x-13)^{20}(x^2 + 22x + 169)$.
Modulo 37, also a prime of good reduction of $X$, we compute that the characteristic polynomial of Frobenius is $(x-37)^{20}(x^2 + 38x + 1369)$.
The Weil polynomials and Frobenius traces of $X$ at all primes up to 37 can be found in full at \cite{singer_2026}.

The Tate conjecture for K3 surfaces over a finite field, together with Magma's Artin--Tate formula procedure, yield that
\[
\rho(\widetilde{X}_{\overline{\mathbb{F}}_{13}}) = 20, \quad \mathrm{disc}(\mathrm{Pic}(\widetilde{X}_{\overline{\mathbb{F}}_{13}})) = 3 \in \Q^\times/\Q^{\times 2}
\]
and
\[
\rho(\widetilde{X}_{\overline{\mathbb{F}}_{37}}) = 20, \quad  \mathrm{disc}(\mathrm{Pic}(\widetilde{X}_{\overline{\mathbb{F}}_{37}})) = 7 \in \Q^\times/\Q^{\times 2}.
\]
The discriminants of the geometric N\'eron--Severi lattice of $\widetilde{X}$ modulo 13 and 37 differ in $\mathbb{Q}^{\times}/\mathbb{Q}^{\times2}$, the smallest such primes yielding differing square classes. This suffices to give an upper bound of 19 on $\rho(\widetilde{X}_{\overline{\mathbb{Q}}})$ by \cite[Proposition~2.17]{varilly2017arithmetic}.
\end{proof}

\begin{remark}
Using the same methods as Proposition~\ref{prop:K3_Pic_rank}, we compute the geometric Picard rank of each of the magic octic K3 surfaces, and they are listed in Table~\ref{tab:k3-merged}. As previously mentioned, Bremner~\cite[Section~3]{bremnersquaresii} showed that his octic K3 surface (corresponding to orbit 3) has maximal geometric Picard rank 20. Through personal communication, the authors were made aware that Damiano Testa and Tony V\'arilly-Alvarado had independently computed the geometric Picard ranks of all the magic octic K3 surfaces in 2014 using the same method of van Luijk.
\end{remark}

Using the geometric construction in Mukai's degree 8 to degree 2 isogeny~\cite{mukai1984moduli,mukai1984symplectic}, the magic octic K3 surfaces naturally determine (singular models of) degree 2 K3 surfaces. We briefly recall this construction. Let $S$ be a degree 8 K3 surface defined by a complete intersection of 3 quadrics in $\P^5= \P(V)$ for a 6-dimensional vector space $V$. The quadrics generate a net $\P^2 \subset \P(\mathrm{Sym}^2V)$. The determinant of the Gram matrix of this net of quadrics is a sextic curve $C$ in $\mathbb{P}^2$. The discriminant double cover associated to this net of quadrics is a degree 2 K3 surface $S' \to \P^2$. When both $S$ and $S'$ are smooth, this construction identifies $S$ as a moduli space of twisted sheaves on $S'$, which is twisted derived equivalent to it, cf.\  \cite{cualduararu2002nonfine},\cite{huybrechts2005equivalences}, \cite[Section~5]{mckinnie2017brauer}.

Although the magic octic K3 surfaces may be singular, the construction still produces intriguing degree 2 K3 surfaces whose arithmetic may reflect that of the original degree 8 models. In our running example $X$ above, the branch locus of the Mukai double cover is given by
\[
xyz(2x + 2y - z)(y-z)(x-z)= 0,
\]
a union of 6 lines, with two pairs of three lines that have a triple intersection. The lines $y = 0$, $y-z = 0$, and $z = 0$ meet at the triple point $[1, 0, 0]$, and the lines $x= 0$, $x - z= 0$, and $z = 0$ meet at the triple point $[0, 1, 0]$. 

The discriminant double covers associated to the magic octic K3 surfaces all have branch sextics decomposing into unions of 6 lines; except in the case of Bremner's octic K3 surface, these sextics admit triple points.  Degree 2 K3 surfaces that are ramified double covers of six lines in general position have been studied before \cite{hosono2021k3, matsumotok3, matsumotomonodromy, kloostermank3}, but degree 2 K3 surfaces that are ramified double covers of six lines arranged with triple points seem less studied. Whether the magic K3 surfaces and their associated degree 2 covers are twisted derived equivalent (possibly after resolution) is an interesting question.

\begin{remark}
K3 surfaces of high geometric Picard rank are known to have a modular transcendental part of their $\ell$-adic cohomology \cite{elkies2013modular, livne1995motivic, costa2025explicit}; exploring these properties in full with the magic K3 surfaces would deepen our understanding of the $\ell$-adic cohomology of $\widetilde{V}$ and its associated Galois representation.
\end{remark} 

In summary, we have the following.

\begin{prop}
\label{prop:K3count}
Up to automorphism, there are 14 magic octic K3 surfaces arising from magic squares with 6 square entries. All of their associated discriminant double covers are branched over arrangements of six lines in $\mathbb{P}^2$.  Except for Bremner's octic K3, which is smooth, all the rest have at worst ordinary double points and their associated discriminant branch sextics contain triple points.
\end{prop}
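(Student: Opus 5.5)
The argument is largely combinatorial plus a finite computation, mirroring the treatment of the del Pezzo case. I would begin by reducing to the 16 orbits of Figure~\ref{fig:6}, which is legitimate because coordinate projections in the same $D_4$-orbit differ by an element of $\mathrm{Aut}(V)$ (Theorem~\ref{thm:main3}). For each orbit, with chosen coordinate set $T$ of size six, I would run the same linear-algebra procedure as in the del Pezzo proposition. Writing $X_v = v^2$ and using the coefficient matrix $\Lambda$, the relations surviving the projection are the $c\Lambda$ with $c\,\Lambda_{T^c}=0$. Since $\Lambda_{T^c}$ has only three columns, generically this space is $3$-dimensional. Restricting to the $T$-columns gives three diagonal quadrics in $\P^5$. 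The projection center is disjoint from $V$ (no point of $V$ has six vanishing coordinates of the relevant type), so the morphism $V\to\P^5$ is finite. Its image is then contained in, hence equal to, this degree-$8$ complete intersection, once irreducibility and dimension two are checked.

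Next I would identify the isomorphism classes. The count ``14'' should come from two coincidences among the 16 orbits. The first kind is orbits where the three surviving diagonal quadrics span the same net after a permutation or rescaling of coordinates over $\Qbar$. The second kind is orbits where the rank drops, so that the image is not a K3 of degree 8. I would detect both directly from the $3\times 6$ coefficient matrices: the net is determined up to $\Qbar$-isomorphism by the configuration of six column vectors in $\P^2$. Column vectors are well defined up to scaling, and scaling is harmless over $\Qbar$ because diagonal coordinates can be rescaled by square roots. So two orbits give isomorphic surfaces precisely when the six points in $\P^2$ are projectively equivalent as configurations.

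For the discriminant double covers the diagonal structure does everything. The Gram matrix of $\sum_j \lambda_j q_j$ is diagonal, so its determinant is $\prod_{v\in T}\ell_v(\lambda)$, where $\ell_v$ is the linear form given by the $v$-th column. The branch sextic is therefore a union of six lines, the duals of the six points above. Triple points of the sextic correspond to collinear triples among the six column points. For singularities, a diagonal complete intersection $\bigcap\{\sum_v a_{jv}x_v^2=0\}$ is singular exactly at points whose support $I$ has the columns $\{a_{\cdot v}\}_{v\in I}$ spanning at most a $2$-dimensional space. For supports of size $\ge 3$, this means some three columns are collinear in $\P^2$; smaller supports would need coincident or zero columns.

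So the main step is to compute the six points for each orbit and check two things. For Bremner's orbit~3 the points should be in general linear position, giving smoothness and no triple points. For every other orbit there should be at least one collinear triple, but no four collinear points and no coincident points. A local computation at such a point should show that a single collinear triple yields an ordinary double point, since the local equations reduce to one nondegenerate quadric in the three coordinates of the triple. A collinear triple in the configuration dualizes to a triple point of the line arrangement, so the two conditions correspond exactly.

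I expect the main obstacle to be the bookkeeping that pins down precisely 14 classes, in particular justifying which two of the 16 orbits collapse. I would first test whether two orbits related by the row/column symmetry of the magic relations give projectively equivalent point configurations. Failing that, I would compare invariants such as the number of collinear triples and the Picard ranks in Table~\ref{tab:k3-merged} to separate the classes, and verify the coincidences by explicit projective equivalences computed in Magma.
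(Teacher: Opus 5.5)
Your treatment of the qualitative claims is correct, and it takes a genuinely different route from the paper. The paper's proof is a Magma computation of the equations, nodes and branch sextics, plus a citation of Bremner for two isomorphisms.

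Your argument runs as follows. The diagonal Gram matrix gives the six lines. A point is singular exactly when the columns indexed by its support are collinear, and each collinear triple gives four nodes. These are ordinary double points because no fourth column lies on the line. (The residual quadric lives in the three coordinates \emph{vanishing} at the node, not in those of the triple.)

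You can avoid the orbit-by-orbit check. The column configuration is the Gale transform of the six points $[1:y_v:z_v]$, $v\in T$, from the proof of Theorem~\ref{thm:main3}. So collinear column triples are exactly the complements in $T$ of grid lines contained in $T$. The degenerate cases would each need an impossible grid configuration:
a zero column needs five collinear grid points;
two proportional columns need four collinear grid points;
four collinear columns need two coincident grid points.
This reproduces the node column of Table~\ref{tab:k3-merged}; for orbit~4 it gives three triple points, matching the $12$ nodes, not the two listed. Only orbit~3, $T=\{A,B,C,F,G,H\}$, contains no grid line.

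The gap is the count $14$.

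First, your ``rank drop'' alternative never occurs: six grid points are never collinear, so all sixteen orbits give complete intersections of three quadrics.

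Second, and more seriously, your criterion, $\Qbar$-projective equivalence of the column configurations, does not yield $14$. The identifications the paper cites, $7\cong 14$ and $2\cong 13$, are realized by affine maps between the grid subsets, hence by coordinate permutations over $\Q$. But orbits $10$ and $12$, and orbits $4$ and $6$, are also projectively equivalent, over $\Q(\sqrt{-2})$. Using the equations of Table~\ref{tab:k3-merged}:
\begin{itemize}
\item The substitution $(y_0,\dots,y_5)=(x_1,2x_0,\sqrt{-2}\,x_2,\sqrt{-2}\,x_3,2x_4,\sqrt{-2}\,x_5)$ turns the orbit-$12$ quadrics into $2Q_2,\,2Q_1,\,-2Q_3$, where $Q_j$ are the orbit-$10$ quadrics.
\item The substitution $(y_0,\dots,y_5)=(\sqrt{-2}\,x_2,2x_0,2x_4,x_3,\sqrt{-2}\,x_1,\sqrt{-2}\,x_5)$ turns the orbit-$6$ quadrics into $-2(Q_3-Q_2),\,2Q_1,\,-2Q_2$, where $Q_j$ are the orbit-$4$ quadrics.
\end{itemize}
Carried out, your procedure therefore finds four coincidences and at most $12$ classes over $\Qbar$.

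Separating classes by invariants cannot rescue $14$. Your ``precisely when'' only detects isomorphisms induced by linear maps of $\P^5$. Also, node counts and Picard ranks agree on both new pairs.

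The paper's $14$ is simply sixteen orbits minus Bremner's two isomorphisms, with no proof that the rest are pairwise non-isomorphic. To prove a count you must first fix the notion of isomorphism. For the geometric notion you adopt, the answer is not $14$.
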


\begin{proof}
The equations for the magic octic K3 surfaces and their singularities were computed via explicit projection from $V$ in Magma. Code to compute the branch sextic of the associated double cover and to check its number of triple points is provided at \cite{singer_2026}. We note that there is an isomorphism between the K3 surfaces corresponding to orbits $7$ and $14$, as well as between the K3 surfaces corresponding to orbits $2$ and $13$, given in \cite[p.~296]{bremnersquaresii}. All of this information is summarized in Table~\ref{tab:k3-merged}, with the resolution of each K3 denoted as $\widetilde{S}$.
\end{proof}

\begingroup
\setlength{\LTleft}{-40pt plus 1fil}
\setlength{\LTright}{-40pt plus 1fil}
\renewcommand{\arraystretch}{1.15}
\setlength{\tabcolsep}{6pt}
\begin{longtable}{@{}l l c c l@{\hskip 3pt}c@{}}
\toprule
Orbit & Equations & \# Nodes & $\rho(\widetilde{S}_{\overline{\mathbb{Q}}})$ & Branch Sextic Factorization & \# Triple Points \\
\midrule
\endfirsthead

\multicolumn{6}{l}{\textit{Table~\ref{tab:k3-merged} continued from previous page}} \\
\toprule
Orbit & Equations & \# Nodes & $\rho(\widetilde{S}_{\overline{\mathbb{Q}}})$ & Branch Sextic Factorization & \# Triple Points \\
\midrule
\endhead

\midrule
\multicolumn{6}{r}{\textit{Continued on next page}} \\
\endfoot

\bottomrule
\caption{Magic octic K3 surfaces organized by coordinate projection orbit (Figure~\ref{fig:6}), equations in $\mathbb{P}^5$, number of nodes, geometric Picard rank of the resolution, factorization of the branch sextic of the associated double cover, and the number of triple points of the branch sextic.  Orbits are labelled and ordered following the sixteen orbits of Bremner
  \cite{bremnersquaresii}.}
\label{tab:k3-merged}
\endlastfoot

1
  & $\begin{aligned}[t]
       x_0^2 + x_3^2 - 3x_4^2 + x_5^2  &= 0 \\
       x_1^2 - x_3^2 + 2x_4^2 - 2x_5^2 &= 0 \\
       x_2^2 - 2x_4^2 + x_5^2          &= 0
     \end{aligned}$
  & $4$ & $19$
  & $\begin{aligned}[t]
       &xyz(y - z)(x - 2y + z) \\
       &\cdot (2x - 2y + 3z)
     \end{aligned}$
  & $1$ \\
\addlinespace
2, 13
  & $\begin{aligned}[t]
       2x_0^2 - x_3^2 - x_4^2          &= 0 \\
       2x_1^2 - x_3^2 + x_4^2 - 2x_5^2 &= 0 \\
       2x_2^2 + x_3^2 - x_4^2 - 2x_5^2 &= 0
     \end{aligned}$
  & $8$ & $20$
  & $\begin{aligned}[t]
       &xyz(x - y - z)(x - y + z) \\
       &\cdot (x + y)
     \end{aligned}$
  & $2$ \\
\addlinespace
3
  & $\begin{aligned}[t]
       3x_0^2 - 2x_3^2 - 2x_4^2 + x_5^2 &= 0 \\
       3x_1^2 - 2x_3^2 + x_4^2 - 2x_5^2 &= 0 \\
       3x_2^2 + x_3^2 - 2x_4^2 - 2x_5^2 &= 0
     \end{aligned}$
  & $0$ & $20$
  & $\begin{aligned}[t]
       &xyz(x - 2y - 2z) \\
       &\cdot (2x - y + z) \\
       &\cdot (2x + 2y - z)
     \end{aligned}$
  & $0$ \\
\addlinespace
4
  & $\begin{aligned}[t]
       2x_0^2 - x_3^2 - x_5^2          &= 0 \\
       x_1^2 + x_3^2 - 2x_4^2          &= 0 \\
       2x_2^2 + x_3^2 - 2x_4^2 - x_5^2 &= 0
     \end{aligned}$
  & $12$ & $20$
  & $\begin{aligned}[t]
       &xyz(x + y)(2x + y) \\
       &\cdot (4x + 2y - z)
     \end{aligned}$
  & $2$ \\
\addlinespace
5
  & $\begin{aligned}[t]
       x_0^2 - x_2^2 + x_4^2 - x_5^2   &= 0 \\
       x_1^2 + 2x_2^2 - 4x_4^2 + x_5^2 &= 0 \\
       x_3^2 - 2x_4^2 + x_5^2          &= 0
     \end{aligned}$
  & $4$ & $20$
  & $\begin{aligned}[t]
       &xyz(2y - z)(x + y - z) \\
       &\cdot (2x + 4y - z)
     \end{aligned}$
  & $1$ \\
\addlinespace
6
  & $\begin{aligned}[t]
       2x_0^2 - x_4^2 - x_5^2 &= 0 \\
       x_1^2 - 2x_3^2 + x_5^2 &= 0 \\
       x_2^2 - 2x_3^2 + x_4^2 &= 0
     \end{aligned}$
  & $12$ & $20$
  & $xyz(2y - z)(x + y)(2x - z)$
  & $3$ \\
\addlinespace
7, 14
  & $\begin{aligned}[t]
       x_0^2 + x_3^2 - x_4^2 - x_5^2 &= 0 \\
       x_1^2 - 2x_3^2 + x_5^2        &= 0 \\
       x_2^2 - 2x_3^2 + x_4^2        &= 0
     \end{aligned}$
  & $8$ & $19$
  & $\begin{aligned}[t]
       &xyz(x - z)(y - z) \\
       &\cdot (2x + 2y - z)
     \end{aligned}$
  & $2$ \\
\addlinespace
8
  & $\begin{aligned}[t]
       x_0^2 - 2x_4^2 + x_5^2        &= 0 \\
       x_1^2 + x_3^2 - 2x_5^2        &= 0 \\
       x_2^2 - x_3^2 - x_4^2 + x_5^2 &= 0
     \end{aligned}$
  & $8$ & $18$
  & $\begin{aligned}[t]
       &xyz(x + 2z)(x - 2y + z) \\
       &\cdot (x - y)
     \end{aligned}$
  & $2$ \\
\addlinespace
9
  & $\begin{aligned}[t]
       2x_0^2 - x_3^2 - 3x_4^2 + 2x_5^2 &= 0 \\
       x_1^2 + x_4^2 - 2x_5^2           &= 0 \\
       x_2^2 - x_3^2 - x_4^2 + x_5^2    &= 0
     \end{aligned}$
  & $4$ & $18$
  & $\begin{aligned}[t]
       &xyz(x - 2y + z)(2x + z) \\
       &\cdot (2x - 2y + 3z)
     \end{aligned}$
  & $1$ \\
\addlinespace
10
  & $\begin{aligned}[t]
       2x_0^2 + x_3^2 - 2x_4^2 - x_5^2 &= 0 \\
       x_1^2 - 2x_4^2 + x_5^2          &= 0 \\
       2x_2^2 - x_3^2 - x_5^2          &= 0
     \end{aligned}$
  & $8$ & $19$
  & $\begin{aligned}[t]
       &xyz(2y + z)(x - z) \\
       &\cdot (x - 2y + z)
     \end{aligned}$
  & $2$ \\
\addlinespace
11
  & $\begin{aligned}[t]
       x_0^2 - 2x_3^2         +  x_5^2 &= 0 \\
       x_1^2 + 2x_3^2 - x_4^2 - 2x_5^2 &= 0 \\
       x_2^2 - 2x_3^2 + x_4^2          &= 0
     \end{aligned}$
  & $12$ & $19$
  & $xyz(2y - z)(x - y)(x - y + z)$
  & $3$ \\
\addlinespace
12
  & $\begin{aligned}[t]
       2x_0^2 - x_4^2 - x_5^2        &= 0 \\
       x_1^2 - x_3^2 - x_4^2 + x_5^2 &= 0 \\
       2x_2^2 - x_3^2 - x_5^2        &= 0
     \end{aligned}$
  & $8$ & $19$
  & $\begin{aligned}[t]
       &xyz(2y + z)(x - 2y + z) \\
       &\cdot (x + 2y)
     \end{aligned}$
  & $2$ \\
\addlinespace
15
  & $\begin{aligned}[t]
       x_0^2 + x_3^2 - x_4^2 - x_5^2 &= 0 \\
       x_1^2 + x_3^2 - 2x_5^2        &= 0 \\
       2x_2^2 - x_3^2 - x_4^2        &= 0
     \end{aligned}$
  & $8$ & $18$
  & $\begin{aligned}[t]
       &xyz(2y + z)(x + 2z) \\
       &\cdot (x - 2y - 2z)
     \end{aligned}$
  & $2$ \\
\addlinespace
16
  & $\begin{aligned}[t]
       x_0^2 - 2x_3^2 - 2x_4^2 + 3x_5^2 &= 0 \\
       x_1^2 + x_3^2 - 2x_5^2           &= 0 \\
       x_2^2 - 2x_3^2 - x_4^2 + 2x_5^2  &= 0
     \end{aligned}$
  & $4$ & $19$
  & $\begin{aligned}[t]
       &xyz(x + 2y)(2x - 2y + 3z) \\
       &\cdot (2x - y + 2z)
     \end{aligned}$
  & $1$ \\
\end{longtable}
\endgroup
\subsubsection{Product Coordinate Projections: Magic Dodecic K3s} 

We can use products of coordinate projections to $\P^1 \times \P^1 \times \P^1$ to produce magic dodecic (degree 12) K3 surfaces associated (and twisted derived equivalent, as we prove) to each magic octic K3 surface.  In the case of Bremner's octic K3 surface, we prove that the associated magic dodecic K3 surfaces are obtained by resolving the quotient of $Y$ by a Klein four group of symplectic involutions. 

The construction of the magic dodecic K3 surfaces is as follows. After projecting to $\mathbb{P}^5$, which involves a choice of 6 coordinates determined by a choice of orbit (in Figure~\ref{fig:6}), we compose with a dominant rational map
\begin{equation}
\label{eq:MagicK3Projection}
\phi \colon \P^5 \to \mathbb{P}^1 \times \mathbb{P}^1 \times \mathbb{P}^1
\end{equation}
determined by a choice of one of the 15 partitions of the 6 coordinates of $\P^5$ into 3 pairs.  The degeneracy locus is the union of three linear 3-dimensional subspaces of $\P^5$ determined by the partition.  A Magma calculation verified that this degeneracy locus intersects the corresponding magic octic K3 surface in dimension 0.

One such choice of partition gives 
\begin{align*}
\phi : \mathbb{P}^5 &{} \dashrightarrow \mathbb{P}^1 \times \mathbb{P}^1 \times \mathbb{P}^1 \\ 
[A:B:C:F:G:H] &{} \longmapsto ([A:H], [B:G], [C:F])
\end{align*}
whose degeneracy locus is the union of the 3-dimensional subspaces $\{A = H = 0\}$, $\{B = G = 0\}$, and $\{C = F = 0\}$.

To each partition, we consider the composition of dominant rational maps 
\begin{equation}
\label{eq:dodecicmap}
\pi : \P^8 \dashrightarrow \P^5 \dashrightarrow \mathbb{P}^1 \times \mathbb{P}^1 \times \mathbb{P}^1.    
\end{equation}
A Magma computation shows that the image of $\pi$ on $V \subset \P^8$ is a $(2,2,2)$-divisor in $\mathbb{P}^1 \times \mathbb{P}^1 \times \mathbb{P}^1$, whose further image under the Segre embedding $\P^1 \times \P^1 \times \P^1 \hookrightarrow \P^7$ is a degree 12 nodal K3 surface in $\mathbb{P}^7$.  We refer to these as the \linedef{magic dodecic K3 surfaces}; there are 15 for each choice of orbit in Figure~\ref{tab:k3-merged}.

We now explain the relationship between each magic octic K3 surface and its associated magic dodecic K3 surfaces.

\begin{prop}
\label{prop:dodecicK3twistedderivedequivalence}
Let $S$ be a magic octic K3 surface.  Then any associated magic dodecic K3 surface $S'$ has at worst ordinary double point singularities and its minimal resolution is twisted derived equivalent to the minimal resolution of $S$ over~$\C$.
\end{prop}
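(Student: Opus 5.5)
The plan is to identify each magic dodecic K3 surface $S'$ with the quotient of $S$ by a Klein four group of sign changes. I would then deduce the twisted derived equivalence from Huybrechts' characterization of isogenous K3 surfaces via rational Hodge isometries of transcendental lattices.

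\textbf{Step 1: the quotient.} Write $S\subset\P^5$ as a complete intersection of three \emph{diagonal} quadrics, as in Table~\ref{tab:k3-merged}, and fix a partition of the coordinates into pairs $\{x_{a_1},x_{b_1}\},\{x_{a_2},x_{b_2}\},\{x_{a_3},x_{b_3}\}$. The map $\phi$ of \eqref{eq:MagicK3Projection} is $([x_{a_1}:x_{b_1}],[x_{a_2}:x_{b_2}],[x_{a_3}:x_{b_3}])$. It is invariant under the sign changes $\epsilon_j$ that negate one pair $(x_{a_j},x_{b_j})$. Modulo the diagonal $-1$, these generate a group $H\cong(\mu_2)^2$, which preserves $S$ because the quadrics are diagonal.

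I would show that $\phi|_S$ has degree exactly $4$ onto its image, so that $S/H\to S'$ is birational. Given a general point $([u_1:v_1],[u_2:v_2],[u_3:v_3])$ of the image, write $(x_{a_j},x_{b_j})=\lambda_j(u_j,v_j)$. The three diagonal quadrics become three linear equations in $\lambda_1^2,\lambda_2^2,\lambda_3^2$. The image lies in the vanishing locus of their $3\times 3$ determinant, a $(2,2,2)$-form, which matches the Magma computation. Off this determinant's degeneracy, the $\lambda_j^2$ are determined up to a common scalar, so the fibre is exactly an $H$-orbit. Since the degeneracy locus meets $S$ in dimension $0$, $\phi|_S$ contracts no curves, and $S/H\to S'$ is a finite birational morphism onto the normal surface $S'$, hence an isomorphism. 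So $S'\cong S/H$.

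\textbf{Step 2: symplecticity and singularities.} Each nontrivial element of $H$ negates exactly four coordinates, so it has determinant $+1$ on the ambient $\C^6$. For a complete intersection of quadrics it therefore acts trivially on $H^0(\omega_S)$, which is given by the residue of $\Omega/(q_1q_2q_3)$. So $H$ acts symplectically. A symplectic involution of a K3 surface has $8$ isolated fixed points, and their images in $S/H$ are $A_1$ points; this gives the expected $12$ nodes on the smooth model.

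The remaining singularities of $S'$ are images of nodes of $S$. If $H$ acts freely on the orbit of a node, its image is again an $A_1$ point. I expect the main obstacle to be the nodes of $S$ that are fixed by some element of $H$, since a quotient of an $A_1$ point by an involution could a priori be worse. These nodes are explicit: they come from the singular points of $V$ in \S\ref{subsec:trivialpoints}, so coordinates vanish in known patterns. My first attempt would be a local analysis in the tangent cone. If that becomes unwieldy, I would simply check with Magma, over all $14$ orbits and $15$ partitions, that $S'$ has only $A_1$ singularities.

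\textbf{Step 3: Hodge theory and twisted derived equivalence.} Let $\widetilde S$ and $\widetilde{S'}$ be the minimal resolutions. The rational correspondence given by the quotient map induces $\pi_*\colon T(\widetilde S)_\Q\to T(\widetilde{S'})_\Q$, and this is an isomorphism of rational Hodge structures. On $H$-invariant classes one has $(\pi_*x)^2=|H|\,x^2=4x^2$. Since $H$ is symplectic, it acts trivially on the transcendental lattice, so $\tfrac12\pi_*$ is a rational Hodge isometry $T(\widetilde S)_\Q\cong T(\widetilde{S'})_\Q$. Here it is essential that $|H|=4$ is a square.

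By Huybrechts' theorem on isogenous K3 surfaces (Motives of isogenous K3 surfaces), such a rational Hodge isometry is induced by a chain of twisted derived equivalences. Hence $\widetilde S$ and $\widetilde{S'}$ are twisted derived equivalent over $\C$, in the sense of such a chain. As an alternative for this step, one could construct $\widetilde{S'}$ directly as a moduli space of twisted sheaves on $\widetilde S$ in the style of the Mukai construction above. However, Huybrechts' criterion avoids needing that construction.
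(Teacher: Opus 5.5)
\textbf{Gap in Step 1: $S'\cong S/H$ is false.} Your Step 2 rests entirely on this identification. The map $\phi|_S$ is not a morphism. Its indeterminacy locus is $S\cap\{x_{a_j}=x_{b_j}=0\}$, and this is nonempty for every $j$, because three quadrics in $\{x_{a_j}=x_{b_j}=0\}\cong\P^3$ always meet. These points are fixed by the element of $H$ negating the $j$th pair.

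Take such a point $x$ that is smooth on $S$; on Bremner's $Y$ all of them are. Since the $Q_i=\sum_v c_{iv}x_v^2$ are diagonal, $dQ_i|_x=2\sum_v c_{iv}x_v\,dx_v$ involves neither $dx_{a_j}$ nor $dx_{b_j}$. So the tangent plane at $x$ is spanned by $x,e_{a_j},e_{b_j}$, and $x_{a_j},x_{b_j}$ are local parameters there. Hence the exceptional curve of $\mathrm{Bl}_xS$ maps isomorphically onto the $j$th $\P^1$-factor, so its image in $S'$ is a curve. The $A_1$ points of $S/H$ coming from fixed points are therefore not sent to points of $S'$. The birational map $S/H\dashrightarrow S'$ is undefined there and replaces them by curves, so it is not a morphism, let alone an isomorphism.

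Two further problems in the same step:
\begin{itemize}
\item A finite indeterminacy locus says nothing about contracted curves. Those lie in the locus where $\operatorname{rank}N\le 1$, and nodes of $S'$ arise exactly from curves that do get contracted.
\item Normality of $S'$ is not yet available at that stage.
\end{itemize}
Consequently your Step 2 description of $\operatorname{Sing}(S')$ (twelve nodes from fixed points plus images of nodes of $S$) describes $S/H$, not $S'$. The proposed tangent-cone analysis therefore cannot prove the first assertion. Only your fallback does: a direct check that $S'$ has only $A_1$ points, which is exactly what the paper does in Magma, via the Hessian criterion on the hypersurface $S'\subset(\P^1)^3$.

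\textbf{What survives, and how Step 3 compares with the paper.} Your fibre computation is the paper's rank argument for $N(x)$; generic rank $2$ holds because the image is a surface. It gives $\deg(\phi|_S)=4$ and shows $S/H\dashrightarrow S'$ is birational. Once $S'$ is known to be nodal:
\begin{itemize}
\item $\widetilde{S'}$ is a K3 surface.
\item $H$ lifts to a symplectic action on $\widetilde S$.
\item The minimal resolution of $\widetilde S/H$ is a K3 surface by Nikulin, birational and hence isomorphic to $\widetilde{S'}$.
\end{itemize}
So Step 3 goes through with ``birational'' in place of ``isomorphic''.

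This is a genuinely different route from the paper's. The paper proves a general lemma (Lemma~\ref{lem:square-degree-isogeny}): any dominant rational map of degree $e^2$ between K3 surfaces induces the Hodge isometry $\tfrac1e f^*$. That proof uses only $\beta_*\beta^*=d$ and the irreducibility of $T_\Q$, so it needs neither a group action nor a symplecticity check. Your route uses invariance of $T$ under the symplectic group $H$ together with $\pi^*\pi_*=\sum_{h\in H}h^*$. It costs the residue computation but gives more: $\widetilde{S'}\cong\widetilde{\widetilde S/H}$ for every magic octic K3 surface, which the paper proves only for Bremner's $Y$ in Proposition~\ref{prop:dodecicK3Nikulin}.
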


We recall that K3 surfaces $X$ and $Y$ are \linedef{twisted derived equivalent} if there is an equivalence of derived categories $D^b(X,\gamma) \cong D^b(Y,\delta)$ of sheaves twisted by Brauer classes $\gamma \in \text{Br}(X)$ and $\delta \in \text{Br}(Y)$.  Motivated by work of Orlov,  Huybrechts and Stellari refined and proved a conjecture of C\u{a}ld\u{a}raru, that K3 surfaces $X$ and $Y$ are twisted derived equivalent if and only if there exists a Hodge isometry between rational transcendental cohomology $T(X)_\Q \cong T(Y)_\Q$, see \cite[Chapter~16.4]{huybrechts:K3book}.  Before proving Proposition~\ref{prop:dodecicK3twistedderivedequivalence}, we record a general fact about rational maps of K3 surfaces and twisted derived equivalence. This is known, but lacks a citable reference, so we provide a proof for the convenience of the reader.

\begin{lemma}\label{lem:square-degree-isogeny}
Let $f \colon X \dashrightarrow Y$ be a dominant rational map of generic degree $d = e^2$ between K3 surfaces over $\mathbb{C}$. Then $f$ induces a rational Hodge isometry
\[
T(Y)_{\mathbb{Q}} \xrightarrow{\ \sim\ } T(X)_{\mathbb{Q}},
\]
given by $\tfrac{1}{e} f^{*}$. In particular, $X$ and $Y$ are twisted derived equivalent.
\end{lemma}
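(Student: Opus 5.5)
First resolve the indeterminacy of $f$. Take a sequence of blow-ups $\sigma\colon \widehat{X} \to X$ at points so that $\hat f = f\circ\sigma\colon \widehat{X} \to Y$ is a morphism, generically finite of degree $d$. We then define $f^* := \sigma_*\hat f^*$ and $f_* := \hat f_*\sigma^*$ on $H^2(-,\mathbb{Z})$. Both maps are morphisms of Hodge structures (up to Tate twists, which do not occur in degree $2$ for surfaces). The transcendental lattice $T(X)$ is the smallest primitive sub-Hodge structure of $H^2(X,\mathbb{Z})$ whose complexification contains $H^{2,0}$, or equivalently the orthogonal complement of $\NS(X)$. Blowing up points only adds algebraic classes, so $\sigma^*$ and $\sigma_*$ identify $T(X)$ with $T(\widehat{X})$ isometrically. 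Also $\hat f^*$ sends algebraic classes to algebraic classes, and $\hat f_*$ does too. By the projection formula, $\hat f^*$ and $\hat f_*$ are adjoint, so $\hat f^*$ maps $T(Y)$ into $T(\widehat X)$ (it is orthogonal to $\hat f_*$ of algebraic classes) and $\hat f_*$ maps $T(\widehat X)$ into $T(Y)$.

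The key computation is $\hat f^*\alpha\cdot\hat f^*\beta = \hat f_*\hat f^*\alpha\cdot\beta = d\,(\alpha\cdot\beta)$. This holds because $\hat f_*\hat f^* = d\cdot\mathrm{id}$ on cohomology for a generically finite morphism of degree $d$ between compact oriented manifolds of the same dimension. Hence $f^*$ multiplies the intersection form on $T(Y)$ by $d = e^2$, and $\tfrac1e f^*$ is an isometry $T(Y)_{\mathbb{Q}} \to T(X)_{\mathbb{Q}}$ onto its image, compatible with Hodge structures.

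It remains to show surjectivity rationally, and this is the step I expect needs the most care. Since $\hat f^*$ is injective (because $\hat f_*\hat f^* = d$), it suffices to compare ranks, and I would do this via irreducibility. The transcendental Hodge structure $T(X)_{\mathbb{Q}}$ of a K3 surface is irreducible: any sub-Hodge structure either contains $H^{2,0}$, in which case minimality forces equality, or its orthogonal complement does; if neither, the sub-Hodge structure is of type $(1,1)$ and hence algebraic by Lefschetz $(1,1)$, contradicting the definition of $T$. The image $f^*T(Y)_{\mathbb{Q}}$ is a nonzero sub-Hodge structure of $T(X)_{\mathbb{Q}}$. It contains $f^*\omega_Y \neq 0$, which is a nonzero multiple of $\omega_X$ by dominance and $h^{2,0}=1$, so it equals $T(X)_{\mathbb{Q}}$. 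Alternatively, apply $f_*$ symmetrically to get $\operatorname{rk} T(X) \le \operatorname{rk} T(Y)$.

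Finally, apply the Huybrechts--Stellari theorem quoted above (\cite[Chapter~16.4]{huybrechts:K3book}): a Hodge isometry $T(Y)_{\mathbb{Q}} \cong T(X)_{\mathbb{Q}}$ implies that $X$ and $Y$ are twisted derived equivalent. The square-degree hypothesis is used exactly to make the scaling factor $\tfrac1e$ rational.
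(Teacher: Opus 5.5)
Your proof is correct and follows essentially the same route as the paper. You resolve the indeterminacy by point blow-ups, identify $T(X)_{\mathbb{Q}}$ with $T(\widehat X)_{\mathbb{Q}}$, and use $\hat f_*\hat f^* = d$ together with adjointness to get a Hodge morphism into the transcendental part that scales the form by $d=e^2$. You then deduce surjectivity from irreducibility of $T(X)_{\mathbb{Q}}$ and finish with Huybrechts--Stellari, exactly as the paper does.

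The differences are cosmetic. You write $f^*=\sigma_*\hat f^*$ where the paper writes $(\alpha^*)^{-1}\circ\beta^*$; the two agree on $T(\widehat X)=\sigma^*T(X)$. You also observe that surjectivity follows directly from the minimality of $T(X)$, since the image contains $f^*\omega_Y\neq 0$.
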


\begin{proof}
Since $X$ is a smooth projective surface and $Y$ is smooth and projective, by elimination of indeterminacy for rational maps from a smooth surface to a smooth projective surface \cite[II.7]{beauville1996complex},  there is a composition of blow-ups at points $\alpha \colon \widehat{X} \to X$ such that $\beta := f \circ \alpha \colon \widehat{X} \to Y$ is a morphism. Since $f$ is dominant, $\beta$ is surjective and generically finite of degree $d$.

We first observe that $\alpha^{*}$ identifies the transcendental parts of $X$ and $\widehat{X}$. Since $\alpha$ is a composition of point blow-ups, there is an orthogonal decomposition
\[
H^{2}(\widehat{X},\mathbb{Z}) = \alpha^{*}H^{2}(X,\mathbb{Z}) \oplus
\bigoplus_{i} \mathbb{Z}\,e_{i},
\]
where the $e_i$ are the classes of exceptional divisors of the blowup, with $e_i^2 = -1$ and $e_i \cdot e_j = 0$ for $i \neq j$, and $\alpha^{*}$ is an injective morphism of Hodge structures compatible with the intersection forms. If $v \in T(X)_{\mathbb{Q}}$ and $D \in \mathrm{NS}(\widehat{X})_{\mathbb{Q}}$, then $\langle \alpha^{*}v, D\rangle = \langle v, \alpha_{*}D\rangle = 0$ because $\alpha_{*}D \in \mathrm{NS}(X)_{\mathbb{Q}}$. Hence, $\alpha^{*}T(X)_{\mathbb{Q}} \subseteq T(\widehat{X})_{\mathbb{Q}}$. Conversely, write $w \in T(\widehat{X})_{\mathbb{Q}}$ as $w = \alpha^{*}v + \sum_i c_i e_i$. The classes $e_i$ are algebraic, so pairing with $e_j$ gives $c_j = 0$, and then $\langle v, D \rangle = \langle \alpha^{*}v, \alpha^{*}D\rangle = 0$ for every $D \in \mathrm{NS}(X)_{\mathbb{Q}}$. So, $v \in T(X)_{\mathbb{Q}}$, and $\alpha^{*} \colon T(X)_{\mathbb{Q}} \to T(\widehat{X})_{\mathbb{Q}}$ is an isometry of $\mathbb{Q}$-Hodge structures.

Next, $\beta^{*} \colon H^{2}(Y,\mathbb{Z}) \to H^{2}(\widehat{X},\mathbb{Z})$ is a morphism of Hodge structures, and $\beta_{*}\beta^{*} = d \cdot \mathrm{id}$ by the projection formula. So, we get
\[
\langle \beta^{*}v, \beta^{*}w \rangle_{\widehat{X}} = \langle v, \beta_{*}\beta^{*}w \rangle_{Y} = d \, \langle v, w \rangle_{Y} \qquad \text{for } v,w \in H^{2}(Y,\mathbb{Q}).
\]
In particular, $\beta^{*}$ is injective. Exactly as above, $\langle \beta^{*}v, D\rangle = \langle v, \beta_{*}D\rangle = 0$ for $v \in T(Y)_{\mathbb{Q}}$ and $D \in \mathrm{NS}(\widehat{X})_{\mathbb{Q}}$, so $\beta^{*}T(Y)_{\mathbb{Q}} \subseteq T(\widehat{X})_{\mathbb{Q}} = \alpha^{*}T(X)_{\mathbb{Q}}$.

Therefore,
\[
f^* := (\alpha^{*})^{-1} \circ \beta^{*} \colon T(Y)_{\mathbb{Q}}
\hookrightarrow T(X)_{\mathbb{Q}}
\]
is an injective morphism of $\mathbb{Q}$-Hodge structures satisfying $\langle \varphi v, \varphi w \rangle = d \langle v, w \rangle = e^{2} \langle v, w \rangle$. However, it is a general fact that the transcendental lattice of a K3 surface is an irreducible $\mathbb{Q}$-Hodge structure, see \cite[Chapter~3.3]{huybrechts:K3book}. Since $\varphi(T(Y)_{\mathbb{Q}})$ is a nonzero sub-Hodge structure of $T(X)_{\mathbb{Q}}$, it is all of $T(X)_{\mathbb{Q}}$, so that $\varphi$ is an Hodge isomorphism, and then $\tfrac{1}{e}f^*$ is a Hodge isometry. Finally, K3 surfaces are twisted derived equivalent if and only if there is a rational Hodge isometry between the transcendental parts of their Hodge structures, see \cite[Chapter~16.4]{huybrechts:K3book}.
\end{proof}

\begin{proof}[Proof of Proposition \ref{prop:dodecicK3twistedderivedequivalence}]
Let $S \subset \P^5$ be a choice of magic octic K3 surface, let $S' \subset \P^1 \times \P^1 \times \P^1$ denote a choice of dodecic K3 surface associated to $S$, and let $p \colon S \dashrightarrow S'$ denote the dominant rational map defined by restricting $\phi$ from \eqref{eq:MagicK3Projection} to~$S$.  This is summarized as
\begin{align*}
\P^8 &{} \dashrightarrow  \P^5  \dashrightarrow \mathbb{P}^1 \times \mathbb{P}^1 \times \mathbb{P}^1 \\
V &{} \dashrightarrow S  \dashrightarrow S'
\end{align*}

A computation in Magma \cite{singer_2026} shows that $S$ and $S'$ have at worst nodal singularities. 
For $S'$, this uses the fact that $S'$ is a hypersurface in the threefold $\mathbb{P}^{1} \times \mathbb{P}^{1} \times \mathbb{P}^{1}$, so a singular point is an ordinary double point precisely when the $3 \times 3$ Hessian of a local equation is nonsingular there. Ordinary double points are resolved crepantly, so the minimal resolutions $\widetilde{S} \to S$ and $\widetilde{S}' \to S'$ are smooth projective K3 surfaces.

The base locus of $p \colon S \dashrightarrow S'$ is contained in the degeneracy locus of $\phi$, namely the intersection of $S$ with the three $3$-dimensional linear subspaces whose union is that degeneracy locus; as mentioned above, a Magma computation shows this is a finite set of points. Hence $p$ induces a rational map
\[
f \colon \widetilde{S} \dashrightarrow \widetilde{S}'
\]
between smooth projective K3 surfaces, which is dominant of the same generic degree as $p$. By Lemma~\ref{lem:square-degree-isogeny}, we are reduced to proving that $p$ is generically of square degree.  In fact, we prove that $p$ is generically of degree 4.

Write the three defining quadrics of $S$ as $Q_{1}, Q_{2}, Q_{3}$ and group each into the parts supported on the three pairs of chosen coordinates,
\[
Q_{j} = q_{j1} + q_{j2} + q_{j3},
\]
so that $q_{jk}$ is a diagonal binary quadratic form in the two coordinates of the $k$th pair. Let $T = \mathbb{G}_{m}^{3}/\mathbb{G}_{m}$ act on $\mathbb{P}^{5}$ by scaling each pair independently. Away from the degeneracy locus of $\phi$, the fibers of $\phi$ are exactly the $T$-orbits, so the fiber of $p$ through a point $x$ is $(T \cdot x) \cap S$.

For $x \in S(\C)$ with all coordinates nonzero, let
\[
N(x) = \bigl( q_{jk}(x) \bigr)_{j,k} \in \mathrm{Mat}_{3\times 3}(\C).
\]
Each row of $N(x)$ sums to $Q_{j}(x) = 0$, so $(1,1,1)^{\mathsf{T}} \in \ker N(x)$ and ${\mathrm{rank}\,N(x) \leq 2}$. Suppose $\mathrm{rank}\,N(x) = 2$, so that $\ker N(x)$ is spanned by $(1,1,1)^{\mathsf{T}}$. If $t = (\lambda_{1}, \lambda_{2}, \lambda_{3}) \in T$ satisfies $t \cdot x \in S(\C)$, then setting $\mu = (\mu_1,\mu_2,\mu_3) = (\lambda_{1}^{2}, \lambda_{2}^{2}, \lambda_{3}^{2})$ we have 
\[
0 = Q_{j}(t \cdot x) = \sum_{k=1}^{3} \mu_{k}\, q_{jk}(x)
\qquad \text{for } j = 1,2,3,
\]
so $\mu \in \ker N(x)$, and is thus a nonzero multiple of $(1,1,1)$. Up to rescaling the $Q_j$ and $t$ by global scalars, we may assume that $\mu = (1,1,1)$, i.e.\ $\lambda_{k} = \pm 1$ for each $k$. Thus, the fiber of $p$ at $x$ is the orbit of $x$ under $G \cong (\mu_{2})^{3}/\mu_{2}$, which has order $4$. This action is free at $x$; indeed, the fixed locus in $\mathbb{P}^{5}$ of a nontrivial sign change is the union of the two linear subspaces cut out by the vanishing of the coordinates it negates, respectively of the coordinates it fixes, and $x$ lies on neither since all of its coordinates are nonzero.

It remains to check that $\mathrm{rank}\, N = 2$ at a general point of $S$. Since $S$ is irreducible and the condition $\mathrm{rank}\,N \leq 1$ is closed, this amounts to exhibiting, for each magic octic K3 surface and each of the choices of pairs, a $2 \times 2$ minor of $N$ that does not lie in the ideal of $S$. This is a direct computation, see \cite{singer_2026}.
\end{proof}

We wonder whether for any given choice of magic octic K3 surface $S$ and any choice of associated magic dodecic K3 surface $S'$, the rational map $S \dashrightarrow S'$ yields an isogeny of the full Hodge structure.

In the case of Bremner's octic K3 surface $Y$, which is already smooth, we can see the above as an byproduct of the existence of symplectic involutions.

\begin{prop}
\label{prop:dodecicK3Nikulin}
Let $Y$ be Bremner's octic K3 surface.  Then, the resolutions of each of the corresponding magic dodecic K3 surfaces are isomorphic to the resolution of the quotient of $Y$ by a Klein-four group acting by symplectic involutions.
\end{prop}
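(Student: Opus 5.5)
The plan is to reuse the fiber analysis from the proof of Proposition~\ref{prop:dodecicK3twistedderivedequivalence}, now on the smooth surface $Y$. Fix one of the 15 partitions of the six coordinates of $\P^5$ into pairs, with associated map $p\colon Y \dashrightarrow Y'$ onto the magic dodecic K3 surface $Y'$. Let $G \cong (\mu_2)^3/\mu_2$ be the Klein four group of sign changes that negate whole pairs.

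\textbf{Step 1: $G$ acts on $Y$.} The defining quadrics of $Y$ in Table~\ref{tab:k3-merged} are diagonal, so each element of $G$ preserves $Y$.

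\textbf{Step 2: the involutions are symplectic.} A nontrivial $g\in G$ negates exactly two coordinates (equivalently, the other four). Its fixed locus in $\P^5$ is a line $L$ and a $\P^3$. On the diagonal complete intersection $Y$, the intersection $Y\cap L$ is the common zero locus of three binary diagonal quadrics on $L$. For a general such net this is empty, but here it may be finite. The set $Y\cap \P^3$ is cut out by three quadrics in four variables, hence finite; generically it consists of $8$ points. So each involution has isolated fixed points, which rules out an anti-symplectic involution, since those fix curves or act freely. Alternatively, an involution on a K3 surface with finitely many fixed points must be symplectic, and then it has exactly $8$ fixed points. I would confirm this count by a direct check of the three pairwise sign patterns on Bremner's equations. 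A third route is to compute directly the action of $g$ on the generator of $H^0(\omega_Y)$: for a complete intersection, $\omega_Y$ is the residue of $\Omega/(Q_1Q_2Q_3)$, where $\Omega=\sum \pm x_i\,dx_0\wedge\cdots\widehat{dx_i}\cdots\wedge dx_5$. Negating two coordinates multiplies $\Omega$ by $(-1)^2=1$ and fixes each $Q_j$, so $g$ acts trivially on $H^0(\omega_Y)$. This residue computation is the cleanest argument and I would lead with it.

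\textbf{Step 3: identify $Y'$ with $Y/G$.} The map $p$ is constant on $G$-orbits, so it factors as $Y \to Y/G \dashrightarrow Y'$. By the proof of Proposition~\ref{prop:dodecicK3twistedderivedequivalence}, a general fiber of $p$ is exactly one free $G$-orbit of size $4$. Hence the induced map $Y/G \dashrightarrow Y'$ is birational. The quotient $Y/G$ by a symplectic Klein four group has only $A_1$ singularities: there are $3\cdot 8=24$ fixed points, giving $12$ nodes. Its minimal resolution $\widetilde{Y/G}$ is therefore a K3 surface. The resolution $\widetilde{Y}'$ is also a K3 surface by Proposition~\ref{prop:dodecicK3twistedderivedequivalence}. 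Birational K3 surfaces are isomorphic, since they are minimal surfaces of Kodaira dimension $0$. This gives $\widetilde{Y/G}\cong \widetilde{Y}'$.

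\textbf{Main obstacle.} The delicate points are Step 2, if one argues via fixed points instead of the residue form, and making sure that the birationality in Step 3 is not spoiled by the finite base locus. The base locus only affects a finite set, so birationality holds. The argument also has to be run uniformly for all $15$ partitions, including the rank-$2$ condition on $N(x)$, which is already checked computationally in \cite{singer_2026}.
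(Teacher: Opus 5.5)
Your proposal is correct and follows essentially the same route as the paper. Both arguments factor $p$ through $Y/G$ and use the degree-$4$ fiber count from Proposition~\ref{prop:dodecicK3twistedderivedequivalence} to get birationality. Both prove symplecticity by the residue computation $\sigma^{*}\Omega=(-1)^{2}\Omega=\Omega$, and both conclude via Nikulin's theorem and the fact that birational K3 surfaces are isomorphic. Your alternative fixed-point route is not needed, and as written it would need extra justification that $Y\cap\mathbb{P}^3$ is finite and that the fixed locus is nonempty. Leading with the residue argument, as the paper does, is the right choice.
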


Indeed, the resolution of the quotient of a smooth K3 surface by a finite group generated by symplectic involutions is itself a smooth K3 surface \cite{nikulin1980finite}. Hence, the images of the projections $Y \to \mathbb{P}^1 \times \mathbb{P}^1 \times \mathbb{P}^1$ are alternative singular models of the quotient of $Y$ by finite groups generated by symplectic involutions.

\begin{proof}
Let $Y'$ be a magic dodecic K3 surface associated to Bremner's octic K3 surface with projection $p\colon Y \dashrightarrow Y'$ and resolution $\widetilde{Y'}$. Since the generic fiber of $p$ contains the orbit of a point under the group $G \cong (\mu_2)^3/\mu_2$ of sign changes on the pairs as in the proof of Proposition \ref{prop:dodecicK3twistedderivedequivalence}, the projection $p\colon Y \dashrightarrow Y'$ factors through the quotient rational map $\pi'\colon Y \dashrightarrow Y/G$ as $p=\overline{p}\circ\pi'$ for a dominant rational map $\overline p\colon Y/G\dashrightarrow Y'$. We let $Z$ denote $Y/G$. Then $Z$ has nodes at the images of the degeneracy points of $p$, where $G$ acts non-freely, and we let $\widetilde{Z}$ denote its resolution.  Now, by the proof of Proposition~\ref{prop:dodecicK3twistedderivedequivalence}, $p$ has generic degree~4. On the other hand, $\pi'$ has degree 4, so that $\overline{p}$ has degree 1. Hence, $\overline{p}$ is birational. This implies that the induced map on resolutions $\widetilde{\overline{p}} \colon \widetilde{Z} \to \widetilde{Y'}$ is birational. 

To conclude, we need to show that $G$ acts by symplectic involutions on $Y$. By Nikulin's theorem, this will show that $\widetilde{Z}$ is a smooth K3 surface. Since $\widetilde{Z}$ is a smooth K3 surface and a birational map between smooth minimal surfaces of non-negative Kodaira dimension must be an isomorphism, $\widetilde{\overline{p}}$ is an isomorphism.

Denote the generators of $G$ as $\sigma_1$ and $\sigma_2$. As the statement is geometric, we will assume that $Y$ is over $\mathbb{C}$. Denote by $Q_1$, $Q_2$, and $Q_3$ the diagonal quadratic forms generating the ideal of $Y$ and by $z_0, \dotsc, z_5$ a set of coordinates on $\mathbb{C}^6$. By the adjunction formula, we can describe the global symplectic 2-form on $Y$ as the restriction to $Y$ of the descent of the iterated Poincar\'e residue
\[
    \omega = \mathrm{Res}\bigg ( \frac{\Omega}{Q_1Q_2Q_3} \bigg ), \qquad \Omega = \sum_{i = 0}^5 (-1)^i z_i dz_0 \wedge \cdots \wedge \widehat{dz_i} \wedge \cdots \wedge dz_5
    \]
    to $\mathbb{P}^5$. Explicitly, on an open subset $U$ of $\mathbb{P}^5$, we have
    \[
    \bigg (dQ_1 \wedge dQ_2 \wedge dQ_3 \wedge \widetilde{\omega} \bigg )\bigg|_U = \Omega|_U,
    \]
where $\widetilde{\omega}|_Y = \omega$. Since the $Q_i$'s are diagonal quadratic forms, coordinate sign-changes do not affect $dQ_1$, $dQ_2$, or $dQ_3$ under pullback. Hence, the pullback of $\Omega$ under $\sigma_1$ and $\sigma_2$ will determine the pullback of $\widetilde{\omega}$, which in turn determines the pullback of $\omega$ on $Y$ by $\sigma_1|_Y$ and $\sigma_2|_Y$. By the explicit formula for $\Omega$, we see that $\sigma_2^*(\Omega) = \sigma_1^*(\Omega) = (-1)^2\Omega = \Omega$. This implies $\sigma_2|_Y^*(\omega) = \sigma_1|_Y^*(\omega) = \omega$, so that $\sigma_1|_Y$ and $\sigma_2|_Y$ are symplectic involutions of $Y$.
\end{proof}

\subsubsection{Product Coordinate Projections: Magic Enriques Surfaces}

Finally, we can use coordinate projections to $\P^2 \times \P^2$ to produce other magic surfaces admitting dominant rational maps from the magic octic K3 surfaces.  In the case of Bremner's octic K3 surface $Y$, these turn out to be Enriques surfaces with $Y$ as their K3 double cover. 

The construction of these magic surfaces proceeds in a similar fashion as for the magic dodecic K3 surfaces. After projecting to $\mathbb{P}^5$, which involves a choice of the 6 coordinates determined by a choice of orbit (in Figure~\ref{fig:6}), and determines a magic octic K3 surface, we compose with a dominant rational map 
\[
\psi \colon \P^5 \to \mathbb{P}^2 \times \mathbb{P}^2
\]
determined by a choice of one of the 10 partitions of the 6 coordinates of $\P^5$ into disjoint triples.  The degeneracy locus of $\psi$ is the union of two disjoint linear 2-dimensional subspaces $\Pi_1 \cup \Pi_2 \subset \P^5$ determined by the partition.  A Magma calculation verifies that this degeneracy locus intersects the corresponding magic octic K3 surface in at most dimension 0.

Hence we consider the composition of dominant rational maps 
\begin{equation*}
\label{eq:dodecicmap2}
\pi : \P^8 \dashrightarrow \P^5 \dashrightarrow \mathbb{P}^2 \times \mathbb{P}^2.
\end{equation*}
The geometry of the surface that is the image of $\pi$ restricted to $V \subset \P^8$ depends on the choice of magic octic K3 surface and also on the choice of partition.

To each partition (of the coordinates of $\P^5$ into disjoint triples) there is an associated linear involution $\sigma : \P^5 \to \P^5$ changing the signs on the three coordinates in one of the triples of the partition.  While there are ${6 \choose 3} = 20$ involutions changing the signs of three coordinates, each is projectively equivalent to the involution changing the signs of the complementary three coordinates, so we only have $20/2 = 10$ unique such involutions, which are in bijection with partitions.  The fixed-point locus of $\sigma$ on $\P^5$ is the same disjoint union of planes as the degeneracy locus of $\psi$. 
As an example, we can choose
\begin{align*}
\psi \colon \mathbb{P}^5 &{} \dashrightarrow \mathbb{P}^2 \times \mathbb{P}^2 \\
[A,B,C,F,G,H] &{} \longmapsto ([A,B,C], [F, G, H])
\end{align*}
whose degeneracy locus is the union on disjoint planes $\{A = B = C = 0\}$ and $\{F = G = H = 0\}$, and with corresponding involution 
\begin{align*}
\sigma \colon \mathbb{P}^5 &{} \longrightarrow \mathbb{P}^5 \\
[A,B,C,F,G,H] &{} \longmapsto [A,B,C, -F, -G, -H].
\end{align*}

In the case of Bremner's octic K3 surface $Y \subset \P^5$ (corresponding to orbit 3 in Figure~\ref{fig:6}), we can precisely identify the geometry of the image surface.

\begin{lemma}
Let $Y \subset \P^5$ be Bremner's octic K3 surface.  For a fixed partition of the coordinates of $\P^5$ into disjoint triples, consider the associated dominant rational map $\psi \colon \P^5 \to \P^2 \times \P^2$, image surface $\psi(Y)=S$, and involution $\sigma$.  Then $S$ is an Enriques surface and its image $S \subset \P^8$, under the Segre embedding $\P^2 \times \P^2 \to \P^8$, has degree 16.  The rational map $\psi$ restricts to a morphism $\psi \colon Y \to S$, and the involution $\sigma$ restricts to a fixed-point free involution $\sigma \colon Y \to Y$.      
\end{lemma}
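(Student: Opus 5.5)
The plan is to mirror the proofs of Propositions~\ref{prop:dodecicK3twistedderivedequivalence} and~\ref{prop:dodecicK3Nikulin}, now with the group $\langle\sigma\rangle\cong\mu_2$ in place of the Klein four group.

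\emph{Step 1: $\sigma$ is a free involution of $Y$.} The equations of $Y$ (orbit 3 in Table~\ref{tab:k3-merged}) are diagonal quadrics, so any coordinate sign change preserves $Y$. Hence $\sigma$ restricts to an involution of $Y$. Its fixed locus in $\P^5$ is $\Pi_1\cup\Pi_2$, so $\sigma$ is free on $Y$ exactly when $Y\cap\Pi_i=\emptyset$ for $i=1,2$. On $\Pi_1$ the three coordinates of one triple vanish. Restricting $Q_1,Q_2,Q_3$ there gives three diagonal ternary quadrics in the remaining three variables. Their common zero locus is empty iff the $3\times3$ coefficient matrix is invertible, since the squares of the coordinates would then all vanish. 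So freeness reduces to the nonvanishing of twenty $3\times 3$ minors (two per partition) of the $3\times 6$ coefficient matrix of $Y$. For the matrix
\[
\begin{pmatrix}3&0&0&-2&-2&1\\0&3&0&-2&1&-2\\0&0&3&1&-2&-2\end{pmatrix}
\]
I would check these by hand, or in Magma~\cite{singer_2026}, using the $D_4$-symmetry to cut down cases. I expect this to be the main obstacle: it is precisely the statement "degeneracy locus is at most $0$-dimensional" sharpened to "empty", and it must hold for all ten partitions. Granting it, $\psi|_Y$ has no base points and is therefore a morphism.

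\emph{Step 2: $\psi\colon Y\to S$ is the quotient by $\sigma$.} Fibers of $\psi$ away from $\Pi_1\cup\Pi_2$ are the orbits of the $\mathbb{G}_m$ scaling one triple. If $t\cdot x\in Y$ with $\mu=t^2$, then writing $Q_j=q_{j1}+q_{j2}$ gives $q_{j1}(x)+\mu q_{j2}(x)=0$ for each $j$. Since also $q_{j1}(x)+q_{j2}(x)=0$, we get $\mu=1$ as soon as some $q_{j2}(x)\neq0$. This holds because $q_{j2}(x)=0$ for all $j$ would force $x\in\Pi_1$, by the invertibility from Step 1. Thus every fiber is exactly $\{x,\sigma x\}$, so $\psi$ is finite of degree $2$ and factors through $Y/\sigma$. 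The map $\psi$ is constant on $\sigma$-orbits, and $S$ is normal: it is the image of a smooth surface under a finite map with fibers exactly the orbits. I would confirm normality via Magma smoothness of $S$, after which Zariski's main theorem identifies $S\cong Y/\sigma$.

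\emph{Step 3: $S$ is Enriques.} A free involution on a K3 surface has smooth Enriques quotient exactly when it is non-symplectic. By the Lefschetz fixed point formula a symplectic involution has $8$ fixed points, so freeness already forces $\sigma$ to be non-symplectic. One can also see this directly with the residue computation from Proposition~\ref{prop:dodecicK3Nikulin}: $\sigma^*\Omega=(-1)^3\Omega=-\Omega$, so $\sigma^*\omega=-\omega$.

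\emph{Step 4: the degree.} Let $h_1,h_2$ be the pullbacks of the hyperplane classes of the two $\P^2$ factors. On $Y$ each $\psi^*h_i$ is the hyperplane class $H$ of $Y$ restricted from $\P^5$, because the map to each factor is linear. Hence
\[
\psi^*(h_1+h_2)=2H,\qquad (2H)^2=4\cdot 8=32 .
\]
Dividing by $\deg\psi=2$ gives $(h_1+h_2)^2|_S=16$, which is the degree of $S$ under the Segre embedding.
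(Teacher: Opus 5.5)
Your Step~1 is the paper's own argument. The paper splits $Q_j=q_{j1}+q_{j2}$, notes that the two $3\times 3$ coefficient matrices are invertible, and concludes $Y\cap(\Pi_1\cup\Pi_2)=\varnothing$. This gives both that $\psi|_Y$ is a morphism and that $\sigma|_Y$ is free. For your matrix, all twenty minors are indeed nonzero, of absolute value $9$, $18$ or $27$.

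For the remaining claims your route differs from the paper's.
\begin{itemize}
\item The paper checks directly in Magma that $S$ is a smooth Enriques surface of Segre degree $16$. Only in the next proposition does it use $\deg S=16$ and $\psi^*\mathscr{O}_S(1,1)\cong\mathscr{O}_Y(2)$ to get $\deg\psi=2$ and $S\cong Y/\sigma$.
\item You go the other way. Your fiber computation is the two-block version of the $\mu=t^2$ argument in Proposition~\ref{prop:dodecicK3twistedderivedequivalence}. Here invertibility guarantees some $q_{j2}(x)\neq 0$ at every point of $Y$, not just generically, so all fibers are exactly $\{x,\sigma x\}$ and $\deg\psi=2$.
\item The projection formula then gives $\deg S=32/2=16$. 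Freeness, together with the fact that a symplectic involution has $8$ fixed points, makes $Y/\sigma$ Enriques.
\end{itemize}
This folds the paper's following proposition into the lemma and replaces most of the computer verification with argument.

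The one soft spot is the normality of $S$. Being the image of a smooth surface under a finite map whose fibers are exactly the orbits does not imply normality: a finite bijective map onto a cuspidal image is a counterexample. So as written you still need the Magma smoothness check, as the paper does.

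The same invertibility removes that dependence:
\begin{itemize}
\item Off $\Pi_1\cup\Pi_2$, the kernel of $d\psi_x$ is spanned by the class of the vector $v$ that scales the second triple.
\item By Euler's formula $dQ_j(x)(v)=2q_{j2}(x)$, and these are not all zero, so $v\notin T_xY$ and $\psi|_Y$ is unramified.
\item Since $Y\to Y/\sigma$ is \'etale, the induced map $Y/\sigma\to\P^2\times\P^2$ is proper and injective on points and on tangent spaces. It is therefore a closed immersion, so $S\cong Y/\sigma$ is smooth with no computer check at all.
\end{itemize}
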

\begin{proof}
A direct Magma computation verifies the fact that, for any choice of partition, $S$ is a (smooth) Enriques surface and computes its degree under the Segre embedding.

To verify the remaining claims, we need to check that the degeneracy locus of $\psi$ on $\P^5$, equivalently, the fixed-point locus of $\sigma$, is disjoint from $Y$.  To this end, write the three defining quadrics of $Y$ as $Q_j=q_{j1}+q_{j2}$, where $q_{jk}$ is a diagonal quadratic form in the coordinates of the $k$th triple in the chosen partition, and let $M_k\in\operatorname{Mat}_3(\mathbb{Q})$ be the matrix expressing $(q_{1k},q_{2k},q_{3k})$ in the squares of those coordinates. A direct computation shows $\det M_k\neq 0$ for $k=1,2$.

Let $\Pi_1,\Pi_2\subset\mathbb{P}^5$ be the planes cut out by the vanishing of the first, resp.\ second, triple of coordinates.  Then $\Pi_1\cup\Pi_2$ is the degeneracy locus of $\psi$ and the fixed locus of $\sigma$ on $\P^5$. If $x\in Y\cap\Pi_1$ then $q_{j1}(x)=0$ for all $j$, hence $q_{j2}(x)=Q_j(x)-q_{j1}(x)=0$ for all $j$, and $\det M_2\neq 0$ forces the remaining three coordinates of $x$ to vanish as well. So, $Y\cap\Pi_1=\varnothing$, and symmetrically $Y\cap\Pi_2=\varnothing$. 
\end{proof}

In fact, in the case of Bremner's octic K3 surface, the multihomogeneous ideal of each Enriques surface $S$ is generated by three $(2,2)$-forms in $\P^2 \times \P^2$. We refer to these 10 surfaces as the \linedef{magic Enriques surfaces}.  The projection of $S \subset \P^2 \times \P^2$ to one of the factors $\P^2$ is a Cossec--Verra polarization of the Enriques surface $S$, see \cite{cossec,verra}, cf.\ \cite[Section~3.2]{liedtke}, \cite[Chapter~3.4.2]{cossec_dolgachev_liedke}.

Finally, we prove the following.

\begin{prop}
Each magic Enriques surface $S$ has Bremner's octic K3 surface $Y$ as its K3 double cover $\psi \colon Y \to S$.
\end{prop}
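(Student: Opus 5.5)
The plan is to show that $\psi\colon Y \to S$ is a finite étale double cover whose deck involution is $\sigma$. Since $Y$ is a smooth K3 surface and $S$ is Enriques, this identifies $Y$ as the K3 double cover of $S$. By the preceding lemma, $\psi$ restricts to a morphism $Y \to S$, and $\sigma$ restricts to a fixed-point-free involution of $Y$. The quotient $Y/\langle\sigma\rangle$ is therefore a smooth projective surface, and the quotient map $Y \to Y/\langle\sigma\rangle$ is étale of degree $2$. Since $\psi$ is constant on $\sigma$-orbits ($\psi$ is invariant under scaling each triple, in particular under $-1$ on one triple), $\psi$ factors as $Y \to Y/\langle\sigma\rangle \xrightarrow{\overline\psi} S$. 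It then suffices to show that $\overline\psi$ is an isomorphism.

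First I compute the generic degree of $\psi\colon Y\to S$, mimicking the proof of Proposition~\ref{prop:dodecicK3twistedderivedequivalence}. The fibers of $\psi$ on $\P^5$ away from $\Pi_1\cup\Pi_2$ are orbits of the torus $\mathbb{G}_m^2/\mathbb{G}_m$ scaling the two triples. Write $Q_j=q_{j1}+q_{j2}$ and suppose $t=(\lambda_1,\lambda_2)$ with $t\cdot x\in Y$. Then $\lambda_1^2q_{j1}(x)+\lambda_2^2q_{j2}(x)=0$ for $j=1,2,3$. Since also $q_{j1}(x)+q_{j2}(x)=0$, this gives $(\lambda_1^2-\lambda_2^2)\,q_{j1}(x)=0$ for all $j$. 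If some $q_{j1}(x)\neq0$, then $\lambda_1=\pm\lambda_2$, so the fiber is $\{x,\sigma x\}$. If all $q_{j1}(x)=0$, then $\det M_1\neq0$ forces $x\in\Pi_1$, which is impossible since $Y\cap\Pi_1=\varnothing$. So \emph{every} fiber of $\psi$ on $Y$ is exactly a $\sigma$-orbit, not just the general one. Hence $\overline\psi$ is bijective on points and, in particular, finite and birational onto $S$.

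The step I expect to be the main obstacle is upgrading this bijective birational morphism to an isomorphism. Here I would use that $S$ is smooth (normal suffices), which is given by the Magma verification in the preceding lemma. A finite birational morphism onto a normal variety is an isomorphism by Zariski's main theorem. This shows $S\cong Y/\langle\sigma\rangle$, so $\psi$ is an étale double cover by the smooth K3 surface $Y$. As a consistency check, the quotient has $\chi(\mathscr{O})=\chi(\mathscr{O}_Y)/2=1$, and $K_{S}$ pulls back to $K_Y=0$, so $K_S$ is $2$-torsion and nontrivial since $p_g(S)=0$. This matches the Enriques invariants.

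Finally, for an Enriques surface $S$ the canonical double cover is the unique connected étale double cover associated to $K_S\in\Pic(S)[2]$, since $\pi_1^{\et}(S)\cong\mathbb{Z}/2$. Our cover $Y\to S$ is connected and étale of degree $2$, so it is the K3 double cover, which proves the proposition.
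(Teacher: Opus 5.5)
Your proof is correct, but it takes a different route from the paper's in two places.

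\textbf{Degree of $\psi$.} The paper argues numerically. It observes that $\psi^*\mathscr{O}_S(1,1)\cong\mathscr{O}_Y(2)$ is ample, so $\psi$ is finite, and then reads off the degree from
\[
32=\mathscr{O}_Y(2)^2=\deg\psi\cdot\deg S=16\deg\psi .
\]
You instead rerun the torus-orbit argument from the dodecic case. Feeding in $\det M_1\neq 0$ and $Y\cap\Pi_1=\varnothing$, you get the stronger statement that \emph{every} fiber of $\psi$ is exactly a $\sigma$-orbit. Hence $\overline\psi$ is bijective, and you never need the Magma value $\deg S=16$. One point to make explicit: passing from ``bijective'' to ``birational'' uses characteristic zero, since generic \'etaleness makes the general fiber cardinality equal the degree.

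\textbf{Identifying the cover.} The paper computes $\sigma^*\omega=-\omega$ from the residue formula. So $\sigma$ is a free anti-symplectic involution, $Y/\sigma$ is an Enriques surface for intrinsic reasons, and only then is it identified with $S$. You instead take $S$ to be Enriques from the lemma and conclude by uniqueness of the connected \'etale double cover, $\pi_1(S)\cong\mathbb{Z}/2$. That is a geometric statement, so you should work over $\overline{\mathbb{Q}}$ or $\C$ there.

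Your ``consistency check'' in fact makes this independent of the Magma claim, once you note two things:
\begin{itemize}
\item $q(S)\le q(Y)=0$, so $p_g(S)=\chi(\mathscr{O}_S)-1=0$;
\item $K_S$ is $2$-torsion via the norm of the \'etale cover.
\end{itemize}

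\textbf{Comparison.} Both arguments rely on normality of $S$, from Magma, for the Zariski main theorem step. The paper's route additionally yields the geometric fact that $\sigma$ is anti-symplectic. Yours is more elementary and gives finer, fiber-by-fiber information about $\psi$.
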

\begin{proof}
Composing $\psi \colon Y \to S$ with the Segre embedding $\mathbb{P}^2\times\mathbb{P}^2 \hookrightarrow\mathbb{P}^8$, we get that $\psi^{*}\mathscr{O}_S(1,1)\cong \mathscr{O}_Y(2)$. This is ample, so $\psi \colon Y \to S$ contracts no curves and is therefore a finite morphism. Moreover, we have
\[
\bigl(\psi^{*}\mathscr{O}_S(1,1)\bigr)^{2} = 4\deg Y = 32,
\]
and since $\deg S=16$, we have that $\psi$ is finite of degree $2$.

By the computation in the proof of Proposition~\ref{prop:dodecicK3Nikulin}, applied now to the associated involution $\sigma$, changing the signs of three coordinates, we have $\sigma^{*}\Omega =(-1)^{3}\Omega=-\Omega$. Since the $Q_j$ are diagonal, this gives $\sigma|_Y^{*}\omega=-\omega$. As $\sigma \colon Y \to Y$ is free, the quotient $Z = Y/\sigma$ is a smooth Enriques surface with $q : Y\to Z$ its K3 double cover.

Finally, $\psi$ is $\sigma$-invariant, so it factors as $\psi=\overline{\psi}\circ q$ and $\overline{\psi}\colon Z\to S$ is finite. Then $2=\deg\psi=\deg q \cdot\deg\overline{\psi}=2\deg\overline{\psi}$, so $\overline{\psi}$ is birational. Since $S$ is smooth, and a finite birational morphism onto a normal variety in characteristic zero is an isomorphism, we have that $S\cong Z$ so that $Y$ is the K3 double cover of $S$.
\end{proof}

\section{The Picard Lattice of $V$}
\label{sec:PicardLattice}
In this section, we establish the bounds on $\widetilde{V}$'s geometric Picard rank stated in Theorem~\ref{thm:picard-rank}. We exhibit an explicit collection of $1204$ divisor classes: $256$ exceptional divisors arising from the resolution of the $A_1$ singularities, $416$ irreducible components of split hyperplane sections defined over various number fields, $384$ preimages of lines on quartic del Pezzo surfaces admitting a projection from $V$, and $148$ preimages of lines on cubic surfaces admitting a projection from $V$. We show, via a direct computation of their intersection matrix, that they span a sublattice of rank $518$ in $\Pic(\widetilde{V}_{\overline{\Q}})$. We begin in Proposition~\ref{prop:pic-ns} by verifying that $\Pic(\widetilde{V}_{\overline{\Q}}) \cong \NS(\widetilde{V}_{\overline{\Q}})$ is torsion-free. We then provide a summary of all of the divisors produced in sections 3 and 4. Finally, we assemble the intersection-theoretic data, carefully accounting for the contribution of the $A_1$ singularities to the intersection pairing on $\widetilde{V}$. This completes the proof of Theorem~\ref{thm:picard-rank} with the help of Magma~\cite{bosma1997magma}. 

To begin, it is known that linear and numerical equivalence agree for divisors on complete intersections. However, since we are working with a resolution of a complete intersection, it is important to spell out a proof:

\begin{prop}
\label{prop:pic-ns}
    $\mathrm{Pic}(\widetilde{V}_{\overline{\mathbb{Q}}}) \cong \mathrm{NS}(\widetilde{V}_{\overline{\mathbb{Q}}})$. Furthermore, $\mathrm{NS}(\widetilde{V}_{\overline{\mathbb{Q}}})$ is torsion-free.
\end{prop}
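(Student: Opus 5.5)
The plan is to use the exponential sequence on $\widetilde{V}^{an}$, together with the vanishing $q(\widetilde{V}) = h^{0,1} = 0$ and the torsion-freeness of $H^2(\widetilde{V}^{an},\mathbb{Z})$ established in the proof of Theorem~\ref{thm:main1}. The first step is to reduce to $\C$. For a smooth projective variety over an algebraically closed field of characteristic zero, $\mathrm{Pic}$ and $\mathrm{NS}$ are invariant under the extension $\Qbar \subset \C$. Indeed, $\mathrm{Pic}^0$ is the group of points of an abelian variety (the Picard variety), which will be shown to be trivial. Moreover, $\mathrm{NS}$ is invariant under algebraically closed base change, by the standard rigidity/specialization argument; equivalently, the Picard scheme is locally of finite type and its component group does not change. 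So it suffices to prove $\mathrm{Pic}(\widetilde{V}_\C)\cong \mathrm{NS}(\widetilde{V}_\C)$ and that the latter is torsion-free. By GAGA, $\mathrm{Pic}(\widetilde{V}_\C) = H^1(\widetilde{V}^{an},\mathscr{O}^*)$.

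Next, the exponential sequence $0\to \mathbb{Z}\to \mathscr{O}\to \mathscr{O}^*\to 0$ gives the long exact sequence
\[
H^1(\widetilde{V}^{an},\mathscr{O}) \to \mathrm{Pic}(\widetilde{V}^{an}) \xrightarrow{c_1} H^2(\widetilde{V}^{an},\mathbb{Z}) \to H^2(\widetilde{V}^{an},\mathscr{O}).
\]
By Theorem~\ref{thm:main1} (or the vanishing $h^1(\mathscr{O})=0$ for complete intersections transported through the diffeomorphism and Hodge numbers), $H^1(\widetilde{V}^{an},\mathscr{O})=0$. Hence $c_1$ is injective, so $\mathrm{Pic}^0 = 0$ and $\mathrm{Pic} = \mathrm{NS}$, which is identified with the image of $c_1$, namely the kernel of $H^2(\mathbb{Z})\to H^{0,2}$ (the Lefschetz $(1,1)$ theorem). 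Since $\mathrm{NS}$ then embeds in $H^2(\widetilde{V}^{an},\mathbb{Z})$, and this group is torsion-free by Theorem~\ref{thm:main1}, $\mathrm{NS}$ is torsion-free.

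The main subtlety is the torsion-freeness of $H^2(\widetilde{V}^{an},\mathbb{Z})$. The paper derived it for the smooth complete intersection and transferred it via Atiyah's diffeomorphism (Lemma~\ref{lem:smoothing}), so I would cite that explicitly. As a cross-check, I would note that simple connectivity gives $H_1=0$, hence $\mathrm{Tors}\,H^2 = \mathrm{Tors}\,H_1 = 0$ by universal coefficients. The other point requiring care is the descent from $\C$ to $\Qbar$. I would handle it by noting that the injection $\mathrm{Pic}(\widetilde{V}_{\Qbar})\hookrightarrow \mathrm{Pic}(\widetilde{V}_\C)$ already yields torsion-freeness and the vanishing of $\mathrm{Pic}^0$ over $\Qbar$, which is all that is needed.
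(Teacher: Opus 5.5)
Your proposal is correct and follows the paper's proof. You reduce from $\Qbar$ to $\C$ via GAGA, use the exponential sequence with $h^{0,1}=0$ to make $c_1$ injective (so $\mathrm{Pic}^0=0$ and $\mathrm{Pic}=\mathrm{NS}$), and deduce torsion-freeness from that of $H^2(\widetilde{V}^{an},\mathbb{Z})$. Two small differences are fine: your descent via the injection $\mathrm{Pic}(\widetilde{V}_{\Qbar})\hookrightarrow\mathrm{Pic}(\widetilde{V}_{\C})$ is a more self-contained substitute for the paper's appeal to base-change invariance of $\mathrm{Pic}$ and $\mathrm{NS}$, and your simple-connectivity check on torsion is also valid.
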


\begin{proof}
    By GAGA \cite{serre_1956_gomtrie} and the fact that N\'eron-Severi and Picard groups are insensitive to algebraically closed extension of algebraically closed fields \cite{maulik2012neron}, the Picard group of $\widetilde{V}^{an}$ and the geometric Picard group of $\widetilde{V}$ agree. So, we can use the exponential exact sequence in the analytic category    
    \[
    0 \longrightarrow 2\pi i\mathbb{Z} \longrightarrow \mathscr{O}_{\widetilde{V}^{an}} \longrightarrow \mathbb{G}_{m} \longrightarrow 0
    \]    
    and extend to the long exact sequence of cohomology. Since $h^{1, 0} = 0$ by Theorem~\ref{thm:main1}, the long exact sequence decomposes to the exact sequence    
    \[
    0 \longrightarrow \mathrm{Pic}(\widetilde{V}^{an}) \longrightarrow H^2(\widetilde{V}^{an}, \mathbb{Z}) \longrightarrow H^2(\widetilde{V}^{an}, \mathscr{O}_{\widetilde{V}^{an}}) \longrightarrow H^2(\widetilde{V}^{an}, \mathbb{G}_m) \longrightarrow \cdots
    \]
    This shows that $\mathrm{Pic}(\widetilde{V}^{an})$ injects into $H^2$, so $\mathrm{Pic}^0(\widetilde{V}^{an})$ is trivial and $\mathrm{Pic}(\widetilde{V}^{an}) \cong \mathrm{NS}(\widetilde{V}^{an})$. Since $H^2(\widetilde{V}^{an}, \mathbb{Z})$ is torsion-free by Theorem~\ref{thm:main1}, $\mathrm{NS}(\widetilde{V}^{an})$, is torsion-free.
    
\end{proof}

In total, counting the 416 split hyperplane sections we constructed in Sections~\ref{subsec:nondistinct}, \ref{subsec:coordinate_hyperplane_sections}, and \ref{sec:numberfields}, the 384 preimages of lines on magic quartic del Pezzo surfaces constructed in Section \ref{sec:magic-surfaces}, the 148 preimages of lines on magic cubic surfaces constructed in Section \ref{sec:magic-surfaces}, and the 256 exceptional classes, we have produced 1204 divisors on $\widetilde{V}$. They are distributed as follows:

\begin{center}
\label{tab:divisorcount}
\begin{tabular}{c|c}
    Method & Number of Divisors \\
    \hline
    Irreducible Component of $Z$ over $\mathbb{Q}$ &  176 \\
    Irreducible Component of $Z'$ over $\mathbb{Q}(i, \sqrt2)$& 64\\
    Irreducible Component of $Z''$ over $\mathbb{Q}(i, \sqrt{2}, \sqrt3)$ &  176\\
     Preimages of Lines on dP4s over $\mathbb{Q}(i, \sqrt2, \sqrt3, \sqrt5)$ & 384 \\
     Preimages of Lines on Cubic Surfaces over $\mathbb{Q}(i, \sqrt2, \sqrt3, \sqrt5)$ & 148 \\
    Exceptional Divisors over $\mathbb{Q}(i, \sqrt{2})$ & 256 \\
    \hline 
    Total & 1204 \\
\end{tabular}
\end{center}

In order to understand the sublattice of the N\'eron--Severi group generated by these 1204 divisors, we compute the intersection matrix between them.  The rank of this symmetric matrix then coincides with the rank of the sublattice generated by these divisors.   
Before computing the intersection matrix, we need to give some details on how intersection theory behaves on $V$ and $\widetilde{V}$. 

Let $S$ be a surface with an $A_1$ singularity $p$ and $\pi : \widetilde{S} \to S$ the resolution of $S$ at $p$ with exceptional divisor $E$. For a curve $C \subset S$ the divisor class of $\pi^*C$ is given by
\[
\pi^*C = \widetilde{C} + aE
\]
where $\widetilde{C}$ is the strict transform, and $a$ is some integer. This class is subject to the relation $\pi^*C.E = 0$, meaning that $\widetilde{C}.E + aE^2 = 0$. Denoting the multiplicity of $C$ at $p$ as $m_p(C)$, this yields $\widetilde{C}.E = m_p(C) = 2a$ since $E^2 = -2$. So, $a = \frac{m_p(C)}{2}$. Fractions are admissible since we work with Weil divisors that may not be Cartier on $S$. Furthermore, for any curves $C$ and $D$ on $S$ that intersect transversally, their intersection can be computed as
\[
C.D  = \frac{m_p(C)m_p(D)}{2} + \widetilde{C}.\widetilde{D}
\]
via the above formula.

For self-intersection, we recall that the self-intersection formula for a curve $C$ on a smooth surface $S$ is 
\[
C^2 = 2g(C) - 2 - K_S.C
\]
by adjunction formula.  When $S$ has $A_1$-singularities, this remains the same when we consider the self-intersection of the strict transform of a curve $C \subset S$ to the resolution $\pi: \widetilde{S} \to S$. 

However, computing the intersection numbers for our list of divisors is easier than the above formula suggests. We verified that whenever two distinct curves in our list of 1204 intersect nontrivially, their scheme-theoretic intersections are transverse. Each curve was also verified to meet the singular points with multiplicity one. Moreover, at each node $p$ lying on two of our curves $C$ and $D$, we verified that $C$ and $D$ have distinct tangent directions at $p$. Equivalently, the strict transforms $\widetilde{C}$ and $\widetilde{D}$ meet the exceptional divisor $E_p$ in distinct points. 

Hence, $\widetilde{C}$ and $\widetilde{D}$ are disjoint over the singular locus  $\mathrm{Sing}(V)$ and meet transversally elsewhere, so each point of $C \cap D \smallsetminus \mathrm{Sing}(V)$ contributes $1$ to $\widetilde{C}. \widetilde{D}$, and each node of $C \cap D$ contributes $0$. We thus can compute
\[
\widetilde{C}. \widetilde{D} = \#(C\cap D) - \#(C \cap D \cap \mathrm{Sing}(V)).
\]

For the self-intersection, we know that $K_{\widetilde{V}} = 3H$. Additionally, since each curve meets the singular locus transversally with multiplicity one, blowing up the $A_1$-points does not alter the arithmetic genus of the strict transform. From this and the adjunction formula above, we derive that for a curve $C$ on $V$, the self-intersection of its strict transform is 
\[
\widetilde{C}^2 = 2g(\widetilde{C}) - 2 - 3\deg(\widetilde{C}) = 2g(C) - 2 - 3\deg(C),
\]
where $g(C)$ is the arithmetic genus. Using this, we compute that the $1204 \times 1204$ intersection matrix of the strict transforms of the $1204$ curves we have produced has rank $518$.  This sublattice contains $H$, since it contains all the components of various hyperplane sections.  By the Hodge index theorem, the orthogonal complement of $H$ is negative definite, and thus by a lattice-theoretic argument, the rank of the Gram matrix of the collection of divisors coincides with the rank of the sublattice it generates. Hence, $\rho(\widetilde{V}_{\overline{\mathbb{Q}}}) \geq 518$. 

More precisely, the $948$ divisors given by split hyperplane sections and preimages of lines contribute rank $262$, as the exceptional classes give $256$ independent elements. Theorem~\ref{thm:main1} and the Lefschetz theorem on $(1, 1)$-classes then give the upper bound $\rho(\widetilde{V}_{\overline{\mathbb{Q}}})\leq 544$. All computational data and Magma code used to construct the divisors and verify the intersection-theoretic computations are available in the accompanying repository \cite{singer_2026}. In particular, the repository contains the complete $1204 \times 1204$ matrix and code to reproduce the rank computation.

\begin{remark}
We expect the geometric Picard rank of $\widetilde{V}$ to be maximal. Indeed, any new nontrivial divisor classes will yield many more via the action of the automorphism group. The methods of Sections 3 and 4 appear to be saturated: every divisor produced by our final constructions, including lifts of divisors from the magic K3 surfaces, already appeared in the span of the 1204 classes above. We therefore expect producing the remaining classes to require Hodge-theoretic methods. The sublattice $L$ spanned by our 1204 classes is stable under the automorphism group $G$, and the intersection form is nondegenerate on $L$, so $L^\perp \cap NS(\widetilde{V}_{\overline{\mathbb{Q}}})$ is a $G$-stable complement. If $\rho_{\widetilde{V}_{\overline{\mathbb{Q}}}}$ is maximal, this complement is a 26-dimensional representation of $G$, compatibly with the Galois action.
\end{remark}

\appendix
\section{Lines on $V$}
\label{subsec:lines-conics}

In this appendix, we show that $V$'s Fano scheme of lines is empty, meaning that $V$ contains no lines over $\Qbar$ or $\mathbb{C}$. The algorithm used for these calculations builds on a method of Elsenhans--Jahnel~\cite{elsenhans2008k3} via Schubert cells of Grassmannians to parametrize the Fano scheme of lines. To describe this approach, let $\mathbb{G}(1,8)$ denote the projective Grassmannian of lines in $\mathbb{P}^8$, and let $F(V) \subset \mathbb{G}(1,8)$ denote the Fano scheme of lines on $V$. For a single quadric $Q \subset \mathbb{P}^8$, the Fano scheme $F(Q)$ is cut out in $\mathbb{G}(1,8)$ by a section of $\mathscr{E} = \mathrm{Sym}^2 (\mathscr{S}^\vee)$, where $\mathscr{S}$ is the tautological rank-2 subbundle on $\mathbb{G}(1,8)$. The bundle $\mathscr{E}$ has rank $3$, so $F(Q)$ has expected codimension $3$. Since $V$ is a complete intersection of six quadrics $Q_1, \ldots, Q_6$, the Fano scheme of lines is the intersection
\[
F(V) = F(Q_1) \cap \cdots \cap F(Q_6) \subset \mathbb{G}(1,8),
\]
and has expected codimension $6 \cdot 3 = 18$ in a Grassmannian of dimension $2 \cdot 7 = 14$. The expected dimension is therefore negative, so we expect $F(V) = \varnothing$. This turns out to be true:

\begin{prop}
\label{prop:EmptyLines}
The Fano scheme of lines $F(V)$ of $V$, the variety of $3 \times 3$ magic squares of squares, is empty.
\end{prop}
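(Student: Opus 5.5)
The plan is to reduce the statement to a finite collection of polynomial systems, one for each Schubert chart of $\mathbb{G}(1,8)$, and to show that each system has no solutions over $\Qbar$. This follows the method of Elsenhans--Jahnel. Every line $L \subset \mathbb{P}^8$ is spanned by two vectors $a, b \in \Qbar^9$. After row reduction this basis can be taken in a unique echelon form: there are pivot positions $i<j$ with $a_i = 1$, $b_j = 1$, $b_i = 0$, and vanishing entries to the left of each pivot. The $\binom{9}{2} = 36$ choices of $(i,j)$ give a stratification of $\mathbb{G}(1,8)$ into Schubert cells. Each cell is an affine space whose coordinates are the free entries of $a$ and $b$, at most $14$ of them. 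It therefore suffices to show that, on every cell, the conditions for $L \subset V$ generate the unit ideal in the polynomial ring over $\Q$.

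The key simplification comes from the diagonal shape of the quadrics, as in the proof of Theorem~\ref{thm:main3}. Substitute $sa + tb$ into a diagonal quadric $q = \sum_v c_v(q)\, v^2$ and take the coefficients of $s^2$, $st$ and $t^2$. The condition $L \subset V$ then says exactly that
\[
\sum_v c_v(q)\, a_v^2 = \sum_v c_v(q)\, a_v b_v = \sum_v c_v(q)\, b_v^2 = 0 \quad \text{for all } q \in W.
\]
Equivalently, the three coordinatewise products $a\circ a$, $a \circ b$ and $b\circ b$ all lie in the three-dimensional space $W^\perp$ of $3\times 3$ magic squares. Writing $W^\perp$ in the Lucas coordinates $(x,y,z)$, this becomes $18$ equations. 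Each is a quadratic polynomial in the cell coordinates together with three auxiliary linear parameters for each of the three products. Eliminating the auxiliary parameters leaves $18$ equations in at most $14$ unknowns, in agreement with the expected dimension count. The Gröbner bases are then computed chart by chart in Magma over $\Q$.

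To cut down the work, I would use the group $G$ of Theorem~\ref{thm:main3}. Sign changes preserve each cell, since they only rescale coordinates. The $D_4$-action permutes the coordinates, and hence the pivot pairs, so together with a compatible reordering of the coordinates it identifies some of the charts with each other. The small cells, where $a$ and $b$ have many forced zeros, can probably be done by hand. For instance, if $a$ or $b$ has a zero entry $v$, the point of $L$ where that coordinate vanishes lies on one of the coordinate hyperplane sections described in Section~\ref{subsec:coordinate_hyperplane_sections}, which constrains the configuration. I also expect an easy hand argument to exclude lines lying in the $256$-point singular or trivial orbits.

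The main obstacle will be the big cell, with pivots $(1,2)$, where all $14$ free coordinates are present; the Gröbner basis computation there may be slow. My first attempt would be to split it further by whether each product $a_v b_v$ vanishes, or by fixing the Lucas coordinates of $a\circ a$ up to scaling. Failing that, I would first compute modulo several primes to detect the unit ideal quickly. A mod-$p$ computation only suggests the answer, though: the unit-ideal certificate still has to be produced over $\Q$, so that it establishes emptiness over $\Qbar$.
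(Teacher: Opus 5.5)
Your plan is essentially the paper's proof: stratify $\mathbb{G}(1,8)$ into the $36$ Schubert cells of the standard flag, parametrize each cell by an affine chart (the open cell being $\mathbb{A}^{14}$), impose the $18$ coefficient equations in $s^2, st, t^2$, and check by Gr\"obner bases that each ideal contains $1$; your condition $a\circ a,\ a\circ b,\ b\circ b\in W^{\perp}$ is just a repackaging of exactly these equations. Two remarks on your optional shortcuts: the $D_4$ coordinate permutations never carry one Schubert cell of a fixed flag to a different one, so to use that symmetry you would have to switch to the open cover by the charts $\{p_{ij}\neq 0\}$, and since $F(V)$ spreads out to a scheme proper over $\mathbb{Z}$, a successful unit-ideal check on all cells modulo a single prime would already prove $F(V)=\varnothing$ over $\Qbar$.
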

 
\begin{proof}
The idea is to stratify $\mathbb{G}(1,8)$ by Schubert cells, parametrize lines in each cell by an explicit affine chart, and verify on each chart that the ideal cut out by the six quadrics has no solutions using Gr\"obner bases. Fix the standard complete flag $\mathbb{P}^0 \subseteq \mathbb{P}^1 \subseteq \cdots \subseteq \mathbb{P}^8$ in $\mathbb{P}^8$. The Schubert cells of $\mathbb{G}(1,8)$ are indexed by partitions $\sigma_{a_1, a_2}= (a_1, a_2)$ with $7 \geq a_1 \geq a_2 \geq 0$; there are $36$ such cells with respect to our given flag. The open cell $\sigma_{0,0}$ parametrizes lines avoiding $\mathbb{P}^6 = \mathrm{span}(e_2, \dots, e_8)$ and is isomorphic to $\mathbb{A}^{14}$. The closed cells have lower dimension.
 
On the open cell $\sigma_{0,0}$, a line is uniquely written as the row span of a matrix
\[
\begin{pmatrix}
1 & 0 & x_0 & x_1 & x_2 & x_3 & x_4 & x_5 & x_6 \\
0 & 1 & y_0 & y_1 & y_2 & y_3 & y_4 & y_5 & y_6
\end{pmatrix},
\]
giving an isomorphism $\sigma_{0,0} \cong \mathbb{A}^{14}$. A line of this form lies on $V$ if and only if each defining quadric $Q_i$ vanishes on the parametrized family 
\[
[s : t : s x_0 + t y_0 : \cdots : s x_6 + t y_6]
\]
for all $[s:t] \in \mathbb{P}^1$. Writing $Q_i(s,t)$ as a homogeneous polynomial of degree $2$ in $(s,t)$ with coefficients in $\mathbb{Z}[x_0, \ldots, x_6, y_0, \ldots, y_6]$, we obtain three equations per quadric (the coefficients of $s^2$, $st$, $t^2$), for a total of $18$ equations in $14$ variables, defining the restriction $F(V) \cap \sigma_{0,0}$ as a subscheme of $\mathbb{A}^{14}$. A Gröbner basis computation in Magma~\cite{bosma1997magma} shows that this ideal
contains $1$, so $F(V) \cap \sigma_{0,0} = \varnothing$. Analogous charts and calculations cover each of the remaining $35$ Schubert cells. Since $F(V) \cap \sigma = \varnothing$ for every Schubert cell $\sigma$, we conclude $F(V) = \varnothing$, so $V$ contains no lines. The full Magma code, including the explicit affine charts on each Schubert cell and the Gröbner basis verifications, is available at \cite{singer_2026}.
\end{proof}
 
\begin{remark} This trivially implies that no lines meet the distinct locus $U$. We record it because the question of low-degree rational curves on $U$ --- rather than on all of $V$ --- is what is directly relevant to the Diophantine problem. By contrast, $V$ contains many conics. The 128 rational conics arising from the 3-distinct-entry locus $Z_3$ in Section~\ref{subsec:nondistinct} are all contained in the closed locus $Z_3 \subset V \smallsetminus U$.  We leave open the question of whether these are all the conics contained in $V$; we know there are finitely many by \cite{bruin_2022_explicit}.
\end{remark}

\bibliographystyle{amsplain}
\bibliography{Squares.bib}

\end{document}